\documentclass[a4paper,reqno]{amsart}

\usepackage[utf8]{inputenc}
\usepackage[english]{babel} 

\usepackage{amsmath,amssymb,amsfonts,amsthm}
\usepackage{mathrsfs, esint, dsfont}
\usepackage{moreverb,rotating,graphics,float, subcaption}
\usepackage[colorlinks, linkcolor={blue!50!blue}, citecolor={red}]{hyperref}
\usepackage{framed,fancybox}
\usepackage{enumerate}
\usepackage{csquotes}
\usepackage{float}
\usepackage{todonotes}


\newtheorem{theorem}{Theorem}
\newtheorem{proposition}[theorem]{Proposition}
\newtheorem{remark}[theorem]{Remark}
\newtheorem{lemma}[theorem]{Lemma}
\newtheorem{corollary}[theorem]{Corollary}
\newtheorem{definition}[theorem]{Definition}

\newtheorem{conjecture}[theorem]{Conjecture}


\newcommand\1{{\mathds 1}}

\def\C{{\mathbb C}}

\def\bbI{{\mathbb I}}

\def\N{{\mathbb N}}

\def\R{{\mathbb R}}

\def\SS{{\mathbb S}}

\def\Z{{\mathbb Z}}



\def\rd{{\mathrm{d}}}
\def\re{{\mathrm{e}}}
\def\ri{{\mathrm{i}}}


\def\cE{{\mathcal E}}
\def\cF{{\mathcal F}}

\def\cN{{\mathcal N}}

\def\cR{{\mathcal R}}


\def\gS{{\mathfrak{S}}}


\newcommand{\eps}{\epsilon}

\newcommand{\ii}{\infty}
\newcommand{\dps}{\displaystyle}
\newcommand{\nn}{\nonumber}
\newcommand{\Tr}{{\rm Tr}}
\newcommand{\VTr}{\underline{\rm Tr}}
\newcommand{\tr}{{\rm Tr}}

\newcommand{\Lat}{\cR}
\newcommand{\RLat}{{\cR^*}}
\newcommand{\WS}{\mathbb{K}}

\newcommand{\bra}{\langle}
\newcommand{\ket}{\rangle}

\renewcommand{\epsilon}{\varepsilon}
\newcommand\pscal[1]{{\ensuremath{\left\langle #1 \right\rangle}}}
\newcommand{\norm}[1]{ \left| \! \left| #1 \right| \! \right| }


\author{David Gontier}
\address{CEREMADE, University of Paris-Dauphine, PSL University, 75016 Paris, France}
\email{gontier@ceremade.dauphine.fr}

\author{Mathieu Lewin}
\address{CNRS and CEREMADE, University of Paris-Dauphine, PSL University, 75016 Paris, France}
\email{mathieu.lewin@math.cnrs.fr}

\author{Faizan Q. Nazar}
\address{CEREMADE, University of Paris-Dauphine, PSL University, 75016 Paris, France}
\email{nazar@ceremade.dauphine.fr}

\title[NLS equation for orthonormal functions I]{The nonlinear Schr\"odinger equation for orthonormal functions:\\ I. Existence of ground states}

\date{\today}

\begin{document}

\begin{abstract}
We study the nonlinear Schr\"odinger equation for systems of $N$ orthonormal functions. We prove the existence of ground states for all $N$ when the exponent $p$ of the non linearity is not too large, and for an infinite sequence $N_j$ tending to infinity in the whole range of possible $p$'s, in dimensions $d\geq1$. This allows us to prove that translational symmetry is broken for a quantum crystal in the Kohn-Sham model with a large Dirac exchange constant. 

\bigskip

\noindent \sl \copyright~2020 by the authors. This paper may be reproduced, in its entirety, for non-commercial purposes.
\end{abstract}

\maketitle

\section{Introduction and statement of the main results}

\subsection{Ground states for the nonlinear Schr\"odinger equation}

The \emph{nonlinear Schr\"odinger equation} (NLS) is one of the most famous nonlinear partial differential equation and it naturally occurs in a variety of physical situations~\cite{Malomed-05}, including Bose-Einstein condensation~\cite{PitStr-03, LieSeiSolYng-05}, nonlinear optics~\cite{ZakSha-72,Manakov-74}, water waves~\cite{Zakharov-68}, Langmuir waves in plasmas~\cite{TanYaj-69,FriIch-73} and many others. We quickly recall here some of its mathematical properties before turning to its generalisation to systems of orthonormal functions, which is appropriate for quantum mechanical systems. 

An \emph{NLS ground state} is by definition a normalised positive solution $Q\in L^2(\R^d,\R_+)$ to the stationary focusing NLS equation in $\R^d$:
\begin{equation}
\left(-\Delta-Q^{2p-2}\right)Q=\mu\,Q, \qquad
\int_{\R^d} Q(x)^2\rd x = 1,\qquad \mu<0.
\label{eq:NLS}
\end{equation}
In this formulation, the Lagrange multiplier $\mu$ is unknown and must be adjusted to fulfill the normalisation constraint in $L^2(\R^d)$. For
\begin{equation}
1<p<\begin{cases}\ii & \text{in dimensions $d=1,2$,}\\
\dps\frac{d}{d-2} & \text{in dimensions $d\geq3$.}
     \end{cases}
     \label{eq:intro_constraint_p_subcritical}
\end{equation}
it is known~\cite{Coffman-72,Kwong-89,McLeod-93,Tao-06,Frank-13} that~\eqref{eq:NLS} admits a \emph{unique} solution $(Q,\mu)$, up to space translations for the function $Q$. In fact, $Q$ is a certain dilation of the unique solution to the equation with $\mu=-1$ and without the mass constraint. If $d\geq3$ and $p \ge \frac{d}{d-2}$, no solution with $\mu<0$ can exist~\cite{Pohozaev-65,BerLio-83}. At the critical exponent $p = \frac{d}{d-2}$ there is also a unique solution of the equation but it has $\mu=0$ (which results in an additional invariance under dilations) and it belongs to $L^2(\R^d)$ only in dimensions $d\geq5$. When $p$ is as in~\eqref{eq:intro_constraint_p_subcritical}, the unique solution is non-degenerate~\cite{Weinstein-85}, which plays an important role for the behaviour of the associated time-dependent Schr\"odinger equation
$$i\frac\partial{\partial t}u=\left(-\Delta-|u|^{2p-2}\right)u,$$
of which $Q$ is a stationary state. Since $Q>0$ with $\int_{\R^d} Q(x)^2 \, \rd x = 1$, the Lagrange multiplier $\mu$ must necessarily be the lowest eigenvalue of the operator $-\Delta-Q^{2p-2}$, see~\cite[Cor.~11.9]{LieLos-01} and~\cite[Sec.~12]{ReeSim4}. 
By scaling we find that the (unique) solution $Q_\lambda$ satisfying $\int_{\R^d}Q_\lambda(x)^2\,\rd x=\lambda$ solves the same equation~\eqref{eq:NLS} with $\mu$ replaced by 
\begin{equation}
\mu(\lambda)=\mu\,\lambda^{\frac2d \frac{p-1}{1+\frac2d-p}}.
\label{eq:formula_mu_lambda}
\end{equation}
Under the additional condition
$$1<p<1+\frac{2}d,$$
it is useful to introduce the associated NLS functional 
\begin{equation}
 \cE(u):=\int_{\R^d}|\nabla u(x)|^2\,\rd x-\frac1p\int_{\R^d}|u(x)|^{2p}\,\rd x.
 \label{eq:NLS_E}
\end{equation}
Then $Q_\lambda$ is the unique solution (modulo phases and space translations) to the minimisation problem
\begin{equation}
\boxed{ I(d, p, \lambda) :=\min \left\{ \cE(u), \quad u\in H^1(\R^d), \ \int_{\R^d}|u|^2=\lambda \right\}. }
\label{eq:I_lambda}
\end{equation}
This explains the denomination {\em ground state}. When the values of $d$ and $p$ are clear from the context, we will omit them in our notation and write
$$I(\lambda) := I(d, p, \lambda).$$
When $1 < p < 1 + \frac2d$, the Gagliardo-Nirenberg inequality implies that $I(\lambda)$ is finite. For larger $p$'s one has to optimise a different functional~\cite{Weinstein-83}. 

By scaling one finds that 
\begin{equation}
 I(\lambda)=I(1)\,\lambda^{1+\frac{2}{d}\frac{p-1}{1+\frac2d -p}}.
 \label{eq:formula_I_lambda}
\end{equation}
Since the exponent is greater than $1$ with $I(1)<0$, this implies immediately that $\lambda\mapsto I(\lambda)$ is strictly concave over $\R_+$ and that 
\begin{equation}
 I(\lambda)<I(\lambda-\lambda')+I(\lambda'),\qquad \forall 0<\lambda'<\lambda.
 \label{eq:binding_NLS}
\end{equation}
These so-called \emph{binding inequalities} guarantee the existence of a minimiser and the compactness of all the minimising sequences up to translations, by the concentration-compactness method~\cite{Lions-82a,Lions-84,Lions-84b,Lewin-conc-comp}.

\subsection{The case of orthonormal functions: main results}
When studying fermions, like electrons, neutrons or protons, one is naturally led to deal with systems of orthonormal functions~\cite{LieSei-09}, that is, $u_1,...,u_N\in L^2(\R^d,\C)$ with $\pscal{u_j,u_k}=\delta_{jk}$. In this paper we study the nonlinear Schr\"odinger equation for such orthonormal systems, which could be also called the \emph{fermionic NLS equation}. It takes the form
\begin{equation}
\left(-\Delta-\bigg(\sum_{j=1}^N|u_j|^2\bigg)^{p-1}\right)u_i=\mu_i\,u_i,\qquad i=1,...,N.
\label{eq:fermionic_NLS}
\end{equation}
This is a system of $N$ coupled partial differential equations, where the coupling involves only the \emph{density}
$$\rho(x)=\sum_{j=1}^N|u_j(x)|^2$$
of the $N$ particles. Compared to~\eqref{eq:NLS} and~\eqref{eq:formula_mu_lambda}, we see that $\rho$ plays the role of $Q^2$, so $N = \int_\R \rho$ plays here the role of $\lambda$. In what follows, we use the notation $\lambda = N \in \N$ if it is integer valued.
Equation~\eqref{eq:fermionic_NLS} has already been studied in the mathematical~\cite{LinWei-05,AmbCol-06,BarWan-06,BarWanWei-07} and physical~\cite{Manakov-74,KevFra-16, ZhaYan-18} literature, but the constraint $\pscal{u_j,u_k}=\delta_{jk}$ is often not assumed. Dealing with this constraint is the main goal of our work. 

Equation~\eqref{eq:fermionic_NLS} has several invariances. The first is the invariance under the non-compact group of space translations and it corresponds to replacing all the functions $u_j(x)$ by $u_j(x-\tau)$ for some $\tau\in\R^d$. The second is due to rotations with $u_j(\cR x)$ and $\cR\in{\rm SO}(d)$. On the other hand, the multiplication by a phase for $N=1$ is here replaced by the action of the (compact) group $U(N)$ of space-independent $N\times N$ unitary matrices, in the manner
$$U\in U(N)\mapsto U\cdot(u_1,...,u_N)=\left(\sum_{k=1}^NU_{kj}u_k\right)_{j=1}^N.$$
This action does not affect the orthonormality constraint, nor the density $\rho$, hence it preserves the mean-field operator $-\Delta-\rho^{p-1}$. However it has the effect of transforming the diagonal matrix $\underline\mu={\rm diag}(\mu_1,...,\mu_N)$ of Lagrange multipliers into $U\underline\mu U^*$. Although it could seem more appropriate to start with a general hermitian matrix of multipliers $(\mu_{jk})_{1\leq j,k\leq N}$ associated with the constraints $\pscal{u_j,u_k}=\delta_{jk}$, we have for convenience chosen in~\eqref{eq:fermionic_NLS} a gauge in which this matrix is diagonal. 

Next, we introduce the concept of ground states for~\eqref{eq:fermionic_NLS}. We ask that the Lagrange multipliers $\mu_i$ are the $N$ first eigenvalues of the corresponding operator $-\Delta-\rho^{p-1}$. We always use the convention that the eigenvalues are ordered increasingly and are repeated in case of degeneracies. 

\begin{definition}[Ground state]\label{def:ground_state}
A \emph{ground state} is a system $(u_1,...,u_N)\in H^1(\R^d)$, orthonormal in $L^2(\R^d)$, which solves the equations~\eqref{eq:fermionic_NLS} where 
$$\mu_1<\mu_2\leq\cdots\leq\mu_N\leq0$$
are the \emph{$N$ first eigenvalues of the operator $-\Delta-\rho^{p-1}$}. 
\end{definition}

This definition coincides with the (strict) positivity of $Q$ in the case $N=1$, since the first eigenvalue $\mu_1$ is always non-degenerate with a positive eigenfunction, when it exists. Our definition for $N\geq2$ is further discussed in light of the $N$-particle problem in Remark~\ref{rmk:N-particle} below. 

It is interesting to determine the values of $p$ and $N$ (depending on the dimension $d\geq1$) for which ground states exist. In this article, we focus on the case $1 < p < 1 + \frac2d$, which can be recast into a minimisation problem. Some kind of ground states have recently been constructed in~\cite{HonKwoYoo-19} for $p>1+\frac2d$ but their corresponding density $\rho\in L^p(\R^d)$ is not necessarily in $L^1(\R^d)$, so $N = \int_{\R^d} \rho$ may in fact be infinite. The critical case $p=1+\frac2d$ will be handled in the second part~\cite{FraGonLew-20_ppt} of this work, in dimensions $d\geq3$.

A more difficult question is that of the uniqueness of ground states, when they exist. We believe that in dimension $d = 1$, ground states are always unique up to translations whenever they exist. Numerical simulations in dimension $d=2$ presented later in Figure~\ref{fig:2d} suggest that the system may break rotational symmetry, in which case minimisers are not unique modulo space translations. 

From now on, we assume
$$\boxed{1<p<1+\frac{2}{d}.}$$
As in the $N = 1$ case, ground states naturally occur as minimisers of the associated nonlinear functional
\begin{equation}
\cE(u_1,...,u_N)=\sum_{i=1}^N\int_{\R^d}|\nabla u_i(x)|^2\,\rd x-\frac1p\int_{\R^d}\left(\sum_{i=1}^N|u_i(x)|^2\right)^p\,\rd x.
\label{eq:def_energy_u_j}
\end{equation}
This paper is devoted to the study of the associated minimisation problem
\begin{equation}
\boxed{J(N)=\inf \Big\{ \cE(u_1,...,u_N), \quad u_1,...,u_N\in H^1(\R^d,\C), \  \pscal{u_j,u_k}_{L^2}=\delta_{jk}\Big\}.}
\label{eq:def_J_integers}
\end{equation}
For $N = 1$, we recover $J(1)=I(1)$. 
Unfortunately, there is no simple formula such as~\eqref{eq:formula_I_lambda} for $J(N)$. This is because of the orthonormality constraint, which prevents us from multiplying $u_i$ by a positive constant, as one does for $I(N)$ to obtain~\eqref{eq:formula_I_lambda}. 

The goal of the present article is to prove that $J(N)$ admits minimisers, for some values of $p$ and $N$. Following~\cite{Lewin-11} and as is usual in the study of nonlinear elliptic minimisation problems, our main strategy is to prove the so-called {\em binding inequalities}. Indeed, as we recall in Theorem~\ref{th:Lewin} below, if $N \in \N$ is such that the following binding inequalities hold:
\begin{equation} \label{eq:condition_binding_conjecture}
     J(N) < J(N-K)+J(K),\qquad \text{for all}\quad K=1,...,N-1,
\end{equation}
then $J(N)$ has a minimiser and all the minimising sequences are compact, up to translations.  We prove later in Proposition~\ref{lem:Euler-Lagrange} that minimisers of $J(N)$ are indeed ground states. Therefore, the problem boils down to proving the binding inequalities~\eqref{eq:condition_binding_conjecture}. We believe that the following holds.

\begin{conjecture}[Binding]\label{conj:binding}
For every $N\geq2$ and every
\begin{equation}
1<p<\min\left(2,1+\frac{2}{d}\right),
\label{eq:cond_p_conjecture}
\end{equation}
the binding inequalities~\eqref{eq:condition_binding_conjecture} hold.
In particular, $J(N)$ admits a minimiser, which is a ground state for~\eqref{eq:fermionic_NLS}.
\end{conjecture}

Let us emphasise the new condition $p<2$. The critical exponent $p = 2$ appears naturally in our proof, when we evaluate the interaction between two ground states placed far away. But we will also explain in Theorem~\ref{thm:p=2} below that binding does \emph{not} occur at $p=2$ in dimension $d=1$, so that the condition~\eqref{eq:cond_p_conjecture} is in fact optimal. 

We were not able to prove Conjecture~\ref{conj:binding} in the whole range of parameters. We prove in this paper two weaker results. The first one is that the conjecture holds for $p$ close enough to~$1$.

\begin{theorem}[Binding and existence of ground states for small $p$]\label{thm:ground_states_orthonormal_small_p}
For all $d \ge 1$, there exists $1<p_c(d) \le 1+\frac2d$ such that, for all 
$$1 < p < p_c(d),$$ 
the binding inequalities~\eqref{eq:condition_binding_conjecture} hold for all $N\geq2$. In particular, for all $1 < p < p_c(d)$ and all $N \ge 2$ there exists a minimiser $(u_1,...,u_N)$ for $J(N)$, which is a ground state. It solves the nonlinear system~\eqref{eq:fermionic_NLS} where the corresponding multipliers satisfy
$$\mu_1<\mu_2\leq\mu_3\leq\cdots\leq\mu_N<0$$ 
and are the $N$ first eigenvalues of the Schr\"odinger operator $-\Delta-(\sum_{i=1}^N|u_i|^2)^{p-1}$, counted with multiplicity.
\end{theorem}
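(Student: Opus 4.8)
The plan is to argue by induction on $N$ and to establish the binding inequalities~\eqref{eq:condition_binding_conjecture}: once these are known, the existence of a minimiser for $J(N)$ follows from Theorem~\ref{th:Lewin}, the fact that it is a ground state — in particular the Euler--Lagrange equations~\eqref{eq:fermionic_NLS}, the non-degeneracy $\mu_1<\mu_2$ coming from Perron--Frobenius, and $\mu_N\leq0$ — from Proposition~\ref{lem:Euler-Lagrange}, and the strict bound $\mu_N<0$ from a short separate argument ruling out a zero-energy top state. For $N=1$ the conclusion holds, $J(1)=I(1)$ being attained by the NLS ground state $Q$ with a single multiplier $\mu_1<0$. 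So fix $N\geq2$, assume the conclusion of the theorem for every $1\leq m<N$, and fix $K\in\{1,\dots,N-1\}$. The large subadditivity $J(N)\leq J(K)+J(N-K)$ is standard — translate two trial configurations far apart and use $(\rho_1+\rho_2)^p\geq\rho_1^p+\rho_2^p$ — so everything amounts to upgrading it to the \emph{strict} inequality.

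The trial state for $J(N)$ would be built from genuine minimisers $(u_1,\dots,u_K)$ of $J(K)$ and $(w_1,\dots,w_{N-K})$ of $J(N-K)$, which exist by the induction hypothesis: translate the second family so that its centre lies at distance $R$ from the first, and restore orthonormality of the resulting $N$ functions by the symmetric procedure $(\mathrm I+S)^{-1/2}$, where $S$ is the block-off-diagonal Gram matrix of mutual overlaps. Writing $\rho_1,\rho_2$ for the two densities and $\rho_2^R$ for the translated one, a direct expansion gives
\begin{equation*}
J(N)\ \leq\ J(K)+J(N-K)\ -\ \tfrac1p\,W(R)\ +\ \mathrm{Err}(R),\qquad W(R):=\int_{\R^d}\!\big[(\rho_1+\rho_2^R)^p-\rho_1^p-(\rho_2^R)^p\big],
\end{equation*}
where $W(R)>0$ because $p>1$, and $\mathrm{Err}(R)$ records the change of the kinetic and potential energies produced by $(\mathrm I+S)^{-1/2}$.

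The core of the matter is then a competition of decay rates. Each of the two minimisers has a strictly negative top multiplier, so Agmon estimates give exponential decay, with the slowest rates governed by $\kappa_1:=\sqrt{|\mu_K^{(1)}|}$ and $\kappa_2:=\sqrt{|\mu_{N-K}^{(2)}|}$. On the cost side, the overlaps in $S$ and the off-diagonal kinetic couplings are $O(e^{-(1-\eta)\min(\kappa_1,\kappa_2)R})$ and $\mathrm{Err}(R)$ is quadratic in them, so $\mathrm{Err}(R)=O(e^{-2(1-\eta)\min(\kappa_1,\kappa_2)R})$. On the gain side, two-sided decay bounds for $\rho_1,\rho_2$ show that the dominant contribution to $W(R)$ comes from the ``crossover'' region $\{\rho_1\sim\rho_2\}$, where the integrand equals $(2^p-2)\rho_1^p$, and yield $W(R)\gtrsim e^{-2(1+\eta)p\,\bar\kappa R}$ with $\bar\kappa:=\kappa_1\kappa_2/(\kappa_1+\kappa_2)$. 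Since $\bar\kappa<\min(\kappa_1,\kappa_2)$ always, the gain beats the cost for $R$ large provided $p\,\bar\kappa<\min(\kappa_1,\kappa_2)$, i.e. (taking $\eta$ small enough)
\begin{equation*}
p-1\ <\ \frac{\min(\kappa_1,\kappa_2)}{\max(\kappa_1,\kappa_2)} .
\end{equation*}
For each fixed $K$ this holds once $p$ is close enough to $1$; choosing $R$ large then gives $J(N)<J(K)+J(N-K)$, which closes the induction.

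I expect the genuine difficulty to be the uniformity of this in $N$: for $p_c(d)$ to be independent of $N$ one needs an a priori bound ensuring that the ratios $\sqrt{|\mu_m^{\mathrm{top}}|}/\sqrt{|\mu_{m'}^{\mathrm{top}}|}$ of top multipliers of minimisers remain in a fixed compact subset of $(0,\infty)$, uniformly over all $m,m'$ — equivalently, using the scaling covariance of the problem, that the profiles of the minimisers do not degenerate as $N\to\infty$. It is convenient to record the scaling identity $J(N)=c(p,d)\,G(N)$ with $c(p,d)<0$, where $G(N)$ is the sharp constant in a Gagliardo--Nirenberg--Lieb--Thirring-type inequality for $N$ orthonormal functions, namely the supremum of $\big(\int\rho^p\big)^{a}\big(\sum_j\int|\nabla u_j|^2\big)^{-b}$ over orthonormal families, with $a,b>0$ depending only on $d,p$. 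The binding inequalities then amount to the strict superadditivity $G(N)>G(K)+G(N-K)$, and the expansion $G(N)=N+(p-1)\,g(N)+o(p-1)$ as $p\to1^+$ — where $g(N)$ is governed by $\int\rho\log\rho$ and $\log\sum_j\int|\nabla u_j|^2$, i.e. by an entropy / logarithmic-Sobolev functional — makes the small-$p$ mechanism transparent. The remaining work is to control the $o(p-1)$ error uniformly in $N$, along with the supporting technical points: exponential decay of minimisers with the expected rate, two-sided bounds on the density, and the second-order estimate for $\mathrm{Err}(R)$.
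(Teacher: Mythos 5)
Your binding mechanism is essentially the paper's Proposition~\ref{lem:strongBinding}: two minimisers translated a distance $R$ apart, orthonormalised via the inverse square root of the Gram matrix, with the nonlinear gain $\int\big[(\rho+\rho'_R)^p-\rho^p-(\rho'_R)^p\big]\gtrsim \re^{-2p\frac{\eps\eps'}{\eps+\eps'}R}$ beating the $O(\re^{-2\eps' R})$ orthonormalisation cost exactly under the condition $p-1<\min(\eps,\eps')/\max(\eps,\eps')$ on the top multipliers. Up to that point the proposal and the paper coincide (modulo a technical caveat: pointwise two-sided decay bounds on $\rho$ are not available because of nodal sets; the paper's Lemma~\ref{lem:exp_decay_minimiser} only bounds the spherical average $[\rho]_1$ from below, and the proof needs the extra step of combining this with the gradient upper bound and a rotation of the trial states to get a lower bound on a fixed small ball).

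The genuine gap is precisely the step you flag as ``the genuine difficulty'' and then leave unresolved: uniformity in $N$. As written, your induction only produces, for each $N$ and $K$, a threshold depending on the ratio $\kappa_1/\kappa_2$ of the top multipliers of the two minimisers; without an $N$-independent confinement of these ratios the admissible range of $p$ could shrink to $\{1\}$ as $N\to\infty$, and no single $p_c(d)$ valid for all $N\ge2$ emerges --- which is the entire content of the theorem. The paper closes this with two elementary universal bounds on the last filled multiplier, recorded in~\eqref{eq:estim_mu_N}: the upper bound $\mu_N\le J(1)=I(1)<0$, obtained by removing mass from the top orbital (inequality~\eqref{eq:estim_derivative_one_side}) together with subadditivity and the explicit scaling formula~\eqref{eq:formula_I_lambda}; and the lower bound $\mu_N\ge \frac{2p-d(p-1)}{2-d(p-1)}\frac{J(N)}{N}\ge \frac{2p-d(p-1)}{2-d(p-1)}\,e_{\rm LT}(d,p)$, obtained from the virial identity $\tr(-\Delta\gamma)=\frac{d(p-1)}{2p}\int_{\R^d}\rho_\gamma^p$ combined with the Lieb--Thirring bound~\eqref{eq:energy_TF_LT}. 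These confine $\sqrt{\min\{|\mu|,|\mu'|\}/\max\{|\mu|,|\mu'|\}}$ to an explicit $N$-independent interval, so that $p_c(d)$ can be defined as the first zero of $p\mapsto 1+\sqrt{|I(d,p,1)|/|e_{\rm LT}(d,p)|}\,\sqrt{(2-d(p-1))/(2p-d(p-1))}-p$, giving in particular the numerical values~\eqref{eq:values_p_c_d}. Your proposed substitute --- an expansion $G(N)=N+(p-1)g(N)+o(p-1)$ of a Gagliardo--Nirenberg--Lieb--Thirring-type sharp constant with an entropy/log-Sobolev correction, and a uniform-in-$N$ control of the $o(p-1)$ error --- is only a heuristic sketch, is not carried out, and in particular does not deliver the required uniform two-sided control of the multipliers; so the proposal as it stands does not prove the theorem.
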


In the proof in Section~\ref{sec:proof_existence_small_p} we give an explicit lower bound of the critical exponent $p_c(d)$. This lower bound depends solely on the NLS solution $Q$ of Equation~\eqref{eq:NLS}. It can be numerically computed with very high accuracy using Runge-Kutta numerical methods, since $Q$ is radial hence solves a second order Ordinary Differential Equation. We find
\begin{equation}
p_c(d)>\begin{cases}
  1.614& \text{for $d=1$,}\\
  1.530& \text{for $d=2$,}\\
  1.444& \text{for $d=3$.}\\
  \end{cases} 
 \label{eq:values_p_c_d}
\end{equation}
In particular, we are able to cover the important case $p=4/3$ in dimension $d=3$, which is the object of Section~\ref{sec:rHFD}. These bounds show that the critical $p_c(d)$ is not so close to 1 and let us think that $p_c(d)=\min(2,1+\frac2d)$ should hold. See Remark~\ref{rmk:comput_p_c} below for more comments on $p_c(d)$.

Our second result can cover the whole range $1 < p < \min (2, 1 + \frac2d)$, but is valid only for an infinite sequence $N_j\to\ii$, including the two-particle case $N=2$. 

\begin{theorem}[Binding and existence of ground states for all $p$]\label{thm:ground_states_orthonormal_all_p}
Let $d\geq1$ and
$$1<p<\min\left(2\;,\; 1+\frac{2}{d}\right).$$
There exists an infinite increasing sequence of integers 
$$N_1=1,\ N_2=2< N_3<\cdots < N_j < \cdots $$ 
for which the binding inequalities~\eqref{eq:condition_binding_conjecture} hold. For any such $N=N_j$, $J(N)$ has a minimiser $(u_1,...,u_N)$ which satisfies the same properties as in Theorem~\ref{thm:ground_states_orthonormal_small_p}. 
\end{theorem}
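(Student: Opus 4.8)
The plan is to reduce the statement to the claim that the set
$$\cB:=\{1\}\cup\{N\ge2:\ \text{the binding inequalities \eqref{eq:condition_binding_conjecture} hold at }N\}$$
is infinite. Indeed, for each $N\in\cB$ with $N\ge2$, Theorem~\ref{th:Lewin} produces a minimiser of $J(N)$, and Proposition~\ref{lem:Euler-Lagrange} then shows it is a ground state with the stated spectral properties; listing $\cB=\{1=N_1<N_2<\cdots\}$ gives the desired sequence, and it remains only to check separately that $2\in\cB$ so that $N_2=2$.

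\emph{Step 1 (subadditivity).} First I would prove $J(a+b)\le J(a)+J(b)$ for all integers $a,b\ge1$. Given $\eps>0$, choose orthonormal near-minimisers $(u_1,\dots,u_a)$ for $J(a)$ and $(v_1,\dots,v_b)$ for $J(b)$; translating the $v_j$'s by a large vector $\tau$ and performing, if needed, a Gram--Schmidt correction of size $o(1)$ as $|\tau|\to\infty$, one obtains $a+b$ orthonormal functions with density $\rho_u+\rho_v(\cdot-\tau)$. The kinetic term is translation invariant and $\int(\rho_u+\rho_v(\cdot-\tau))^p\to\int\rho_u^p+\int\rho_v^p$ as the cross terms decay, whence $J(a+b)\le J(a)+J(b)+2\eps$; let $\eps\to0$.

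\emph{Step 2 (decomposition).} By strong induction: if binding fails at some $N\ge2$, then since $J(N)\le J(K)+J(N-K)$ always (Step 1), there is $K\in\{1,\dots,N-1\}$ with $J(N)=J(K)+J(N-K)$; recursing on $K$ and $N-K$ shows every $N$ can be written $N=n_1+\dots+n_m$ with all $n_i\in\cB$ and $J(N)=\sum_i J(n_i)$. Hence, if $\cB$ were finite, then for every $N$
$$\frac{J(N)}{N}=\frac{\sum_i J(n_i)}{\sum_i n_i}\ \ge\ \min_{b\in\cB}\frac{J(b)}{b}=:L,$$
being a weighted average of the finitely many ratios $J(b)/b$, and this minimum is attained at some $N_0\in\cB$, i.e. $J(N_0)=N_0L$.

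\emph{Step 3 (the strict doubling inequality --- the crux).} The core of the argument is
\begin{equation}\label{eq:doubling}
J(2N)<2J(N)\qquad\text{for every }N\ge1 .
\end{equation}
Granting \eqref{eq:doubling}, the proof finishes: if $\cB$ were finite, Step 2 would force $J(2N_0)\ge 2N_0L=2J(N_0)$, contradicting \eqref{eq:doubling} at $N_0$; so $\cB$ is infinite. Also \eqref{eq:doubling} at $N=1$ reads $J(2)<2J(1)$, i.e. $2\in\cB$. To prove \eqref{eq:doubling} I would take an orthonormal near-minimiser $(u_1,\dots,u_N)$ of $J(N)$ with density $\rho$ and build a $2N$-particle trial state from two spatially overlapping copies of it, orthogonalised by a momentum boost: use the $N$ functions $\mu^{d/2}u_j(\mu\,\cdot)$ together with the $N$ functions $\mu^{d/2}u_j(\mu\,\cdot)\,e^{i\mathbf k\cdot x}$, with $|\mathbf k|/\mu$ large enough to almost orthogonalise the two families and a final $o(1)$ Gram--Schmidt correction. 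Its density is $2\mu^d\rho(\mu\,\cdot)$, so the potential energy is $-\tfrac1p\,2^p\mu^{d(p-1)}\!\int\rho^p$, strictly more negative --- by the factor $2^p>2$ coming from the superadditivity of $\rho\mapsto\rho^p$ --- than for two separated copies, while the kinetic energy is $2\mu^2\!\int|\nabla u|^2+|\mathbf k|^2N$ plus the correction. Optimising $\mu$ and $|\mathbf k|$, and using $d(p-1)<2$ (valid since $p<1+\tfrac2d$), the potential gain should outweigh the extra $|\mathbf k|^2N$; balancing these parameters is delicate and is where the restriction $p<\min(2,1+\tfrac2d)$ and the real work enter. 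An equivalent route is a Thomas--Fermi-type analysis of $J(N)$ showing that its sub-leading (surface) term is strictly positive, so that $\lim_N J(N)/N$ is never attained --- which is exactly what rules out a finite $\cB$ in Step 2. Either way, \emph{this is the main obstacle}: the combinatorics of Steps~1--2 is routine, but establishing the strict doubling inequality (``$2N$ fermions are strictly cheaper to pack than two packets of $N$'') requires a genuine trial-state construction or asymptotic estimate weighing the superadditivity of the nonlinearity against the kinetic cost of re-orthogonalisation. Once $\cB$ is known infinite, Step~4 is immediate: take $N_2=2$ by \eqref{eq:doubling}, and for each $N_j$ apply Theorem~\ref{th:Lewin} and Proposition~\ref{lem:Euler-Lagrange} to obtain the ground state and the eigenvalue statement.
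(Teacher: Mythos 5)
Your combinatorial skeleton (Steps 1--2) is essentially the paper's argument: decompose any $N$ for which binding fails into integers where it holds, and derive a contradiction from a strict doubling inequality if the set of such integers were finite. But the crux, Step 3, is a genuine gap, and on two counts. First, you state $J(2N)<2J(N)$ for \emph{every} $N$ and only sketch a proof; the paper never proves (nor needs) this unconditional statement at this stage -- it proves it only when $J(N)$ \emph{admits a minimiser} (Corollary~\ref{cor:J(2N)}, via Proposition~\ref{lem:strongBinding}), which would in fact suffice for your own contradiction since your $N_0$ lies in $\cB$ and hence has a minimiser by Theorem~\ref{th:Lewin}; likewise $2\in\cB$ follows from the case $N=1$. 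That conditional doubling is the heart of the paper: one places two far-apart copies of the \emph{actual} minimiser, orthonormalises via the Gram matrix, and shows that the cross terms created by orthonormalisation, of size $O(R^{2d}e^{-2\eps' R})$, are beaten by the nonlinear superadditivity term, of size $\gtrsim R^{p(1-d)}e^{-2p\frac{\eps\eps'}{\eps+\eps'}R}$; for two identical copies $\eps=\eps'$ and the comparison requires exactly $p<2$. This uses the Euler--Lagrange equations and the two-sided exponential decay bounds of Lemma~\ref{lem:exp_decay_minimiser}, i.e.\ it genuinely uses the minimiser, and none of this analysis appears in your proposal.

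Second, the construction you do sketch (two overlapping rescaled copies orthogonalised by a momentum boost $e^{i\mathbf k\cdot x}$) appears to fail quantitatively, not just to be ``delicate''. Writing $t=\mu^2$ and $\alpha=\tfrac{d(p-1)}{2}\in(0,1)$, your trial energy is $f(t)=(2T+N|\mathbf k|^2/\mu^2\, t)\,t$-type, more precisely $f(t)=\bigl(2T+Ns^2\bigr)t-\tfrac{2^p}{p}Vt^{\alpha}$ with $s=|\mathbf k|/\mu$, to be compared with $2T-\tfrac2pV\approx 2J(N)=\min_t\bigl(2Tt-\tfrac2pVt^\alpha\bigr)$. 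Optimising over $t$ shows that $\min_t f(t)<2J(N)$ forces $s^2<\tfrac{2T}{N}\bigl(2^{(p-1)/\alpha}-1\bigr)=\tfrac{2T}{N}\bigl(2^{2/d}-1\bigr)$, a fixed constant; but near-orthogonality of the two boosted families requires $|\mathbf k|/\mu=s\to\infty$ (the overlaps are $\widehat{u_iu_j}(\mathbf k/\mu)$), and for bounded $s$ the Gram--Schmidt correction is not small, so the $2^p$-versus-$2$ gain cannot pay for the boost. In short, the main obstacle you flag is precisely where the paper's exponentially-small-binding analysis (Proposition~\ref{lem:strongBinding}) does the work, and your proposal neither reproduces it nor supplies a working substitute.
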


In Section~\ref{sec:rHFD} we use Theorem~\ref{thm:ground_states_orthonormal_all_p} to prove translational symmetry breaking of the Kohn-Sham model for a crystal with a large Dirac exchange coefficient. This result is in the same spirit as the recent work~\cite{Ricaud-17} by Ricaud on the Thomas-Fermi-von Weizs\"acker-Dirac model, and it was indeed our first motivation for studying the fermionic NLS equation~\eqref{eq:fermionic_NLS}. This problem naturally brings the case $d=3$ and $p=4/3$ which is covered by Theorem~\ref{thm:ground_states_orthonormal_all_p}. 

When $p=1+2/d$ the system has an additional invariance and it is not appropriate to fix the constant in front of the nonlinear term $\int_{\R^d}(\sum_{i=1}^N|u_i|^2)^p$ in~\eqref{eq:def_energy_u_j} to be $1/p$. If we study the minimisation problem similar  to~\eqref{eq:def_J_integers} with a constant $\alpha>0$ in front of the nonlinear term, we obtain that there exists a critical $\alpha_c(N)>0$ such that $J(N)=0$ for $\alpha\leq \alpha_c(N)$ and $J(N)=-\ii$ for $\alpha>\alpha_c(N)$. There are no minimisers for $0\leq\alpha<\alpha_c(N)$. In the second part~\cite{FraGonLew-20_ppt} of this work, R.L.~Frank and the first two authors prove a result similar to Theorem~\ref{thm:ground_states_orthonormal_all_p} for $\alpha=\alpha_c(N_j)$ with $N_j\to\ii$, in dimensions $d\geq3$. 

As we already mentioned, the threshold $p=2$ appears naturally in our proof (Proposition~\ref{lem:strongBinding}). So our results do not cover the case $2 \le p < 3$ in dimension $d = 1$. Actually, binding probably never holds for $p \in [2, 3)$ in dimension $d=1$.  

\begin{conjecture}[Absence of binding in 1D for $p\in[2,3)$]\label{conj:1D}
    In dimension $d=1$, for all $2 \leq p < 3$ and all $N \ge 2$, $J(N)$ does not have minimiser, and we have
    $$J(N)=NJ(1)=NI(1).$$
\end{conjecture}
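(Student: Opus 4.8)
\emph{Strategy towards Conjecture~\ref{conj:1D}.} The inequality $J(N)\le NJ(1)$ holds for all $d$ and $p$: placing far-apart translates of the one-body ground state $Q$, orthonormalising them at an exponentially small cost and letting the separations tend to infinity gives $J(N)\le J(N-1)+J(1)$, and iterating yields $J(N)\le NJ(1)$. So the conjecture is equivalent to the reverse bound $J(N)\ge NJ(1)$ together with the non-existence of a minimiser, and the natural plan is an induction on $N$, the case $N=1$ being trivial. For the inductive step one would run the concentration--compactness analysis underlying Theorem~\ref{th:Lewin} on a minimising sequence for $J(N)$: either it is precompact modulo translations and produces a minimiser of $J(N)$, or it splits and one obtains $J(N)=J(N_0)+\sum_{i\ge1}J(N_i)$ with $\sum_iN_i=N$ and all $N_i<N$, in which case the induction hypothesis forces $J(N)=NJ(1)$. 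Thus the entire content reduces to the first alternative: one must show that $J(N)$ is not attained, and that a hypothetical minimiser would anyway have energy at least $NJ(1)$. In fact it would suffice to prove the sharp a priori bound $\cE(u_1,\dots,u_N)\ge NJ(1)$ for \emph{every} orthonormal family, with equality only in the degenerate limit of infinitely separated solitons: this gives both $J(N)=NJ(1)$ and the absence of minimisers at once.

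There seem to be two complementary routes to such a bound. The soft route combines the one-dimensional Lieb--Thirring kinetic inequality $\sum_n\|\nabla u_n\|_{L^2}^2\ge c\int_\R\rho^3$ (here $\rho=\sum_n|u_n|^2$) with the interpolation $\int_\R\rho^p\le N^{(3-p)/2}\big(\int_\R\rho^3\big)^{(p-1)/2}$, valid for $1<p<3$; optimising the resulting expression over $\int_\R\rho^3$ produces a lower bound of the correct form $\cE(u_1,\dots,u_N)\ge NL$ with the right linear scaling in $N$, but with a constant $L<I(1)=J(1)$, hence \emph{not} the sharp value, because neither the Lieb--Thirring inequality nor the interpolation is saturated by a union of well-separated copies of $Q$. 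The sharp route should instead exploit the structure of a minimiser: by Proposition~\ref{lem:Euler-Lagrange} it solves~\eqref{eq:fermionic_NLS} with $\mu_N<0$, so $\rho$ decays exponentially, and one would try to split it into two clusters of masses $N_L+N_R=N$ with $N_L,N_R<N$ at small cost, using an IMS partition at a point where $\rho$ is small and the fact that, for $p\ge2$, the nonlinear cross term $\int_\R[(\rho_L+\rho_R)^p-\rho_L^p-\rho_R^p]$ is small compared with the kinetic gain of the separation; the induction hypothesis $J(N_L)=N_LJ(1)$, $J(N_R)=N_RJ(1)$ then closes the argument, and a strict version of the cross-term estimate --- together with the fact that a $J(1)$-minimiser is everywhere positive, so the two clusters can never be perfectly decoupled --- should exclude the minimiser itself.

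The exponent $p=2$ enters exactly as in Proposition~\ref{lem:strongBinding}: for two ground states at distance $R$ the nonlinear cross term is of order $e^{-2(p-1)\kappa R}$ while the cost of localising the kinetic energy is of order $e^{-2\kappa R}$, so the two balance precisely at $p=2$, separation is favourable for $p>2$, and the nonlinear gain wins for $p<2$ --- which is why binding and existence hold in the range covered by our positive results. The borderline case $p=2$, $d=1$ is the most delicate and presumably requires the special algebraic structure of the one-dimensional cubic problem (the vanishing of soliton interactions). The real obstacle, however, is not this two-soliton heuristic but its uniform version: one must control an \emph{arbitrary} orthonormal configuration rather than well-chosen translates of $Q$, and a minimiser's density need not have a valley at all --- for large $N$ it should look like a single Thomas--Fermi plateau --- so the clean splitting may fail and one has to defeat a genuine single-cluster competitor with a sharp constant. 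Closing this gap between the correct $O(N)$ scaling and the sharp bound $NJ(1)$ is the step we do not know how to carry out, which is why the statement is only a conjecture.
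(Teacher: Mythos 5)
First, a point of calibration: the statement you are addressing is a \emph{conjecture} in the paper, and the paper contains no proof of it. It only records that the easy inequality $J(N)\le NJ(1)$ follows from subadditivity (Lemma~\ref{lem:weakBinding}), that the endpoint $p=2$ is settled in the companion paper (Theorem~\ref{thm:p=2}), and that the full range $p\in[2,3)$ would follow from the one-dimensional Lieb--Thirring conjecture for eigenvalue powers $1\le\kappa\le 3/2$. Your text is likewise a strategy sketch rather than a proof, and you say so explicitly; judged as such, the easy half (subadditivity gives $J(N)\le NJ(1)$, so everything reduces to the sharp lower bound $\cE(u_1,\dots,u_N)\ge NJ(1)$ for arbitrary orthonormal families, plus non-existence) is correct and is exactly how the paper views the problem.

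The genuine gap is the one you name, but it can be pinned down more precisely than ``control an arbitrary orthonormal configuration''. By Remark~\ref{rmk:rescale}, the identity $J(N)=NJ(1)$ is literally equivalent to the statement that the best constant $c(1,p,N)$ in \eqref{eq:other_inequality} is independent of $N$, i.e.\ that the sharp Gagliardo--Nirenberg-type inequality for orthonormal systems has the same constant as for a single function. In $d=1$ and for $p\in[2,3)$ this is the dual form of the Lieb--Thirring conjecture for $\kappa=\frac{p}{p-1}-\frac12\in(1,\frac32]$; at the endpoint $\kappa=3/2$, i.e.\ $p=2$, the sharp constant is known since Lieb--Thirring, which is precisely how Theorem~\ref{thm:p=2} is obtained in part II. Your ``soft route'' (kinetic Lieb--Thirring with $\int_\R\rho^3$ plus H\"older against $\int_\R\rho=N$) is a non-sharp shadow of this reduction: even with the sharp constant in the $\rho^3$ inequality, the H\"older step is lossy (it is saturated only by densities that are constant on their support, not by well-separated copies of $Q^2$), so this route can never produce the constant $NJ(1)$; one must work with the $p$-dependent inequality \eqref{eq:other_inequality} directly, and its sharp constant for $N\ge2$ is exactly the open Lieb--Thirring problem. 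Your ``sharp route'' (IMS splitting of a hypothetical minimiser) has the obstruction you already point out --- a minimiser for large $N$ would look like a single Thomas--Fermi plateau with no valley to cut at --- and, beyond that, the concentration-compactness dichotomy you invoke only yields $J(N)=J(k)+J(N-k)$ when strict binding fails (Theorem~\ref{th:Lewin} together with subadditivity); it does not by itself exclude a genuine minimiser with energy strictly below $NJ(1)$, which is precisely what the sharp functional inequality must rule out. So your proposal correctly locates the difficulty and the $p=2$ threshold heuristic (consistent with Proposition~\ref{lem:strongBinding}), but it does not supply the missing ingredient; no such ingredient exists in the paper either, which is why the statement is left as a conjecture there, with only the $p=2$ case proved elsewhere.
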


It is explained in~\cite[Remark 14]{FraGonLew-20_ppt} that Conjecture~\ref{conj:1D} follows from the one-dimensional Lieb-Thirring conjecture in~\cite{LieThi-76}. Without entering into the details and using the notation of~\cite{LieThi-76,Frank-20_ppt}, this conjecture states that the best Lieb-Thirring constant $L_{\gamma, 1} $ in dimension $d=1$ coincides with the one-bound-state (Gagliardo-Nirenberg) constant $L_{\gamma, 1}^{(1)}$ for all $1/2 \le \gamma \le 3/2$. In fact, Lieb and Thirring proved in~\cite{LieThi-76} that $L_{3/2, 1} = L_{3/2,1}^{(1)}$ for $\gamma = 3/2$ and this implies the following result, proved in~\cite{FraGonLew-20_ppt}.

\begin{theorem}[Non-existence for $d=1$, $p=2$~\cite{FraGonLew-20_ppt}]\label{thm:p=2}
Let $d=1$ and $p=2$. Then we have $J(N)=N\,J(1)=N\,I(1)$  for all $N\in\N$. In addition, $J(N)$ admits no minimiser for $N\geq2$.
\end{theorem}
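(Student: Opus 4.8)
The plan is to reduce the whole statement to the single assertion that, for $d=1$, $p=2$ and $N\geq2$, the infimum $J(N)$ is not attained; the identity $J(N)=N\,J(1)$ then follows by induction on $N$. Indeed $J(N)\leq N\,J(1)$ is immediate by testing with $N$ copies of the one-body NLS soliton $Q=\sqrt{2|\mu|}\,\mathrm{sech}(\sqrt{|\mu|}\,x)$ placed at mutually large distances: since $p=2<3$ in dimension one, both the kinetic and the $\int_{\RR}\rho^2$ interaction between two such profiles tend to $0$. For the reverse inequality I would apply concentration-compactness to a minimising sequence (say at the level of the one-body densities, or of the density matrices): vanishing is excluded because $J(N)<0$ keeps $\int_{\RR}\rho^2$ bounded below along the sequence; compactness up to translations would produce a minimiser, impossible for $N\geq2$; hence dichotomy occurs and $J(N)\geq J(K)+J(N-K)=K\,J(1)+(N-K)\,J(1)=N\,J(1)$ for some $1\leq K\leq N-1$, by the induction hypothesis. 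Everything thus rests on the non-existence of minimisers for $N\geq2$, a statement whose proof may use $J(K)=K\,J(1)$ for $K<N$ and $J(N)\leq N\,J(1)$.

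So suppose $(u_1,\dots,u_N)$ minimises $J(N)$ with $N\geq2$. By Proposition~\ref{lem:Euler-Lagrange} it is a ground state: the $u_n$ are the first $N$ eigenfunctions of $h:=-\partial_{xx}-\rho$, with $\rho=\sum_n|u_n|^2$. In dimension one the eigenvalues of $h$ are simple, so $\mu_1<\mu_2<\dots<\mu_N<0$, each $u_n$ is real up to a phase with exactly $n-1$ nondegenerate zeros, and $u_n$ decays like $e^{-\sqrt{|\mu_n|}\,|x|}$. Multiplying $-u_n''-\rho u_n=\mu_n u_n$ by $u_n$, and by $x\,u_n'$, then integrating and summing over $n$, yields
$$\sum_{n=1}^N\|u_n'\|_{L^2(\RR)}^2-\int_{\RR}\rho^2=\sum_{n=1}^N\mu_n,\qquad 2\sum_{n=1}^N\|u_n'\|_{L^2(\RR)}^2+\int_{\RR}\rho^2=-2\sum_{n=1}^N\mu_n;$$
solving these for the kinetic and the nonlinear term and using $\cE=\sum_n\|u_n'\|_{L^2}^2-\tfrac12\int_{\RR}\rho^2$ gives $\cE(u_1,\dots,u_N)=\tfrac13\sum_n\mu_n=J(N)$, $\sum_n\|u_n'\|_{L^2}^2=-J(N)$ and $\int_{\RR}\rho^2=-4J(N)$. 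Next, deleting the top orbital gives a state admissible for $J(N-1)$, and since $\|u_N'\|_{L^2}^2=\mu_N+\int_{\RR}\rho|u_N|^2$ one finds $\cE(u_1,\dots,u_{N-1})=\cE(u_1,\dots,u_N)-\mu_N-\tfrac12\int_{\RR}|u_N|^4\geq J(N-1)=(N-1)J(1)$; with $J(N)\leq N\,J(1)$ this forces $-\mu_N\geq\tfrac1{48}+\tfrac12\int_{\RR}|u_N|^4\geq\tfrac1{48}$. Hence $|\mu_N|$, and therefore the exponential decay rate of $\rho$, is bounded below, while $\rho$ still carries total mass $N$, has $\int_{\RR}\rho^2=-4J(N)$, and satisfies the Lieb--Thirring bound $\int_{\RR}\rho^3\lesssim\sum_n\|u_n'\|_{L^2}^2=-J(N)$. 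These are strong rigidity constraints.

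The remaining — and hardest — step is to extract a contradiction from this rigidity. The uniform localization should force a would-be minimiser to be, up to a small error, a superposition of one-body soliton bumps placed at large pairwise separations, so that $\cE(u_1,\dots,u_N)=N\,J(1)+\sum_{i<j}\cI(R_i-R_j)+o(1)$, with $\cI$ the interaction energy of two such bumps. The decisive input is the sign of $\cI$: this is precisely the computation underlying Proposition~\ref{lem:strongBinding}, in which $p=2$ is the threshold where the leading interaction changes sign — attractive (negative) for $1<p<2$, which is exactly what produces binding in Theorem~\ref{thm:ground_states_orthonormal_all_p}, and non-negative at $p=2$. Thus $\cE(u_1,\dots,u_N)>N\,J(1)=J(N)$, contradicting minimality, so $J(N)$ is not attained. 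I expect the two main obstacles to be: (i) establishing rigorously that an \emph{arbitrary} minimiser — not merely one shaped like a soliton train — has this ``separated bumps plus small error'' structure, for which the localization estimate above and a decomposition of $\rho$ into bump regions are needed; and (ii) proving strict positivity of $\cI$ at $p=2$, uniformly in the separations. An alternative route, presumably the one taken in~\cite{FraGonLew-20_ppt}, exploits the conserved quantities of the integrable one-dimensional cubic system. Once non-existence holds for every $N\geq2$, the induction of the first paragraph yields $J(N)=N\,J(1)$ and completes the proof.
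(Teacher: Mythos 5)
Your first two paragraphs are fine as far as they go: the reduction of the theorem to non-existence is just the contrapositive of Theorem~\ref{th:Lewin} combined with subadditivity and induction, and your virial/Pohozaev identities, as well as the bound on $\mu_N$ obtained by deleting the top orbital, are correct computations. But the heart of the theorem --- the lower bound $J(N)\ge N\,J(1)$, equivalently non-existence for $N\ge2$ --- is never proved: your third paragraph is a plan, not an argument, and both of its ingredients are genuinely problematic. First, nothing in the rigidity constraints you derive ($|\mu_N|$ bounded away from $0$, $\int_\R\rho^2=-4J(N)$, a Lieb--Thirring bound on $\int_\R\rho^3$) forces a putative minimiser to be, up to small errors, a train of one-body solitons at large mutual separations; for $p<2$ the actual minimisers are a single bound cluster (cf.\ Figure~\ref{fig:crystal_1D}), and you offer no mechanism excluding such a cluster at $p=2$. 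Second, even granting that structure, the sign of the two-bump interaction at $p=2$ is exactly the unresolved borderline case: in the expansion~\eqref{eq:difference_strong_binding} of Proposition~\ref{lem:strongBinding}, with equal decay rates $\epsilon=\epsilon'$ the attractive term $-\frac1p\int\bigl((\rho+\rho'_R)^p-\rho^p-(\rho'_R)^p\bigr)$ and the orthonormalisation cross terms are both of order $e^{-2\epsilon R}$ (up to polynomial factors) when $p=2$, so non-negativity of the interaction cannot be read off from this expansion, let alone uniformly in the separations and against the $o(e_R^2)$ errors. The decisive step is therefore missing, and the route you sketch for it would not go through as stated.

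For comparison, note that the present paper does not prove Theorem~\ref{thm:p=2} at all: it is quoted from the companion work \cite{FraGonLew-20_ppt}, and the paper states explicitly that it is obtained there from the sharp Lieb--Thirring inequality with exponent $\kappa=3/2$ in dimension one, proved in \cite{LieThi-76}, whose best constant coincides with the one-bound-state (Gagliardo--Nirenberg) constant. That approach yields the inequality $J(N)\ge N\,J(1)$ globally, for every admissible orthonormal system, with no structural analysis of a hypothetical minimiser, and the non-existence assertion is obtained there as part of the same argument. Your closing guess that the proof ``exploits the conserved quantities of the integrable one-dimensional cubic system'' is in the right spirit --- the $\kappa=3/2$ Lieb--Thirring constant is indeed computed via trace formulas --- but you did not carry out any version of this, so the proposal is incomplete precisely where the theorem is hard. (Incidentally, the statement as printed contains a typo: $J(N)=N\,J(N)$ should read $J(N)=N\,J(1)$, as you implicitly assumed.)
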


\subsection{Comments and ideas of proof}
This section contains additional comments about Theorems~\ref{thm:ground_states_orthonormal_small_p} and~\ref{thm:ground_states_orthonormal_all_p} as well as some proof ideas. 

First, we place our result in a more general context. Several tools of nonlinear analysis have been generalised to systems of orthonormal functions, which can also be seen as random fields~\cite{Suzzoni-15_ppt,ColSuz-19_ppt}. The most celebrated example is the \emph{Lieb-Thirring inequality}~\cite{LieThi-75,LieThi-76,LieSei-09,FraHunJexNam-18_ppt} which states that 
\begin{equation}
\sum_{n=1}^N\int_{\R^d}|\nabla u_n(x)|^2\,\rd x\geq c_{\rm LT}(d)\int_{\R^d}\left(\sum_{n=1}^N|u_n(x)|^2\right)^{1+\frac2d}\,\rd x
\label{eq:LT_orthonormal}
\end{equation}
where the positive constant $c_{\rm LT}(d)>0$ is independent of $N$. This important inequality replaces the Gagliardo-Nirenberg inequality for large orthonormal systems. It will play a role in our analysis of the large-$N$ behaviour of $J(N)$ later in Section~\ref{sec:large_N}. Other Gagliardo-Nirenberg-type inequalities were considered in~\cite{Lieb-83d}. More recently, the Strichartz inequality has been extended to orthonormal systems in~\cite{FraLewLieSei-14,FraSab-17,BezHonLeeNakSaw-17} and it has played a central role for the existence and the long time behaviour of infinite systems~\cite{LewSab-15a,LewSab-14b,CheHonPav-17,CheHonPav-17b,ColSuz-19_ppt}. The fermionic NLS time-dependent equation with $N=+\ii$ has been studied in~\cite{CheHonPav-17b}. 

After all these works on systems of orthonormal functions, investigating ground states of the fermionic NLS equation~\eqref{eq:fermionic_NLS} seems a natural next step. There are many open questions and we hope that our paper will stimulate more work on the problem. 

\medskip

Next, we briefly explain our strategy of proof for Theorems~\ref{thm:ground_states_orthonormal_small_p} and~\ref{thm:ground_states_orthonormal_all_p} and discuss the main message stemming from this analysis. Assuming that $J(N)$ and $J(M)$ have minimisers, the proof of the binding inequality $J(N+M)<J(N)+J(M)$ goes by evaluating the nonlinear interaction of these two minimisers placed far away, as is classical in such variational problems. The main difficulty here is that the functions $u_i$ all decay exponentially fast at infinity so that this interaction is exponentially small. Our main result follows from a careful evaluation of this term. In informal words, our main message is thus that \textbf{quantum tunnelling can induce binding (hence existence of ground states) in a nonlinear model, provided that the nonlinearity is `sufficiently strong'}. Here `strong' is precisely the condition that $p<2$. Our simulations presented below in Section~\ref{sec:large_N} and in~\cite{FraGonLew-20b_ppt} suggest that this is in fact a \textbf{real physical effect} in dimensions $d=1,2$, not just a mathematical argument. Namely, we have numerically found ground states for $J(N)$ which are very close to being a combination of $N$ copies of the NLS solution $Q$ (see Figure~\ref{fig:2d}), with an energy very close to $N\,J(1)$. 

To our knowledge, this is the first result of this type for a nonlinear translation-invariant problem. A similar evaluation of exponentially small nonlinear interactions can be for instance found in~\cite{Albanese-88,CatLio-93a,CatLio-93b,CatLio-93c,BahLi-90,BahLio-97,Daumer-94,OlgRou-20_ppt} but for completely different models which are not translation-invariant and have no orthonormal constraint. In~\cite{FraGonLew-20_ppt} our strategy is used to disprove part of the Lieb-Thirring conjecture~\cite{LieThi-76}. 

Let us now explain where the condition $p<2$ arises in the proof, which can be found in Section~\ref{sec:proof_binding}. We assume that there are two orthonormal systems $(u_n)_{n=1}^N$ and $(v_m)_{m=1}^M$ which are minimisers for $J(N)$ and $J(M)$, respectively. We denote the two densities by $\rho=\sum_{n=1}^N|u_n|^2$ and $\rho'=\sum_{m=1}^M|v_m|^2$, the decay of which is dictated by the last eigenfunctions $u_N$ and $v_M$ which have the smallest eigenvalues $\mu_N$ and $\mu'_M$ in absolute value. Next we add to the $u_n$'s a translation of the $v_m$'s by a large amount $R$ in the direction of $e_1=(1,0,...,0)$. The resulting family of $N+M$ functions is not orthonormal but it is exponentially close to being so. Orthonormalising it generates an error of the order $O(e_R^2)$ in the energy, where
$$e_R:=\max_{n,m}\int_{\R^d}|u_n(x)|\,|v_m(x-Re_1)|\rd x$$
is the largest possible overlap between these functions. In the limit $R\to\ii$, $e_R$ essentially behaves like 
$\exp(-\min(|\mu_N|^{\frac12},|\mu'_M|^{\frac12})R)$ due to the exponential decay of the eigenfunctions. On the other hand, the nonlinear term is always attractive and a careful evaluation in the intermediate region gives that it is at least of the order
$$\exp\left(-\frac{\sqrt{|\mu_N|\,|\mu'_M|}}{\sqrt{|\mu_N|}+\sqrt{|\mu'_M|}}p R\right).$$
Therefore, the nonlinearity wins over the orthonormalisation error under the assumption that 
\begin{equation}
    1 < p < 1 + \sqrt{ \frac{\min(| \mu_N |, | \mu'_M | ) }{ \max ( | \mu_N|, | \mu'_M |) }  }
\label{eq:cond_p_mu_mu}
\end{equation}
where the right side is always less or equal than $2$. This is how the condition $p<2$ occurs. The difficulty with~\eqref{eq:cond_p_mu_mu} is that the Lagrange multipliers $\mu_N$ and $\mu'_M$ are unknown. To prove Theorem~\ref{thm:ground_states_orthonormal_small_p}, we derive universal lower and upper bounds (independent of $N$) on the last eigenvalue $\mu_N$. This gives a critical exponent $p_c(d)$ below which binding holds. To prove Theorem~\ref{thm:ground_states_orthonormal_all_p}, we observe that if $N = M$, then we can choose the same minimiser for $J(N)$ and $J(M)$ so that $\mu_N = \mu_M$. In this simpler case the interaction is attractive whenever $1<p<\min(2,1+2/d)$. So if $J(N)$ has a minimiser, then
\begin{equation}
J(2N)<2J(N).
\label{eq:binding_two_copies}
\end{equation}
Since there are ground states for $N=1$, we deduce that there are ground states for $N=2$. By a simple pigeon-hole principle, we are then able to deduce that binding holds for an infinite sequence $N_j\to\ii$, using only~\eqref{eq:binding_two_copies}. 

\medskip

We end this section with two additional comments about Theorem~\ref{thm:ground_states_orthonormal_small_p} and~\ref{thm:ground_states_orthonormal_all_p}. First, we link our NLS equation for fermionic systems to a kind of Gagliardo-Niremberg-Sobolev inequality for orthonormal functions.

\begin{remark}[Gagliardo-Niremberg-Sobolev for orthonormal systems]\label{rmk:rescale}\rm 
 If we rescale all the $u_n$ in the manner $\alpha^{d/2}u_n(\alpha x)$ and optimise over $\alpha$ we obtain the inequality
\begin{equation}
  N^{\frac{2}{d(p-1)}-1}\sum_{n=1}^N\int_{\R^d}|\nabla u_n(x)|^2\,\rd x\geq c(d,p,N)\; \left(\int_{\R^d}\bigg(\sum_{n=1}^N|u_n(x)|^2\bigg)^p\,\rd x\right)^{\frac{2}{d(p-1)}}
 \label{eq:other_inequality}
\end{equation}
with the best constant
\begin{equation}
 c(d,p,N)=\left(\frac{N}{-J(N)}\right)^{\frac{1+\frac2d-p}{p-1}}\left(\frac{d}{2p}\right)^{\frac{2}{d(p-1)}}(p-1)\left(1+\frac2d-p\right)^{\frac{1+\frac2d-p}{p-1}}.
\end{equation}
The constant $c(d,p,N)$ has a finite limit when $N\to\ii$, as we will prove in the next section.
Our theorems give the existence of optimisers for this inequality (either for small $p$ or for a subsequence $N_j\to\ii$). The (non-sharp) inequality~\eqref{eq:other_inequality} easily follows from the Lieb-Thirring inequality, using $\int_{\R^d}\sum_{n=1}^N|u_n|^2=N$ together with H\"older's inequality. In the critical case $p = 1 + \frac{2}{d}$, the two terms in the energy scale similarly, so one cannot deduce an equality similar to~\eqref{eq:other_inequality}. See~\cite{FraGonLew-20_ppt} for more about~\eqref{eq:other_inequality} and its link with Lieb-Thirring inequalities.\qed
\end{remark}

Our next remark is about how to interpret our result within the framework of $N$-particle anti-symmetric wave functions. The idea is that without a two-body interaction term and with a concave nonlinearity depending only on the density, a minimisation problem set on anti-symmetric wave functions can be restricted to Slater determinants.

\begin{remark}[Interpretation in terms of $N$-particles]\label{rmk:N-particle}\rm 
For a wave function $\Psi \in L^2((\R^{d})^N, \C)$ with $\| \Psi \|_{L^2((\R^{d})^N, \C)} = 1$, consider the energy functional
\begin{equation}
   \cE_{\rm QM}(\Psi) := \int_{(\R^d)^N}|\nabla\Psi(x_1,...,x_N)|^2\,\rd x_1\cdots \rd x_N-\frac1p\int_{\R^d}\rho_\Psi(x)^{p}\,\rd x
   \label{eq:NLS_many_body}
\end{equation}
where the density $\rho_\Psi$ is defined by
\begin{multline*}
\rho_\Psi(x)=\int_{(\R^d)^{N-1}}|\Psi(x,x_2,...,x_N)|^2\,\rd x_2 \cdots \rd x_N+\cdots \\\cdots+\int_{(\R^d)^{N-1}}|\Psi(x_1,x_2,...,x)|^2\,\rd x_1 \cdots \rd x_{N-1}. 
\end{multline*}
Minimisation problems involving functionals of the type~\eqref{eq:NLS_many_body} (posed on the $N$-particle space with a nonlinear term depending on $\rho_\Psi$) have been studied in~\cite{Lewin-11}. Here, without further constraints on $\Psi$, the minimum of $\cE_{\rm QM}$ on the unit sphere is attained for a symmetric (that is, bosonic) wave function, which forms a Bose-Einstein condensate on the NLS ground state $Q_N/\sqrt{N}$ defined in~\eqref{eq:I_lambda}:
\[
    \Psi(x_1, \cdots, x_N) = \frac{\re^{\ri \theta}}{N^{\frac{N}2}} Q_{N}(x_1) \cdots Q_N(x_n).
\]
This is a simple consequence of the Hoffmann-Ostenhof inequality~\cite{Hof-77}
$$\int_{(\R^d)^N}|\nabla\Psi(x_1,...,x_N)|^2\,\rd x_1\cdots \rd x_N\geq\int_{\R^d}|\nabla\sqrt{\rho_\Psi}(x)|^2\,\rd x$$
which implies that 
$$\cE_{\rm QM}(\Psi)\geq \cE(\sqrt{\rho_\Psi})$$
where we recall that $\cE$ is the NLS energy~\eqref{eq:NLS_E}. In other words, the unconstrained $N$-particle problem is the same as the NLS problem for one function~\eqref{eq:I_lambda}. Note that when $N\to\ii$ the system collapses since $Q_N$ is a rescaling of $Q$ by the factor $N^{-(p-1)/(d+2-dp)}$. 

The situation is different if we restrict the minimisation to anti-symmetric (that is, fermionic) wave functions. From the arguments in~\cite{Lewin-11} and in Lemma~\ref{lem:gamma_wants_to_have_finite_rank} below, it follows that minimisers are Slater determinants (also called Hartree-Fock states), that is, of the form
$$\Psi(x_1,...,x_N)=\frac1{\sqrt{N!}}\det(u_j(x_k))_{1\leq j,k\leq N}$$
where $u_1,...,u_N$ form an orthonormal system in $L^2(\R^d)$. Slater determinants are the least correlated wave functions compatible with the anti-symmetric constraint. These wave functions satisfy that $\cE_{\rm QM}(\Psi)=\cE(u_1,...,u_N)$, our NLS functional in~\eqref{eq:def_energy_u_j}. The $N$-particle interpretation of $J(N)$ in~\eqref{eq:def_J_integers} is therefore that it corresponds to minimising $\cE_{\rm QM}$ over anti-symmetric wave functions. In this light our Definition~\ref{def:ground_state} of a `ground state' is justified since the corresponding $N$-particle wave function $\Psi$ is indeed a minimiser of $\cE_{\rm QM}$, in the anti-symmetric subspace. On the contrary to the bosonic case, the fermionic model is $H$-stable~\cite{Ruelle} since $J(N)$ behaves linearly in $N$ in the limit $N\to\ii$, as discussed in the next section.\qed
\end{remark}

\subsection{The large--$N$ limit}\label{sec:large_N}
Our main results, Theorems~\ref{thm:ground_states_orthonormal_small_p} and~\ref{thm:ground_states_orthonormal_all_p}, imply that $J(N)$ admits a minimiser, for all $N$ or for a subsequence. It then seems natural to ask what is happening in the limit $N\to\ii$. This section contains results and comments in this direction. 

Several possible scenarios come to mind. A very natural possibility is that a sequence of minimisers $\rho_N=\sum_{n=1}^N|u_n|^2$ would converge (e.g. in $L^\ii_{\rm loc}(\R^d)$ after an appropriate translation) to some limit $\rho_\ii$ which is extended over the whole space. This density $\rho_\ii$ could for instance be constant (semi-classical or fluid phase), or a non-trivial periodic function (crystallisation, or solid phase~\cite{BlaLew-15}). We think that these are the only two possibilities and we explain in this section which one we expect depending on the values of $p$ and $d$. We refer to~\cite{FraGonLew-20_ppt,FraGonLew-20b_ppt} for a similar discussion in the context of Lieb-Thirring inequality.

\subsubsection{The NLS Thomas-Fermi problem}
We first introduce the NLS Thomas-Fermi problem~\cite{Lieb-83b}, which will give the value of $\rho_\ii$ in case it is constant over the whole space. For $d\geq1$ and $1<p<1+\frac{2}{d}$ we introduce the minimisation problem
\begin{equation}
 \min_{\substack{\rho\geq0\\ \int_{\R^d}\rho=N}}\int_{\R^d}\left(C\rho(x)^{1+\frac2d} - \frac1p\rho(x)^p\right)\,\rd x
 \label{eq:Thomas-Fermi}
\end{equation}
where $C>0$ is a constant to be chosen later.
This problem can be solved explicitly. The next statement will be used several times in the paper and it states that the optimisers are for all $N$ exactly equal to some constant $\rho_*$ depending only on $C$, $d$ and $p$, on a set $\Omega$ of measure $|\Omega|=N/\rho_*$.

\begin{lemma}[Thomas-Fermi has constant optimisers] \label{lem:LTminimisation}
    Let $d\geq1$, $1<p<1+\frac2d$ and $C>0$. We have
\[ 
   \min_{\substack{\rho\geq0\\ \int_{\R^d}\rho=N}}\int_{\R^d}\left(C \rho(x)^{1+\frac2d}-\frac1p\rho(x)^p\right)\,\rd x = -N\left(1+\frac2d-p\right)\frac{d}{2p}\left(\frac{d(p-1)}{2pC}\right)^{\frac{p-1}{1+\frac2d-p}}
\]
with equality if and only if $\rho(x)=\rho_*\1_\Omega(x)$ for some Borel set $\Omega$ of measure $|\Omega|=N/\rho_*$, where
\begin{equation}
 \rho_*=\underset{\rho>0}{\rm argmin}\left(C \rho^{\frac2d}-\frac1p\rho^{p-1}\right)=\left(\frac{d(p-1)}{2pC}\right)^{\frac1{1+\frac2d-p}}.
 \label{eq:formula_rho_*}
\end{equation}
\end{lemma}

\begin{proof}
For $\alpha_*:=C\rho_*^{\frac2d}-\frac1p\rho_*^{p-1}$, the map
$$\rho\in\R_+\mapsto C \rho^{1+\frac2d}-\frac1p\rho^p-\alpha_*\rho$$
is non-negative over $\R_+$ and admits exactly the two zeros $0$ and $\rho_*$. Thus we have 
\begin{multline*}
\int_{\R^d}\left(C \rho(x)^{1+\frac2d}-\frac1p\rho(x)^p\right)\,\rd x-\alpha_*N\\=\int_{\R^d}\left(C \rho(x)^{1+\frac2d}-\frac1p\rho(x)^p-\alpha_*\rho(x)\right)\,\rd x\geq0 
\end{multline*}
with equality if and only if $\rho$ takes only the two values $0$ and $\rho_*$, hence is of the form $\rho(x) = \rho_* \1_{\Omega}(x)$, where $\Omega$ is a Borel set with $| \Omega |  = N/\rho_*$. The minimum in the statement is thus equal to $\alpha_*N$ and its value follows after a computation. 
\end{proof}

By Lemma~\ref{lem:LTminimisation} we see that minimisers of the Thomas-Fermi problem~\eqref{eq:Thomas-Fermi} are not unique at all, even up to translations, since any Borel set $\Omega$ is allowed as soon as $|\Omega|=N/\rho_*$. In addition, even if minimisers exist, the binding inequalities fail in NLS Thomas-Fermi theory and the energy is perfectly additive. 

\subsubsection{Limit $N\to\ii$ and link with the Lieb-Thirring conjecture}
Next, we prove that $J(N)/N$ admits a limit when $N\to\ii$ and derive upper and lower bounds on its value, in terms of the Thomas-Fermi problem~\eqref{eq:Thomas-Fermi} for two possible $C$'s. Our main result in this section is the following

\begin{theorem}[Large$-N$ limit]\label{thm:limit_N}
Let $d\geq1$ and $1<p<1+{2}/{d}$. The limit
\begin{equation}
 \boxed{e(d,p):=\lim_{N\to\ii}\frac{J(N)}{N}=\inf_{N\geq1}\frac{J(N)}{N}}
 \label{eq:limit_N_infty}
\end{equation}
exists and satisfies
\begin{equation}
 e_{\rm LT} (d,p) \le e(d, p) \le \min\big\{e_{\rm sc} (d,p),I(1)\big\} < 0
 \label{eq:upper_lower_bd_e}
\end{equation}
where
\begin{equation}
e_{\rm LT}(d, p) :=  - \left(1+\frac2d-p\right)\frac{d}{2p}\left(\frac{d(p-1)}{2p\,c_{\rm LT}(d)}\right)^{\frac{p-1}{1+\frac2d-p}}
\label{eq:e_LT}
\end{equation}
and 
\begin{equation}
e_{\rm sc}(d, p) :=  - \left(1+\frac2d-p\right)\frac{d}{2p}\left(\frac{d(p-1)}{2p\,c_{\rm sc}(d)}\right)^{\frac{p-1}{1+\frac2d-p}}
\label{eq:e_TF}
\end{equation}
are the Thomas-Fermi energies obtained for $C$ respectively equal to the Lieb-Thirring constant $c_{\rm LT}(d)$ in~\eqref{eq:LT_orthonormal} and to the semi-classical constant
\begin{equation}
 c_{\rm sc}(d)=\frac{ 4\pi^2 d}{(d+2)} \left( \dfrac{d}{| \mathbb{S}^{d-1} |} \right)^{\frac2d}.
 \label{eq:c_TF}
\end{equation}
\end{theorem}

This theorem is a corollary of our other results in this paper. Its detailed proof is provided later in Section~\ref{sec:proof_large-N-limit}. The lower bound in~\eqref{eq:upper_lower_bd_e} is an immediate consequence of the Lieb-Thirring inequality~\eqref{eq:LT_orthonormal}. The existence of the limit~\eqref{eq:limit_N_infty} follows from the fact that $J(N)$ is subadditive. To understand the meaning of the semi-classical number $e_{\rm sc}(d,p)$, take for instance the $N$ first eigenfunctions $u_i$ of the Dirichlet Laplacian in a large domain $\Omega$ of volume $|\Omega|=N/\rho_\ii$, for some constant $\rho_\ii$ to be determined. Plugging them in the energy $\cE$, we obtain an upper bound on $J(N)$. By semi-classical analysis, the corresponding density $\sum_{i=1}^N|u_i|^2$ is almost equal to the constant $\rho_\ii$ inside the domain $\Omega$. On the other hand the total kinetic energy of the $u_i$ is approximately given by Weyl's formula $c_{\rm sc}(d)\rho_\ii^{1+\frac2d} |\Omega|$ with the semi-classical constant in~\eqref{eq:c_TF}. Thus, to leading order the energy behaves as
$$\left(c_{\rm sc}(d)\rho_\ii^{1+\frac2d} -\frac{\rho_\ii^p}{p}\right)|\Omega|=\left(c_{\rm sc}(d)\rho_\ii^{\frac2d} -\frac{\rho_\ii^{p-1}}{p}\right)N.$$
From the proof of Lemma~\ref{lem:LTminimisation}, this is minimized for $\rho_\ii=\rho_*$ in~\eqref{eq:formula_rho_*} with $C=c_{\rm sc}(d)$. In other words, whenever $\rho_N$ converges to a constant, we expect it to be this $\rho_*$ and the limit $e(d,p)$ to be equal to $e_{\rm sc}(d,p)$. 

Note that the behaviour in $N$ found in~\eqref{eq:limit_N_infty} is very different from the `bosonic' case recalled above in~\eqref{eq:formula_I_lambda} and Remark~\ref{rmk:N-particle}. In the later case, $I(N)$ behaves super-linearly in $N$ and the exponent depends on $p$ and $d$. On the contrary, for systems of orthonormal functions (fermions), $J(N)$ behaves linearly for all admissible $p$ and~$d$.

The Lieb-Thirring conjecture~\cite{LieThi-76,Frank-20_ppt} states that in dimensions $d\geq3$, one has $c_{\rm LT}(d) = c_{\rm sc}(d)$. Should this conjecture be true, the upper and lower bounds would coincide in~\eqref{eq:upper_lower_bd_e} and we would thus deduce that 
\begin{equation}
e(d,p)=e_{\rm sc}(d,p)\qquad\text{for all $1<p<1+\frac2d$ and $d\geq3$.}
\label{eq:if_LT_holds}
\end{equation}
We therefore expect that \textbf{the density $\rho_N$ of a ground state for $J(N)$ should converge to the constant $\rho_*$ given by Lemma~\ref{lem:LTminimisation} with $C=c_{\rm sc}(d)$, for all $1<p<1+2/d$ in all dimensions $d\geq3$}. 

On the contrary, in dimensions $d=1,2$ it is known that $c_{\rm LT}(d)< c_{\rm sc}(d)$~\cite{LieThi-76,Frank-20_ppt} and the two bounds in~\eqref{eq:upper_lower_bd_e} do not coincide. We expect \textbf{the limiting density to be a non-trivial periodic function for all $1<p<2$ in dimensions $d=1,2$}. By Theorem~\ref{thm:p=2}, in dimension $d=1$ we think that the period will depend on $p$ and increase when $p\to2^-$, whereas the density converges to $Q^2$ in each unit cell. In dimension $d=1$, the periodicity of minimisers is confirmed by a numerical simulation\footnote{The code is available upon request to the authors.} reported on in Figure~\ref{fig:crystal_1D}. Showing such a fact is an interesting open problem~\cite{BlaLew-15}.  In dimension $d = 2$, we could not run the computations for a too large value of $N$ but the numerical results for $N\leq7$ presented in Figure~\ref{fig:2d} seem to suggest that the particles crystallise on a triangular lattice, as is often the case in two dimensions~\cite{BlaLew-15}. 

\begin{figure}[h!]
\includegraphics[width=10cm]{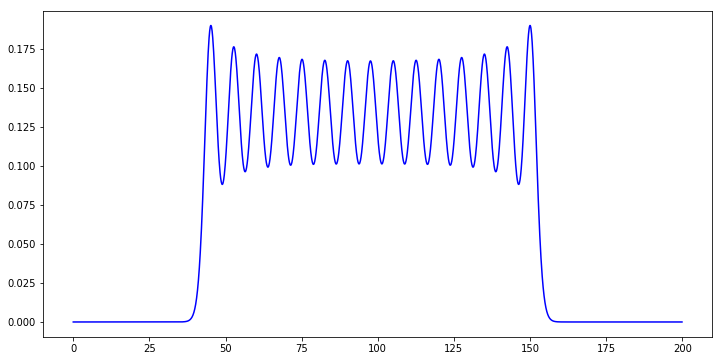}
\caption{Numerical computation of the density $\rho_N=\sum_{i=1}^{N}|u_i|^2$ of the minimiser for $J(N)$ in dimension $d=1$ with $N = 15$ and $p=1.3$. The system exhibits a crystallised phase with $15$ local maxima.\label{fig:crystal_1D}}
\end{figure}

\begin{figure}
    \centering
       
\includegraphics[width=4.2cm]{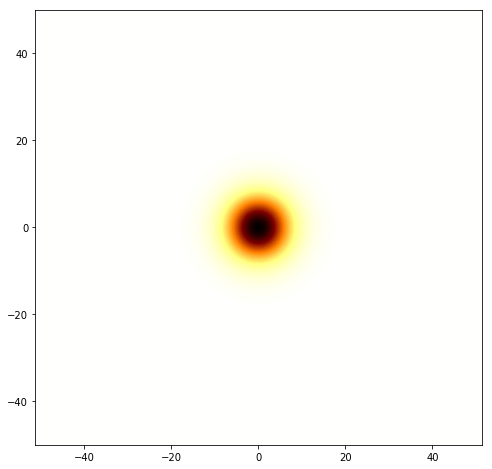}\hfill \includegraphics[width=4.2cm]{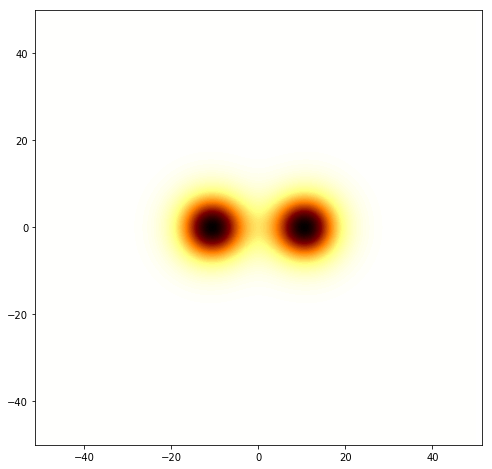}\hfill \includegraphics[width=4.2cm]{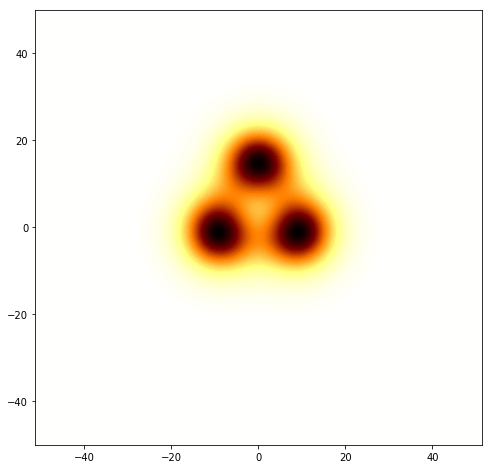}

\includegraphics[width=4.2cm]{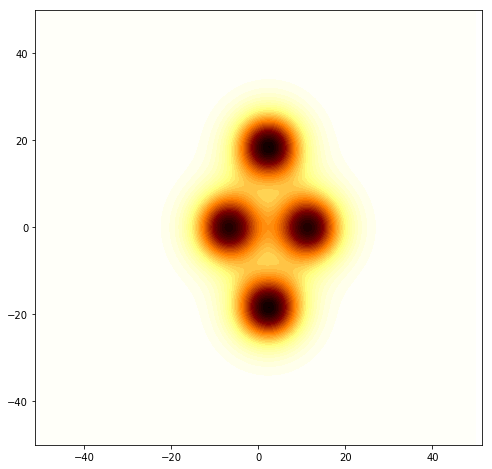}\hfill \includegraphics[width=4.2cm]{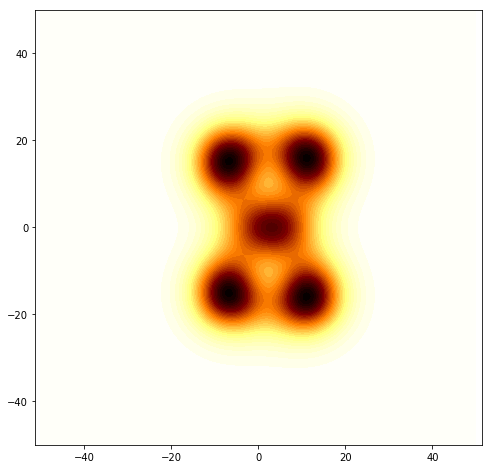}\hfill \includegraphics[width=4.2cm]{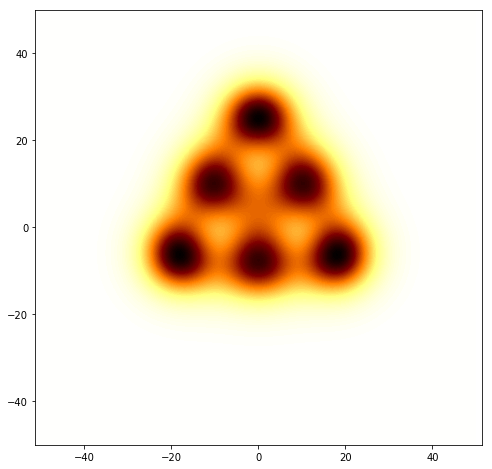}
        
\hfill \includegraphics[width=4.2cm]{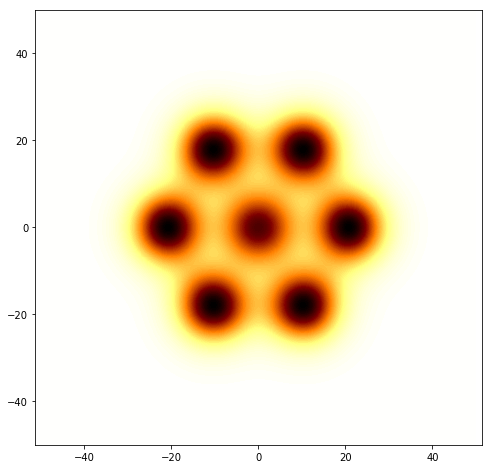}\hfill  \ 

    \caption{Numerical computation of the density $\rho_N=\sum_{i=1}^N|u_i(x)|^2$ of the minimiser of $J(N)$ for $N$ varying between $1$ and $7$ in dimension $d = 2$ with $p = 1.5$. \label{fig:2d}}
 
 \end{figure}
 
 \begin{figure}
  \centering
  \includegraphics[width=7cm]{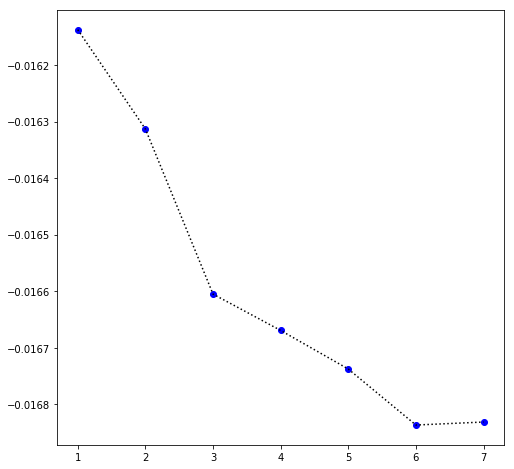}

    \caption{Plot of $J(N)/N$ as a function of $N$ in dimension $d=2$ for $p=1.5$. It is unclear whether $J(7)/7$ is smaller than $J(6)/6$, due to numerical errors.\label{fig:2d_bis}}
 \end{figure}

\section{Proof of Theorems~\ref{thm:ground_states_orthonormal_small_p}, \ref{thm:ground_states_orthonormal_all_p} and~\ref{thm:limit_N}}\label{sec:proof_thm}

\subsection{Relaxation via density matrices}

Here we introduce the relaxation of $J(N)$ using density matrices, a classical tool in the context of variational problems involving orthonormal functions~\cite{Lieb-81,Solovej-91,Bach-92,Bach-93,BacLieLosSol-94,BacLieSol-94,FraLieSeiSie-07,Lewin-11}. Let $\gamma=\gamma^*\geq0$ be a non-negative self-adjoint operator on $L^2(\R^d)$, with $\tr(\gamma)=\lambda>0$. Then $\gamma$ is compact and, by the spectral theorem, it can be diagonalised in the form
$$\gamma=\sum_{i\geq1}n_i\,|u_i\rangle\langle u_i|$$
for a system $(u_i)$ of orthonormal functions, with $n_i\geq0$ and $\sum_{i\geq1}n_i=\lambda$. Its kinetic energy is defined by (we set $P_j := - \ri \partial_{x_j}$)
$$
    \tr \left( - \Delta \gamma \right)  := \sum_{j=1}^d \tr \left(P_j \gamma P_j\right) = \sum_{j=1}^d \sum_{i \ge 1} n_i \left\| P_j u_i \right\|_{L^2}^2 = \sum_{i\geq1} n_i \int_{\R^d}|\nabla u_i(x)|^2\,\rd x,
$$
and we assume it to be finite. The corresponding density is defined by
$$\rho_\gamma(x):=\sum_{i\geq1} n_i |u_i(x)|^2.$$
This is a non-negative integrable function with $\int_{\R^d}\rho_\gamma(x)\,\rd x=\tr(\gamma)=\lambda$. The Lieb-Thirring inequality for operators~\cite{LieThi-75,LieThi-76,LieSei-09, Frank-20_ppt} states that $\rho_\gamma\in L^{1+2/d}(\R^d)$, with
\begin{equation}
 \|\gamma\|^{\frac2d}\;\tr(-\Delta \gamma) \ge c_{\rm LT}(d)\int_{\R^d}\rho_\gamma(x)^{1+\frac2d}
 \label{eq:LT}
\end{equation}
where $\|\gamma\|=\max (n_i)$ is the operator norm of $\gamma$ on $L^2(\R^d)$. When $\gamma$ is an orthogonal projection, 
\begin{equation}
 \gamma=\sum_{i=1}^N|u_i\rangle\langle u_i|,
 \label{eq:orthonormal_proj}
\end{equation}
this reduces to the inequality mentioned in~\eqref{eq:LT_orthonormal}. The optimal constant $c_{\rm LT}(d)$ is in fact the same as in~\eqref{eq:LT_orthonormal}. On the other hand, the Hoffmann-Ostenhof~\cite{Hof-77} inequality states that
\begin{equation}
\tr(-\Delta \gamma) \geq \int_{\R^d}|\nabla\sqrt{\rho_\gamma}(x)|^2\,\rd x,
\label{eq:Hoffmann-Ostenhof}
\end{equation}
which implies, by the Sobolev inequality, that 
$$\rho_\gamma\in \begin{cases}
(L^1\cap L^\ii)(\R)&\text{when $d=1$,}\\
L^q(\R^2) &\text{for all $1\leq q<\ii$ when $d=2$,}\\
(L^1\cap L^{\frac{d}{d-2}})(\R^d)&\text{when $d\geq3$.}
\end{cases}$$ 
In what follows, we assume that $\| \gamma \| \leq 1$, and we introduce the NLS energy of any such operator $\gamma$ by
\begin{equation}
\boxed{ \cE(\gamma):=\tr(-\Delta \gamma) -\frac1p\int_{\R^d}\rho_\gamma(x)^p\,\rd x.}
 \label{eq:def_energy_gamma}
\end{equation}
We use the same notation $\cE$ as we did for the case of one or $N$ functions, since we think that there cannot be any confusion. When $\gamma=|u\rangle\langle u|$ is a rank-one operator, we recover the usual NLS energy of one function. When $\gamma$ is an orthonormal projection as in~\eqref{eq:orthonormal_proj} we obtain the energy $\cE(u_1,...,u_N)$ introduced in~\eqref{eq:def_energy_u_j}. We introduce the minimisation problem
\begin{equation}
 \boxed{J(\lambda):=\inf\left\{\cE(\gamma)\ :\ 0\leq\gamma=\gamma^*\leq1,\ \tr(\gamma)=\lambda,\ \tr(- \Delta \gamma) <\ii\right\}.}
 \label{eq:def_J_lambda}
\end{equation}
The following well-known result in the spirit of~\cite{Lieb-81} states that $J(N)$ coincides with our previously defined problem for orthonormal functions. This is based on the important fact that the energy is concave in $\gamma$. In fact, using that the map $\gamma \mapsto \Tr(-\Delta \gamma)$ is linear we find
\begin{align*}
    \cE(t \gamma_0 + (1 - t) \gamma_1) = & t \cE(\gamma_0) + (1 - t) \cE(\gamma_1) \\
    & \quad - \frac{1}{p} \int_{\R^d} (t \rho_0 + (1 - t) \rho_1)^p - t \rho_0^p - (1 - t) \rho_1^p,
\end{align*}
where the last integrand is negative by convexity of $x \mapsto x^p$.
Hence, the map $\cE$ always attains its minimum on the extreme points of a convex set.

\begin{lemma}\label{lem:gamma_wants_to_have_finite_rank}
Let $d\geq1$ and $1<p<1+2/d$. The infimum $J(\lambda)$ defined in~\eqref{eq:def_J_lambda} is finite for every $\lambda>0$. Let $N$ be the smallest integer such that $N\geq\lambda$. Then we have
\begin{multline}
J(\lambda)=\inf\bigg\{\cE(\gamma)\ :\ \gamma=\sum_{i=1}^{N-1}|u_i\rangle\langle u_i|+(\lambda-N+1)|u_{N}\rangle\langle u_{N}|,\\ 
(u_1,...,u_{N})\in H^1(\R^d)^{N}\ \text{is orthonormal in $L^2(\R^d)$}\bigg\}.
\label{eq:equality_finite_rank}
\end{multline}
In particular,
\begin{itemize}
 \item[$(i)$] for every integer $N \in \N$, $J(N)$ coincides with~\eqref{eq:def_J_integers},
 \item[$(ii)$] for every $0\leq\lambda\leq1$, we have $J(\lambda)=I(\lambda)$, given by the NLS formula~\eqref{eq:formula_I_lambda}.
\end{itemize}
\end{lemma}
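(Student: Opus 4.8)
The statement to prove is Lemma~\ref{lem:gamma_wants_to_have_finite_rank}: the relaxed problem $J(\lambda)$ over density matrices $0\le\gamma=\gamma^*\le 1$ with $\tr\gamma=\lambda$ is bounded below, and is attained in the "filling" form where $\gamma$ has $N-1$ full eigenvalues equal to $1$ and one eigenvalue equal to $\lambda-N+1$ (here $N=\lceil\lambda\rceil$), from which items $(i)$ and $(ii)$ follow. The plan is to exploit the concavity of $\gamma\mapsto\cE(\gamma)$ noted just before the statement: the kinetic term $\tr(-\Delta\gamma)$ is linear in $\gamma$, and $\gamma\mapsto -\frac1p\int\rho_\gamma^p$ is concave because $\gamma\mapsto\rho_\gamma$ is linear and $t\mapsto -t^p$ is concave on $\R_+$ (using $1<p$), so $\cE$ is concave on the convex set of admissible $\gamma$'s. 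A concave functional on a convex set attains its infimum at an extreme point, so the main step is to identify the extreme points of the constraint set $\{0\le\gamma\le 1,\ \tr\gamma=\lambda\}$.

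First I would establish the lower bound. Combine the Lieb--Thirring inequality~\eqref{eq:LT} (with $\|\gamma\|\le 1$) with the interpolation $\int\rho_\gamma^p\le \big(\int\rho_\gamma\big)^{\theta}\big(\int\rho_\gamma^{1+2/d}\big)^{1-\theta}$ for the appropriate $\theta\in(0,1)$ determined by $1<p<1+2/d$, using $\int\rho_\gamma=\lambda$; then Young's inequality absorbs the $\int\rho_\gamma^{1+2/d}$ factor into $c_{\rm LT}\int\rho_\gamma^{1+2/d}\le \tr(-\Delta\gamma)$, leaving $\cE(\gamma)\ge -C(d,p)\lambda^{1+\frac2d\frac{p-1}{1+2/d-p}}>-\ii$. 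This is the same computation underlying~\eqref{eq:energy_TF_LT}. Next, for the variational characterisation: the set $\cK_\lambda=\{0\le\gamma=\gamma^*\le 1:\tr\gamma=\lambda\}$ is convex, and a standard fact (see \cite{Lieb-81}) is that its extreme points are exactly the operators of the form $\sum_{i=1}^{N-1}|u_i\rangle\langle u_i|+(\lambda-N+1)|u_N\rangle\langle u_N|$ with $(u_i)$ orthonormal and $N=\lceil\lambda\rceil$ --- i.e. all occupation numbers are $0$ or $1$ except possibly one fractional number forced by the trace constraint. I would prove the nontrivial inclusion by contradiction: if $\gamma$ has two eigenvalues $n_i,n_j\in(0,1)$ (or fails to be a projection plus rank-one correction in some other way), then perturbing $\gamma\pm\epsilon(|u_i\rangle\langle u_i|-|u_j\rangle\langle u_j|)$ stays in $\cK_\lambda$ for small $\epsilon$, exhibiting $\gamma$ as a nontrivial convex combination; conversely the displayed form is extreme because no such perturbation preserves all three constraints. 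Since we also need the infimum to be \emph{attained} at an extreme point (not merely approached), I would invoke that the finite-rank reduction converts $J(\lambda)$ to a minimisation over the compact-like set of $N$-tuples, but more carefully: concavity gives that for \emph{any} admissible $\gamma$, diagonalising $\gamma=\sum n_i|u_i\rangle\langle u_i|$ and comparing with the two "extreme" rearrangements of its occupation numbers (pushing mass up to fill low modes) does not increase the energy, because along the segment between $\gamma$ and the filled configuration the density endpoints control $\cE$ by concavity --- this is the key inequality and should be done by a direct rearrangement-of-occupation-numbers argument rather than abstract Choquet theory.

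Finally, items $(i)$ and $(ii)$ are immediate corollaries. For $(i)$, when $\lambda=N\in\N$ the fractional occupation $\lambda-N+1$ equals $1$, so~\eqref{eq:equality_finite_rank} reduces to $\gamma=\sum_{i=1}^N|u_i\rangle\langle u_i|$ and $\cE(\gamma)=\cE(u_1,\dots,u_N)$, giving exactly~\eqref{eq:def_J_integers}. For $(ii)$, when $0\le\lambda\le 1$ we have $N=1$ and $\gamma=\lambda|u_1\rangle\langle u_1|$; writing $u=\sqrt{\lambda}\,u_1\in H^1$ with $\int|u|^2=\lambda$ gives $\cE(\gamma)=\int|\nabla u|^2-\frac1p\int|u|^{2p}=\cE(u)$, so $J(\lambda)=I(\lambda)$ with the value~\eqref{eq:formula_I_lambda}. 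The main obstacle I anticipate is the reduction-to-extreme-points step: one must be careful that the infimum over the relaxed set is genuinely realised by the filled configuration and that no regularity is lost (the $H^1$ requirement on each $u_i$), so the cleanest route is the explicit rearrangement-of-occupation-numbers comparison using concavity of $t\mapsto -t^p$, rather than a soft extreme-point argument which would only give an infimum over extreme points without the clean finite-rank form.
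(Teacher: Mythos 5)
Your overall strategy (concavity of $\cE$ in the occupation numbers, plus a pairwise variation pushing occupation numbers to $\{0,1\}$) is the same as the paper's, and your lower bound and the deduction of items $(i)$ and $(ii)$ match the paper. Note, by the way, that the lemma only asserts equality of two infima, so the question of the infimum being \emph{attained} at an extreme point, which you worry about, is not actually part of what has to be proved; the explicit rearrangement argument bypasses it entirely.

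However, there are two genuine gaps in the central step. First, a general admissible $\gamma$ in~\eqref{eq:def_J_lambda} may have infinitely many eigenvalues in $(0,1)$, so the rearrangement-of-occupation-numbers induction never terminates by itself. The paper therefore first proves a finite-rank reduction: it approximates $\gamma=\sum_{i\geq1}n_i|u_i\rangle\langle u_i|$ by $\gamma_K=\sum_{i\leq K}n_i|u_i\rangle\langle u_i|+\big(\sum_{i>K}n_i\big)|u_{K+1}\rangle\langle u_{K+1}|$ and shows $\cE(\gamma_K)\to\cE(\gamma)$, using trace-norm convergence to get $\rho_{\gamma_K}\to\rho_\gamma$ in $L^1$ and the Lieb--Thirring bound~\eqref{eq:LT} to upgrade this to $L^p$. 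You only say you would ``invoke the finite-rank reduction''; this is itself one of the two main steps and requires an argument of this kind. Second, your ``key inequality'' as stated --- comparing $\gamma$ with the filled configuration ``along the segment between $\gamma$ and the filled configuration'' --- does not follow from concavity: concavity bounds the energy at \emph{interior} points of a segment from below by the minimum of the endpoint energies, so with $\gamma$ itself an endpoint you learn nothing about the filled configuration, and in general you cannot extend the segment past $\gamma$ inside the constraint set (modes where $n_i\in\{0,1\}$ block the backward direction). The correct argument, which you do sketch earlier when identifying extreme points, is the two-sided perturbation of a single pair of fractional occupation numbers, $\gamma+t\big(|u_i\rangle\langle u_i|-|u_j\rangle\langle u_j|\big)$ with $t$ ranging over an interval containing $0$ in its interior, so that $\gamma$ is an interior point and one endpoint has energy no larger than $\cE(\gamma)$; this is then iterated finitely many times --- which is precisely why the finite-rank reduction must come first.
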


Note that Equation~\eqref{eq:equality_finite_rank} is in fact true for all $p>1$, even in the case $J(\lambda) = - \infty$.

\begin{proof}
From the Lieb-Thirring inequality~\eqref{eq:LT} and Lemma~\ref{lem:LTminimisation} we have 
\begin{equation}
J(\lambda)\geq\inf_{\substack{\rho\geq0\\ \int_{\R^d}\rho=\lambda}}\int_{\R^d}\left(c_{\rm LT}(d)\rho(x)^{1+\frac2d}-\frac1p\rho(x)^p\right)\,\rd x
=   e_{\rm LT}(d, p)\,\lambda,
\label{eq:use_LT_lower_bound}
\end{equation}
where $e_{\rm LT}(d,p)$ is given by~\eqref{eq:e_LT}. In particular, $J(\lambda)$ is finite for all $\lambda\geq0$. 

Let us prove that we can restrict the minimisation problem to finite rank operators. Let $\gamma=\sum_{i\geq1}n_i|u_i\rangle\langle u_i|$ be any admissible operator of infinite rank. Upon relabelling of the indices, we may assume $0 < n_1  < 1$. Set $0 < \varepsilon < 1 - n_1$, and $K_0 \in \N$ so that $\sum_{i \ge K_0+1} n_i < \varepsilon$. Then, for all $K \ge K_0$, the operator
$$\gamma_K:= \left( n_1 + \sum_{i \ge K+1} n_i  \right) |u_{1}\rangle\langle u_{1}|  + \sum_{i=2}^Kn_i|u_i\rangle\langle u_i|$$
is also admissible and converges to $\gamma$ when $K\to\ii$, in the trace norm. We even have 
$$\tr(-\Delta)\gamma_K=\left( n_1 + \sum_{i \ge K+1} n_i  \right) \int_{\R^d}|\nabla u_{1}|^2+ \sum_{i=2}^Kn_i\int_{\R^d}|\nabla u_i|^2\underset{K\to\ii}{\longrightarrow}\tr(-\Delta\gamma).$$
The trace-class convergence implies $\rho_K\to\rho$ in $L^1(\R^d)$ and since $\rho_K$ is bounded in $L^{1+2/d}(\R^d)$ by the Lieb-Thirring inequality~\eqref{eq:LT}, we have $\rho_K\to\rho$ in $L^p(\R^d)$. Hence $\cE(\gamma_K)\to\cE(\gamma)$ and the infimum can be restricted to finite-rank operators, as claimed.

Let now $\gamma=\sum_{i \ge 1} n_i|u_i\rangle\langle u_i|$ be any finite-rank admissible operator with $0 \le n_i \le 1$ and $\sum_{i \ge 1} n_i = \lambda$, and assume that there are two $n_i$, say $n_1$ and $n_2$, which belong to the open interval $(0,1)$. Consider the new operator obtained by varying these two occupation numbers
$$\tilde\gamma_t:=\gamma+t\Big(|u_1\rangle\langle u_1|-|u_2\rangle\langle u_2|\Big),$$
which is admissible as soon as $\max(-n_1,n_2-1)\leq t\leq\min(1-n_1,n_2)$. As we have explained before the lemma, the energy of $\tilde\gamma_t$ is concave in $t$, hence the minimum over $t$ must be attained at the boundary of the interval, where one of the two occupation numbers of $\tilde\gamma_t$ is equal to either 0 or 1. The new operator $\tilde\gamma_t$ has an energy which is lower than or equal to that of $\gamma$ and it has at least one occupation number in the open interval $(0,1)$ less. Arguing by induction we can therefore find an operator $\gamma'$ of the form in~\eqref{eq:equality_finite_rank} so that $\cE(\gamma')\leq\cE(\gamma)$. So the minimisation problem can be restricted to such $\gamma$'s. 

If $\lambda=N$ is an integer, the minimisation set contains only orthogonal projections, and we recover the minimisation problem introduced in~\eqref{eq:def_J_integers}. If $0<\lambda\leq1$ then we can take $\gamma=\lambda|u\rangle\langle u|$. Its energy equals the NLS energy of $\sqrt{\lambda} u$. Therefore the optimum is for $u=Q_\lambda/\sqrt{\lambda}$ and the minimal energy is equal to $I(\lambda)$. If $\lambda=0$ then the set is reduced to $\gamma=0$ and we find $J(0)=0$. 
\end{proof}

\subsection{Binding inequalities and existence of a minimiser}

Here are some standard observations.

\begin{lemma}[Properties of $J(\lambda$] \label{lem:weakBinding}
Let $d\geq1$ and $1 < p < 1 + 2/d$. Then
    \begin{enumerate}
        \item[(i)] \textbf{(lower bound)} We have 
        \[
        e_{\rm LT}(d,p)\, \lambda \le  J(\lambda) < 0
        \]
        for all $\lambda > 0$, where $e_{\rm LT}(d,p)$ is the constant in~\eqref{eq:e_LT}.

        \smallskip
        
        \item[(ii)] \textbf{(sub-additivity)} For all $0 \le \lambda' \le \lambda$, we have
        \[
            J(\lambda) \le J(\lambda') + J(\lambda - \lambda').\label{eq:weak_binding}
         \]
\item[(iii)] \textbf{(monotonicity and continuity)} The function $\lambda\in\R_+\mapsto J(\lambda)$ is decreasing and Lipschitz, hence continuous.

\smallskip

\item[(iv)] \textbf{(concavity)} It is concave on each interval $(N-1,N)$ with $N\in\N^*$. 
         \end{enumerate}
\end{lemma}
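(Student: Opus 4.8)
The plan is to prove each of the four items in turn, using only the definition \eqref{eq:def_J_lambda} of $J(\lambda)$ and the tools already recalled (Lieb-Thirring, Hoffmann-Ostenhof, and the NLS facts for $N=1$).

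\textbf{(i) Lower and upper bounds.} For the lower bound I would argue exactly as in \eqref{eq:energy_TF_LT}: for any admissible $\gamma$ with $\|\gamma\|\le 1$, the Lieb-Thirring inequality \eqref{eq:LT} gives $\tr(-\Delta\gamma)\ge c_{\rm LT}\int\rho_\gamma^{1+2/d}$, so $\cE(\gamma)\ge\int(c_{\rm LT}\rho_\gamma^{1+2/d}-\frac1p\rho_\gamma^p)$, and then Lemma~\ref{lem:LTminimisation} applied with $C=c_{\rm LT}$ and total mass $\lambda$ bounds this below by $-C\lambda$ with $C$ depending only on $d,p$. For the strict upper bound $J(\lambda)<0$: take a trial $\gamma=\lambda\,|u\rangle\langle u|/\|u\|^2$ (a rank-one operator, admissible since $\lambda\|\gamma\|=\lambda\le\lambda$... more precisely, if $\lambda\le 1$ take $\gamma=\lambda|u\rangle\langle u|$; if $\lambda>1$ use $\gamma$ built from a finite orthonormal family as in \eqref{eq:equality_finite_rank}, or simply note $J(\lambda)\le J(\lceil\lambda\rceil-1)+J(\lambda-\lceil\lambda\rceil+1)$ via (ii) and reduce to $\lambda\le1$). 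In the case $\lambda\le1$, $\cE(\gamma)=\lambda\|\nabla u\|^2-\frac{\lambda^p}{p}\int|u|^{2p}$ after normalising $\|u\|=1$; replacing $u$ by $u_\sigma=\sigma^{d/2}u(\sigma\cdot)$ gives $\cE=\lambda\sigma^2\|\nabla u\|^2-\frac{\lambda^p}{p}\sigma^{d(p-1)}\int|u|^{2p}$, and since $d(p-1)<2$ this is negative for $\sigma$ small. Hence $J(\lambda)<0$ for all $\lambda>0$.

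\textbf{(ii) Sub-additivity.} Given $\eps>0$, pick near-optimisers $\gamma_1$ for $J(\lambda')$ and $\gamma_2$ for $J(\lambda-\lambda')$. Translate $\gamma_2$ far away, $\gamma_2^\tau:=U_\tau\gamma_2 U_\tau^*$ with $U_\tau$ the translation by $\tau$, and set $\gamma:=\gamma_1+\gamma_2^\tau$. One checks $0\le\gamma$, $\tr\gamma=\lambda$, and (since the densities of $\gamma_1$ and $\gamma_2^\tau$ become nearly disjointly supported after mollification/truncation, and the operator norm of a sum of two positive operators with essentially orthogonal ranges stays $\le 1$ in the limit) $\|\gamma\|\le 1$ up to an arbitrarily small error. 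The kinetic energy is additive, $\tr(-\Delta\gamma)=\tr(-\Delta\gamma_1)+\tr(-\Delta\gamma_2)$, while $\int\rho_\gamma^p=\int(\rho_{\gamma_1}+\rho_{\gamma_2^\tau})^p\to\int\rho_{\gamma_1}^p+\int\rho_{\gamma_2}^p$ as $|\tau|\to\ii$ by dominated convergence (the cross terms vanish because each density is integrable and in $L^p$). Thus $J(\lambda)\le\cE(\gamma_1)+\cE(\gamma_2)+o(1)\le J(\lambda')+J(\lambda-\lambda')+2\eps+o(1)$, and letting $\tau$ and then $\eps$ go we conclude. A mild technical point: to keep $\|\gamma\|\le1$ exactly one may instead argue with a trial operator obtained by filling occupation numbers as in Lemma~\ref{lem:gamma_wants_to_have_finite_rank}; this is routine.

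\textbf{(iii) Monotonicity and continuity.} Monotonicity (decreasing) follows from (ii) together with (i): for $\lambda<\mu$, $J(\mu)\le J(\lambda)+J(\mu-\lambda)<J(\lambda)$ since $J(\mu-\lambda)<0$ by (i). For continuity, I would show local Lipschitz continuity: taking a near-minimiser $\gamma$ for $J(\lambda)$ diagonalised as $\sum n_i|u_i\rangle\langle u_i|$, one perturbs the occupation numbers (scale $\gamma$ by the factor $\mu/\lambda$ when $\mu<\lambda$, or add a small rank-one piece far away when $\mu>\lambda$) to produce an admissible competitor for $J(\mu)$ whose energy differs from $\cE(\gamma)$ by $O(|\mu-\lambda|)$ uniformly on compact $\lambda$-intervals; combined with the same argument reversing the roles of $\lambda$ and $\mu$ this gives $|J(\lambda)-J(\mu)|\le C(R)|\lambda-\mu|$ for $\lambda,\mu\in[0,R]$. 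Here the bound (i) is used to control the energy of the added/removed piece.

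\textbf{(iv) Concavity on $(N-1,N)$.} This is where the identity \eqref{eq:equality_finite_rank} from Lemma~\ref{lem:gamma_wants_to_have_finite_rank} is the crucial input, and I expect it to be the main obstacle to state cleanly. On $(N-1,N)$, the lemma says $J(\lambda)=\inf_{(u_i)}\cE\big(\sum_{i=1}^{N-1}|u_i\rangle\langle u_i|+(\lambda-N+1)|u_N\rangle\langle u_N|\big)$ over orthonormal $(u_1,\dots,u_N)$. For fixed $(u_i)$, writing $t=\lambda-N+1\in(0,1)$ and $\rho=\rho_0+t|u_N|^2$ with $\rho_0=\sum_{i<N}|u_i|^2$, the map $t\mapsto\tr(-\Delta\gamma)$ is affine and $t\mapsto-\frac1p\int(\rho_0+t|u_N|^2)^p$ is concave (since $s\mapsto s^p$ is convex for $p>1$, its negative composed with the affine map $t\mapsto\rho_0+t|u_N|^2$ is concave, and concavity is preserved under integration). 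Hence $\lambda\mapsto\cE(\gamma)$ is concave for each fixed configuration, and an infimum of concave functions is concave. That gives concavity of $J$ on $(N-1,N)$. The only subtlety is that \eqref{eq:equality_finite_rank} is stated for the specific $N=\lceil\lambda\rceil$; for $\lambda$ in the open interval $(N-1,N)$ this $N$ is constant, so the representation is uniform across the interval and the argument goes through verbatim.
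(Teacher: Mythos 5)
Your proposal is correct in substance and follows essentially the same route as the paper for (i) (Lieb--Thirring plus a dilation/scaling trial state), for monotonicity in (iii), and for (iv), where your ``infimum over fixed orthonormal configurations of a function that is affine in $t$ for the kinetic part and concave in $t$ for the $-\frac1p\int\rho^p$ part'' is just a repackaging of the paper's argument $\gamma=t\gamma_1+(1-t)\gamma_2$ combined with concavity of $\cE$; the paper leaves continuity to the reader, and your Lipschitz sketch is fine provided you state the uniform kinetic-energy bound on near-minimisers (obtained from Lieb--Thirring, H\"older and $\cE(\gamma)\le J(\lambda)+1$), which is what actually controls the $O(|\mu-\lambda|)$ error when you rescale the occupation numbers.

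The one point to tighten is (ii). The operator $\gamma=\gamma_1+\gamma_2^\tau$ is in general \emph{not} admissible: the constraint is $\gamma\le 1$, and the sum of two positive contractions with overlapping ranges can have norm up to $2$, so ``$\|\gamma\|\le1$ up to an arbitrarily small error'' does not produce a competitor for $J(\lambda)$, and the constraint cannot be relaxed without an extra argument. Your suggested repair via Lemma~\ref{lem:gamma_wants_to_have_finite_rank} is not the right tool either: that lemma reshapes the occupation numbers of an operator that is already admissible; it does not restore $\gamma\le1$ for a non-admissible sum. Two correct fixes, both present in the paper: either approximate the $u_i,v_j$ by compactly supported functions (density in $H^1$), after which the two blocks are exactly orthogonal for $|\tau|$ large and the sum is a genuine projection-plus-partial-occupation competitor; or, as the paper does, keep general $H^1$ functions and orthonormalise the combined family through the Gram matrix $G_\tau^{-1/2}$, which yields an exactly admissible $\gamma^{(\tau)}$ whose energy converges to $\cE(\gamma_1)+\cE(\gamma_2)$ since $G_\tau\to I_{N+M}$. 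The paper's choice of the Gram-matrix construction is not just a technicality: the same orthonormalisation is the starting point of the quantitative binding estimate in Proposition~\ref{lem:strongBinding}, so it pays to set it up here.
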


\begin{proof}
We have already seen the inequality of \textit{(i)} in~\eqref{eq:use_LT_lower_bound}. We now show that $J(\lambda)$ is negative for $\lambda>0$. Let $\gamma$ be any admissible operator for $J(\lambda)$ and set $\gamma_a=U_a\gamma U_a^*$ where $U_a$ is the dilation unitary operator by the scaling factor $a$, that is, in terms of operator kernels $\gamma_a(x, y) := a^d \gamma(a x, a y)$. We then have $0 \le \gamma_a \le 1$, $\tr(\gamma_a) = \lambda$, and
\begin{equation}
 \cE(\gamma_a) = a^2 \tr(-\Delta \gamma) - \frac{a^{d(p-1)}}{p} \int_{\R^d} \rho_{\gamma}^{p},
 \label{eq:rescaled}
 \end{equation}
    which is negative for $a$ small enough since $d(p-1)<2$.
   
    We turn to the proof of \textit{(ii)}. Take any two operators of the special form in~\eqref{eq:equality_finite_rank}
    $$\gamma_1=\sum_{i=1}^{N-1}|u_i\rangle\langle u_i|+(\lambda'-N+1)|u_{N}\rangle\langle u_{N}|,
    $$
    and
    $$ \gamma_2=\sum_{j=1}^{M-1}|v_j\rangle\langle v_j|+(\lambda-\lambda'-M+1)|v_{M}\rangle\langle v_{M}|$$
with respectively $\lambda'$ and $\lambda - \lambda'$ particles. We then place $\gamma_2$ far away, at a distance $R\gg1$ along the first axis $e_1=(1,0,...,0)$. This can be done by first translating all the functions $v_j$ into $v_j(\cdot-Re_1)$ but then we have to orthonormalise the functions $u_1,\cdots,u_N,v_1(\cdot-Re_1),\cdots,v_M(\cdot-Re_1)$. Although we could have chosen the $u_j$ and $v_j$ with compact support by a density argument, we use here a different reasoning which will be useful later on. We consider the Gram matrix $S_R$ of the family $u_1,\cdots,u_N, v_{1}(\cdot - Re_1),\cdots ,v_M(\cdot - Re_1)$, namely
$$S_R=\begin{pmatrix}
I_N & E^R\\
(E^R)^*&I_M
\end{pmatrix},\qquad E^R_{ij}=\pscal{u_i,v_j(\cdot-Re_1)}.
$$
Since $S_R$ is a Gram matrix, it is hermitian and positive. For $R$ large enough, it is invertible, and we can set
\begin{align*}
& \begin{pmatrix}u_1^{(R)} &  \cdots & u_N^{(R)} & v_1^{(R)} &  \cdots & v^{(R)}_M\end{pmatrix} :=   \\
& \qquad \begin{pmatrix}u_1 & \cdots &  u_N & v_1(\cdot-Re_1) & \cdots & v_M(\cdot-Re_1)\end{pmatrix} (S_R)^{-\frac12}.
\end{align*}
This family forms an $(N+M)$--orthonormal system. This follows from the classical fact that if $\Phi := (\phi_1, \cdots, \phi_L)$ is an independent family of $L$ vectors, then the $L \times L$ matrix $S$ with coefficients $S_{ij} := \bra \phi_i, \phi_j \ket$ is hermitian positive definite, and the family $\Psi := (\psi_1, \cdots, \psi_L)$ defined by
\[
    \Psi := \Phi S^{-1/2}, \quad \text{that is} \quad \psi_i := \sum_{k =1}^L \phi_k (S^{-1/2})_{ki}
\] 
is orthonormal, since
\[
    \bra \psi_i, \psi_j \ket = \sum_{k,l=1}^L (S^{-1/2})_{ik}  (S^{-1/2})_{lj} \underbrace{\bra \phi_k, \phi_l \ket}_{=S_{kl}} = \left(S^{-1/2} S S^{-1/2}\right)_{ij} = \delta_{ij}.
\]
We then introduce the admissible operator
    $$\gamma^{(R)}=\sum_{i=1}^{N}|u^{(R)}_i\rangle\langle u^{(R)}_i|+(\lambda'-N)|u^{(R)}_{N}\rangle\langle u^{(R)}_{N}|+\sum_{j=1}^{M}|v^{(R)}_j\rangle\langle v^{(R)}_j|+(\lambda-\lambda'-M)|v^{(R)}_{M}\rangle\langle v^{(R)}_{M}|$$
    which has the required trace $\tr\big(\gamma^{(R)}\big)=\lambda$. Since the matrix $S_R$ tends to $I_{N+M}$ when $R\to\ii$, we have 
$$
\norm{u^{(R)}_i-u_i}_{H^1(\R^d)}\to0,\qquad \norm{v^{(R)}_j-v_j(\cdot-Re_1)}_{H^1(\R^d)}\to0.
$$
This already proves that
\[
    \Tr(-\Delta \gamma^{(R)}) - \Tr(-\Delta \gamma_1) - \Tr(- \Delta \gamma_2) \xrightarrow[R \to \infty]{} 0.
\]
Similarly, we have $\rho^{(R)}-\rho_1-\rho_2(\cdot-Re_1)\to0$ in $L^1(\R^d)$ and since the three functions are bounded in $L^{1+2/d}(\R^d)$, we deduce by H\"older's inequality that the same holds in $L^p(\R^d)$. For the nonlinear term, we thus have
\[
    \lim_{R \to \infty} \int_{\R^d} (\rho^{(R)})^p  = 
    \lim_{R \to \infty} \int_{\R^d} \left(\rho_1 + \rho_2(\cdot - Re_1) \right)^p =\int_{\R^d}\rho_1^p  + \rho_2^p,
\]
where the last equality can be proved using the density of $C^\infty_c(\R^d)$ in $L^p(\R^d)$. This proves that 
$$\lim_{|R|\to\ii}\cE\big(\gamma^{(R)}\big)=\cE(\gamma_1)+\cE(\gamma_2).$$
Hence $J(\lambda)\leq \cE(\gamma_1)+\cE(\gamma_2)$. After optimising over $\gamma_1$ and $\gamma_2$ using~\eqref{eq:equality_finite_rank} we conclude that $J(\lambda)\leq J(\lambda')+J(\lambda-\lambda')$. 
    
    The monotonicity in \textit{(iii)} follows from the fact that $J(\lambda-\lambda')<0$ by \textit{(i)}, hence $J(\lambda)<J(\lambda')$ for $0<\lambda'<\lambda$. 
    
Let $\lambda,\lambda'\geq0$ and let us prove that $|J(\lambda)-J(\lambda')|\leq C|\lambda-\lambda'|$, that is, $J$ is Lipschitz. If either $\lambda$ or $\lambda'$ vanishes, this is \textit{(i)}. Hence, without loss of generality we may assume that $0< \lambda'\leq \lambda$. Then from the monotonicity of $J$ we have $J(\lambda)\leq J(\lambda')<0$. To get a bound in the other direction we take any trial state $\gamma$ with trace $\tr(\gamma)=\lambda$ and let $\gamma'=\frac{\lambda'}{\lambda}\gamma$. We have 
$$J(\lambda')\leq \cE(\gamma')=\cE\left(\frac{\lambda'}{\lambda}\gamma\right)=\cE(\gamma)-\frac{\lambda-\lambda'}{\lambda}\tr(-\Delta\gamma)+\frac1p\left(1-\left(\frac{\lambda'}{\lambda}\right)^p\right)\int_{\R^d}\rho_\gamma^p.$$
Using $(1 - (1 - x)^p) \le p x$ we deduce that 
\begin{equation}
J(\lambda')\leq \cE(\gamma)-\frac{\lambda-\lambda'}{\lambda}\left(\tr(-\Delta\gamma)-\int_{\R^d}\rho_\gamma^p\right).
\label{eq:estim_Lipschitz1}
\end{equation}
The last term takes the same form as our original problem but with no $1/p$ in front of the nonlinear term. Using the same argument as in \textit{(i)} based on the Lieb-Thirring inequality~\eqref{eq:LT} we have
$$\inf_{\tr(\gamma)=\lambda}\left(\tr(-\Delta\gamma)-\int_{\R^d}\rho_\gamma^p\right)\geq -C\lambda.$$
We have thus shown that 
$$J(\lambda')\leq \cE(\gamma)+C(\lambda-\lambda'),$$
for a constant $C$ depending only on $p$ and $d$. Thus we obtain 
$$J(\lambda)\leq J(\lambda')\leq J(\lambda)+C(\lambda-\lambda')$$
after optimising over $\gamma$.
    
Finally we prove concavity on each interval $(N-1,N)$ with $N\in\N$. Let $N-1\le \lambda_1 < \lambda < \lambda_2 \le N$, and let $t \in (0, 1)$ be such that $\lambda=t\lambda_1+(1-t)\lambda_2$. For any admissible $\gamma$ of the form in~\eqref{eq:equality_finite_rank} with $\tr(\gamma)=\lambda$, we can write $\gamma=t\gamma_1+(1-t)\gamma_2$ with 
$$\gamma_{1,2}=\sum_{i=1}^{N-1}|u_i\rangle\langle u_i|+(\lambda_{1,2}-N+1)|u_{N}\rangle\langle u_{N}|.$$
Since $\cE$ is concave, this implies
$\cE(\gamma)\geq t\cE(\gamma_1)+(1-t)\cE(\gamma_2)\geq tJ(\lambda_1)+(1-t)J(\lambda_2)$. Minimising over $\gamma$ yields the desired concavity. 
\end{proof}

\begin{remark}[Concavity on $\R_+$]\label{rmk:concavity}
The concavity over $\R_+$ is not expected to hold in general. 
Intuitively, for $\lambda\in(N-1,N)$ the derivative $J'(\lambda)$ should be equal to the last (partially) filled eigenvalue $\mu_N$ (see~\eqref{eq:estim_derivative_one_side} below for a one-sided estimate). However at $\lambda=N\in\N$ we expect that $J'(N)_-=\mu_N$ whereas $J'(N)_+=\mu_{N+1}$ which respectively correspond to the last filled eigenvalue when we decrease the mass or to the next eigenvalue to be filled when we increase it. Since $\mu_{N+1}\geq\mu_N$, $J$ is not expected to be concave except when $\mu_N=\mu_{N+1}$. 

A numerical computation in dimension $d=1$ in Figure~\ref{fig:J_over_lambda} below confirms that $J$ is not concave over $\R_+$. Also $\lambda\mapsto J(\lambda)/\lambda$ is not decreasing, except when restricted to integers.  
\end{remark}

\begin{figure}
\includegraphics[width=6.5cm]{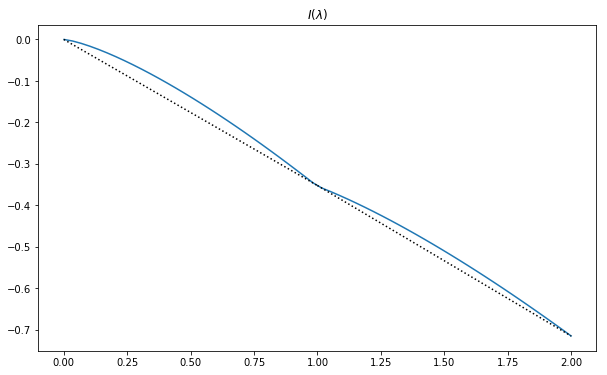}\includegraphics[width=6.5cm]{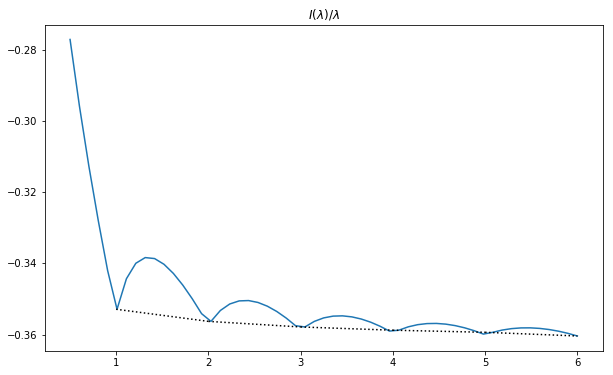}
\caption{Numerical computation of the functions $\lambda\mapsto J(\lambda)$ \emph{(left)} and  $\lambda\mapsto J(\lambda)/\lambda$ \emph{(right)} in dimension $d=1$ with $p=1.3$. The function has the behaviour described in Remark~\ref{rmk:concavity}.
\label{fig:J_over_lambda}}
\end{figure}

The next theorem follows from using the concentration-compactness technique for operators~\cite{Friesecke-03,FraLieSeiSie-07,Lewin-11,LenLew-10} and from the concavity of $J(\lambda)$ on each $(N-1,N)$. It also follows from a profile decomposition similar to that used in~\cite{HonKwoYoo-19}. We will not write the proof in this paper, since the result is in fact also contained in~\cite[Thm. 27]{Lewin-11} (with $W = 0$ there) using Remark~\ref{rmk:N-particle} about the link with the $N$-particle problem.

\begin{theorem}[Existence under the binding condition]\label{th:Lewin}
    Let $d\geq1$ and $1<p<1+2/d$.  Let $N \in \N$ be such that the following binding inequalities hold:
    \begin{equation} \label{eq:strong_binding}
         \forall k = 1, \cdots, N - 1, \quad J(N) < J(k) + J(N-k).
    \end{equation}
    Then, the problem $J(N)$ has a minimiser.
\end{theorem}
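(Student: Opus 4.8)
The plan is to establish existence of a minimiser for $J(N)$ under the binding inequalities~\eqref{eq:strong_binding} via the concentration-compactness method, working at the level of density matrices as set up in the previous subsection. First I would take a minimising sequence $(\gamma_n)$ of admissible operators, so $0 \le \gamma_n = \gamma_n^* \le 1$, $\tr(\gamma_n) = N$, $\cE(\gamma_n) \to J(N)$. By Lemma~\ref{lem:weakBinding}(i) the sequence is bounded below in energy, and from $\cE(\gamma_n) \to J(N) < 0$ together with the Lieb-Thirring inequality~\eqref{eq:LT} one controls $\tr(-\Delta\gamma_n)$ and hence obtains a bound on $\rho_{\gamma_n}$ in $L^{1+2/d} \cap L^1$, which (with Hoffmann-Ostenhof~\eqref{eq:Hoffmann-Ostenhof} and Sobolev) gives a uniform $H^1$-type bound on $\sqrt{\rho_{\gamma_n}}$ and a bound on $\gamma_n$ in the natural Sobolev-Schatten space. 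By weak-$*$ compactness, up to a subsequence $\gamma_n \rightharpoonup \gamma$ for some admissible $\gamma$ with $\tr(\gamma) =: M \le N$, and $\rho_{\gamma_n} \to \rho_\gamma$ locally in the relevant $L^q$ spaces.

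The core of the argument is then to run the concentration-compactness alternative on the sequence of densities $\rho_{\gamma_n}$ (mass $N$): either \emph{compactness} (up to translations $\rho_{\gamma_n}(\cdot - \tau_n)$ is tight, so no mass escapes to infinity), \emph{vanishing} (which is excluded because it forces $\int \rho_{\gamma_n}^p \to 0$ and hence $\liminf \cE(\gamma_n) \ge 0 > J(N)$, a contradiction), or \emph{dichotomy}, in which the system splits into two pieces carrying masses $m$ and $N-m$ with $0 < m < N$ that drift infinitely far apart. In the dichotomy case I would cut $\gamma_n$ with a smooth partition of unity $\chi_R(\cdot - z_n)^2 + \eta_R(\cdot - z_n)^2 = 1$ at a well-chosen scale, use the IMS-type localisation formula $\tr(-\Delta \gamma_n) \ge \tr(-\Delta \chi_R\gamma_n\chi_R) + \tr(-\Delta \eta_R\gamma_n\eta_R) - o(1)$ together with the fact that the nonlinear term $\int \rho^p$ almost decouples across a large separating region, to conclude $J(N) \ge J(m) + J(N-m)$ in the limit — but the localised operators may not have integer traces, so one invokes Lemma~\ref{lem:weakBinding}(iii)–(iv) (continuity and piecewise concavity) to compare $J(m) + J(N-m)$ with $\min_{1 \le k \le N-1}\big(J(k) + J(N-k)\big)$, and this contradicts the strict binding inequality~\eqref{eq:strong_binding}. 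Hence only compactness survives, and after translating we have $\rho_{\gamma_n} \to \rho_\gamma$ strongly in $L^1 \cap L^{1+2/d}$ with $\tr(\gamma) = N$; weak lower semicontinuity of $\tr(-\Delta\,\cdot)$ and strong convergence of the density then give $\cE(\gamma) \le \liminf \cE(\gamma_n) = J(N)$, so $\gamma$ is a minimiser.

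There is an additional subtlety, flagged by the remark about piecewise concavity: even after proving compactness one must ensure the limiting $\gamma$ can be taken of the special finite-rank form in~\eqref{eq:equality_finite_rank}, i.e. an orthogonal projection since $N$ is an integer. This follows from Lemma~\ref{lem:gamma_wants_to_have_finite_rank}: replacing $\gamma$ by the operator obtained from pushing occupation numbers to $\{0,1\}$ does not increase the energy, and since $\tr(\gamma)=N$ the resulting operator is a rank-$N$ projection, so the minimiser is genuinely of the orthonormal-functions form. The statement as given only asserts existence of a minimiser of the density-matrix problem $J(N)$, which by Lemma~\ref{lem:gamma_wants_to_have_finite_rank}(i) is the same number as the orthonormal-functions problem, so this last step is what ties everything back to~\eqref{eq:def_J_integers}.

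The main obstacle, and the reason the authors choose to cite~\cite[Thm.~27]{Lewin-11} rather than reproduce the proof, is the careful execution of the dichotomy step at the operator level: localising a density matrix $\gamma_n$ with cutoffs while keeping simultaneous control of the kinetic energy (the cross terms in $\tr(-\Delta\chi_R\gamma_n\chi_R)$), the trace, and the convergence of $\rho_{\gamma_n}$ requires the full strength of the concentration-compactness machinery for operators developed in~\cite{Friesecke-03,FraLieSeiSie-07,Lewin-11}, including handling the bookkeeping of non-integer localised masses via the continuity and concavity properties in Lemma~\ref{lem:weakBinding}. Everything else — excluding vanishing, weak lower semicontinuity, and the final comparison with the binding hypothesis — is comparatively routine once that framework is in place.
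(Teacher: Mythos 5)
Your sketch takes essentially the same route as the paper, which simply cites~\cite[Thm.~27]{Lewin-11} rather than reproducing the operator-level concentration-compactness argument: you correctly identify that the only point needing attention beyond that machinery is that dichotomy produces non-integer masses, and you resolve it exactly as the paper does, by using the continuity and piecewise concavity of Lemma~\ref{lem:weakBinding} to reduce the continuous binding inequalities to the discrete ones in~\eqref{eq:strong_binding}. So the proposal is correct and matches the intended proof.
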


In usual concentration compactness theory, one requires the continuous binding inequality
\begin{equation*}
   \forall 0 < \lambda' < N, \quad J(N) < J(\lambda') + J(N-\lambda').
\end{equation*}
These inequalities automatically follow from the integer case~\eqref{eq:strong_binding} because the function $\lambda'\mapsto J(\lambda')+J(N-\lambda')$ is concave over each interval $(k,k+1)$ (Lemma~\ref{lem:weakBinding}), hence its minimum is attained either at $\lambda'=k$ or at $\lambda'=k+1$. 

\begin{remark}[Non-integer case] \label{rem:binding_noninteger}
Let $N\in\N$ and $\alpha\in(0,1)$ be such that 
\begin{equation*} 
        \forall k=1,\cdots,N, \quad J(N+\alpha) <  J(k) + J(N-k+\alpha).
    \end{equation*}
By the concentration-compactness method one can conclude that $J(N+\alpha)$ has a minimiser. Note that $k=N$ is included in the above conditions. 
\end{remark}

\subsection{Properties of minimisers}

Here we state some general properties of minimisers, assuming they exist.

\begin{proposition}[Euler-Lagrange equations] \label{lem:Euler-Lagrange} 
    Let $d\geq1$ and $1<p<1+2/d$. 
    If $J(\lambda)$ admits minimisers then it possesses one which is of the form in~\eqref{eq:equality_finite_rank}, with (orthonormal) real-valued eigenfunctions:
    \begin{equation} \label{eq:sc_gamma}
        \gamma=\sum_{i=1}^{N-1}|u_i\rangle\langle u_i|+(\lambda-N+1)|u_{N}\rangle\langle u_{N}|,
    \end{equation}
    where $N$ is the smallest integer such that $N\geq\lambda$. 
    The $u_i$ are the $N$ first eigenfunctions of the operator $-\Delta-\rho_\gamma^{p-1}$, counted with multiplicity:
\begin{equation}
\left(-\Delta-\rho_\gamma^{p-1}\right)u_i=\mu_i\,u_i,\qquad i=1,...,N 
 \label{eq:fermionic_NLS_2}
\end{equation}
    with $\mu_1<\mu_2\leq\cdots \leq\mu_N<0$. 
    In particular, $-\Delta-\rho_\gamma^{p-1}$ has at least $N$ negative eigenvalues. The functions $u_i$ are real-analytic and tend to zero at infinity. Finally, we have the estimate
    \begin{equation}
    \frac{2p-d(p-1)}{2-d(p-1)}\frac{J(\lambda)}{\lambda}\leq \mu_N\leq J(1)(\lambda-N+1)^{\frac{2}{d}\frac{p-1}{1+\frac2d -p}}<0
    \label{eq:estim_mu_N}
    \end{equation}
    on the last filled eigenvalue.
\end{proposition}

Equation~\eqref{eq:sc_gamma} can also be written in the compact operator form
\[
    \gamma = \1(H_\gamma < \mu_N) + \delta, \quad \text{with} \quad H_\gamma := - \Delta + \rho_\gamma^{p-1},
\]
where $\delta$ is a self-adjoint operator on ${\rm Ker}(H_\gamma - \mu_N)$, which is the sum of a projector plus a rank-one operator.

In the proof of the proposition, we are going to use the following remark which follows from the concavity of $\cE$.

\begin{lemma}[A general inequality]\label{lem:inequality_concavity}
Let $d\geq1$ and $1<p<1+2/d$. 
Let $0\leq \gamma=\gamma^*\leq 1$ and $0\leq \gamma'=(\gamma')^*\leq 1$ be two admissible operators.
Then we have
\begin{equation}
 \cE(\gamma')\leq\cE(\gamma)+\tr \,H_\gamma(\gamma'-\gamma)
 \label{eq:relation_gamma_1_2}
\end{equation}
where 
$$H_\gamma:=-\Delta-\rho_{\gamma}^{p-1}$$
is self-adjoint on $H^2(\R^d)$ and the trace in~\eqref{eq:relation_gamma_1_2} is understood in the quadratic form sense. 
\end{lemma}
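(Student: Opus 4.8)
The plan is to expand both sides of~\eqref{eq:relation_gamma_1_2} and exploit the fact that the only genuinely nonlinear term in $\cE$ is $-\frac1p\int\rho^p$: since the kinetic energy $\gamma\mapsto\tr(-\Delta\gamma)$ and the density map $\gamma\mapsto\rho_\gamma$ are both linear in the operator argument, the inequality will reduce to the pointwise convexity of $t\mapsto t^p/p$ on $\R_+$ (equivalently, Young's inequality).

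Concretely, using linearity one writes
\[
\tr H_\gamma(\gamma'-\gamma)=\tr\big(-\Delta(\gamma'-\gamma)\big)-\int_{\R^d}\rho_\gamma^{p-1}\,(\rho_{\gamma'}-\rho_\gamma)\,\rd x,
\]
so that the kinetic contributions cancel and
\[
\cE(\gamma')-\cE(\gamma)-\tr H_\gamma(\gamma'-\gamma)=\int_{\R^d}\Big(\tfrac1p\rho_\gamma(x)^p-\tfrac1p\rho_{\gamma'}(x)^p-\rho_\gamma(x)^{p-1}\big(\rho_{\gamma'}(x)-\rho_\gamma(x)\big)\Big)\,\rd x.
\]
It therefore suffices to check that the integrand is $\leq0$ pointwise. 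Putting $a=\rho_\gamma(x)\geq0$ and $b=\rho_{\gamma'}(x)\geq0$, this is precisely the tangent-line bound $f(b)\geq f(a)+f'(a)(b-a)$ for the convex function $f(t)=t^p/p$ on $\R_+$ (equivalently $a^{p-1}b\leq \frac1{p'}a^p+\frac1p b^p$ with $p'=p/(p-1)$, by Young's inequality). Integrating over $\R^d$ gives~\eqref{eq:relation_gamma_1_2}.

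Finally one has to check that all quantities are finite, which is where the assumption $1<p<1+2/d$ enters. The kinetic difference $\tr(-\Delta(\gamma'-\gamma))$ makes sense because both $\tr(-\Delta\gamma)$ and $\tr(-\Delta\gamma')$ are finite by assumption. For the potential part, the Lieb--Thirring and Hoffmann--Ostenhof bounds recalled in~\eqref{eq:LT}--\eqref{eq:Hoffmann-Ostenhof} give $\rho_\gamma,\rho_{\gamma'}\in L^1\cap L^{1+2/d}(\R^d)\subset L^p(\R^d)$; hence $\rho_\gamma^{p-1}\in L^{p/(p-1)}(\R^d)$, and all three integrals $\int\rho_\gamma^p$, $\int\rho_{\gamma'}^p$, $\int\rho_\gamma^{p-1}\rho_{\gamma'}$ converge by H\"older's inequality. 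This simultaneously shows that $\tr H_\gamma(\gamma'-\gamma)$ is well-defined in the quadratic form sense, understood as the finite kinetic term $\tr(-\Delta(\gamma'-\gamma))$ minus the finite integral $\int\rho_\gamma^{p-1}(\rho_{\gamma'}-\rho_\gamma)$. There is no serious obstacle in this argument; the only point requiring (minor) care is exactly this integrability check.
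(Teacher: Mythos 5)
Your proof of the inequality itself is correct and is essentially the paper's argument: you expand $\cE(\gamma')-\cE(\gamma)-\tr H_\gamma(\gamma'-\gamma)$ using linearity of $\gamma\mapsto\tr(-\Delta\gamma)$ and $\gamma\mapsto\rho_\gamma$, so that the kinetic terms cancel and only the tangent-line (convexity) inequality for $t\mapsto t^p/p$ remains, integrated over $\R^d$; the paper writes exactly this identity and invokes the convexity of $x\mapsto x^p$. Your integrability check (Lieb--Thirring plus $\rho\in L^1$ giving $\rho_\gamma,\rho_{\gamma'}\in L^p$, then H\"older) is also fine for making sense of the trace in the quadratic-form sense, which is all the inequality needs.

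However, the lemma asserts more than the inequality: it states that $H_\gamma=-\Delta-\rho_\gamma^{p-1}$ is self-adjoint on $H^2(\R^d)$, and your proposal does not address this. It is not decorative: the self-adjointness and the identification $\sigma_{\rm ess}(H_\gamma)=[0,\infty)$ are used immediately afterwards in Proposition~\ref{lem:Euler-Lagrange}, where one works with the spectral projections $\1_{(-\infty,\mu_N)}(H_\gamma)$ and with the fact that $\mu_N<0=\min\sigma_{\rm ess}(H_\gamma)$. Moreover, the integrability you establish, $\rho_\gamma^{p-1}\in L^{p/(p-1)}(\R^d)$, is in general too weak for this purpose (e.g.\ in $d=1$ with $p$ close to $3$ one has $p/(p-1)$ close to $3/2$, which does not make the potential $-\Delta$-bounded on $H^2$). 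The paper instead uses the Hoffmann--Ostenhof inequality~\eqref{eq:Hoffmann-Ostenhof} together with the Sobolev embedding to show that $\rho_\gamma$ lies in much better $L^q$ spaces ($L^1\cap L^\infty$ for $d=1$, all $L^q$ with $q<\infty$ for $d=2$, $L^1\cap L^{d/(d-2)}$ for $d\geq3$), so that $\rho_\gamma^{p-1}\in L^r$ for some $r>\max(2,d/2)$, and then concludes by the Rellich--Kato and Weyl theorems. You should add this step (or an equivalent relative-boundedness argument) to have a complete proof of the statement as formulated.
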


\begin{proof}[Proof of Lemma~\ref{lem:inequality_concavity}]
The proof of~\eqref{eq:relation_gamma_1_2} follows from the equality
\begin{equation*}
\cE(\gamma')=\cE(\gamma)+\tr(-\Delta-\rho_{\gamma}^{p-1})(\gamma'-\gamma)-\frac1p\int_{\R^d}\Big(\rho_{\gamma'}^p-\rho_{\gamma}^p-p\rho_{\gamma}^{p-1}
(\rho_{\gamma'}-\rho_{\gamma})\Big),
\end{equation*}
and the fact that the last integrand is non-negative by the convexity of $x \mapsto x^p$. 

Let us prove that $H_\gamma$ is self-adjoint. From~\eqref{eq:Hoffmann-Ostenhof} we have $\rho_\gamma\in L^p(\R^d)$ for $1<p<\ii$ in dimensions $d=1,2$ and for $1<p<1+2/(d-2)$ in dimensions $d\geq3$. In particular, $\rho_\gamma^{p-1}\in L^r(\R^d)$ for all $1/(p-1)<r<\ii$ in dimensions $d=1,2$ and for
   $$\frac1{p-1}<r< \frac{1+\frac{2}{d-2}}{p-1},\qquad\text{where} \quad \frac{1+\frac{2}{d-2}}{p-1}>\frac{d}2\left(1+\frac{2}{d-2}\right)>\max\left(2,\frac{d}2\right)$$
   in dimensions $d\geq3$. From the Rellich-Kato and Weyl theorems~\cite{ReeSim4}, this shows that the operator $H_\gamma=-\Delta-\rho_\gamma^{p-1}$ is self-adjoint on $H^2(\R^d)$ and that its essential spectrum equals $[0,\ii)$.
\end{proof}

With Lemma~\ref{lem:inequality_concavity} at hand we can write the

\begin{proof}[Proof of Proposition~\ref{lem:Euler-Lagrange}]
We split the proof into several steps.
   
\subsubsection*{Step 1: Equation.}
   Let $\gamma$ be a minimiser for $J(\lambda)$. Defining the complex conjugate $\overline{\gamma} := \sum_{i} n_i | \overline{u_i} \ket \bra \overline{u_i} |$ which has the same density $\rho_{\overline\gamma}=\rho_\gamma$, we notice that
   $$\cE(\gamma)=\cE\left(\frac{\gamma+\overline\gamma}{2}\right),$$
   so we may assume that $\overline{\gamma}=\gamma$. The eigenfunctions of $\gamma$ can then be chosen real. 
   
Let $\gamma'$ be any other admissible operator of trace $\lambda=\tr(\gamma')$.  Using~\eqref{eq:relation_gamma_1_2} and the fact that $\cE(\gamma')\geq \cE(\gamma)$ we deduce that 
$$\tr\; H_\gamma(\gamma'-\gamma)\geq0.$$
In other words, $\gamma$ also solves the linear minimisation problem   
   \begin{equation}
   \inf_{\tr(\gamma')=\lambda}\tr \big(H_\gamma\gamma'\big).
    \label{eq:min_linear_pb}
   \end{equation}
   Minimisers of the linear problem~\eqref{eq:min_linear_pb} exist only when $H_{\gamma}$ has at least $N$ non-positive eigenvalues, and are all of the form 
   $$\gamma'=\1_{(-\ii,\mu_N)}\big(H_\gamma\big)+\delta$$
   with $0\leq \delta=\delta^*\leq \1_{\{\mu_N\}}\big(H_\gamma\big)$ and $\delta\neq0$. This is called the \emph{aufbau principle} in quantum chemistry. The eigenvalues are filled starting from the bottom and only the last eigenvalue can be partially filled. Our minimiser $\gamma$ must therefore be of this form. 
   Note that $H_\gamma$ is real since $\rho_\gamma$ is a real function, therefore $\1_{(-\ii,\mu_N)}(H_\gamma)$ is real as well. We conclude that $\delta=\overline\delta$.
   It remains to show that $\delta$ is a projection plus a rank-one operator. In the next step we prove that $\mu_N<0$, which already implies that $\delta$ must be finite rank.
   
 \subsubsection*{Step 2: Estimates on $\mu_N$.}  
   We first show that $\mu_N<0=\min\sigma_{\rm ess}(H_\gamma)$. We consider $\gamma'=\gamma-t|u_N\rangle\langle u_N|$, which is admissible for $0\leq t\leq (\lambda-N+1) \leq1$. From~\eqref{eq:relation_gamma_1_2} we have\footnote{This inequality also implies
       $J'(\lambda)_-=\lim_{t\to0^+}\frac{J(\lambda-t)-J(\lambda)}{-t}\geq \mu_N$, see Remark~\ref{rmk:concavity}.}
\begin{equation}
J(\lambda-t)\leq \cE(\gamma')\leq \cE(\gamma)-\mu_Nt=J(\lambda)-\mu_Nt.
\label{eq:estim_derivative_one_side}
\end{equation}
 Using $J(\lambda-t)\geq J(\lambda)-J(t)$ and the explicit formula for $J(t)=I(t)$ in~\eqref{eq:formula_I_lambda} we obtain the inequality
   \begin{equation*}
   \mu_N\leq \frac{J(\lambda-N+1)}{\lambda-N+1}=J(1)(\lambda-N+1)^{\frac{2}{d}\frac{p-1}{1+\frac2d -p}}<0.
   \end{equation*}   

Next we derive the lower bound~\eqref{eq:estim_mu_N} on $\mu_N$. To this end we use the virial (also called Pohozaev) identity. Let $\gamma_a$ be the rescaled operator as in~\eqref{eq:rescaled}. Then the function
    $$a\mapsto \cE(\gamma_a)=a^2\tr(-\Delta \gamma)-\frac{a^{d(p-1)}}{p}\int_{\R^d}\rho_\gamma^p$$
    must attain its minimum at $a=1$. Writing that the derivative vanishes at this point we find the virial identity
    $$\tr(-\Delta \gamma) =\frac{d(p-1)}{2p}\int_{\R^d}\rho_\gamma^p.$$
    This gives
    $$J(\lambda)=\cE(\gamma)=\frac{d(p-1)-2}{2p}\int_{\R^d}\rho_\gamma^p
        \quad \text{and} \quad 
        \tr(H_\gamma\gamma)=\frac{d(p-1)-2p}{2p}\int_{\R^d}\rho_\gamma^p,
    $$
    so that 
    \begin{equation}
    \frac{2p-d(p-1)}{2-d(p-1)}J(\lambda)=\tr(H_\gamma\gamma)=\sum_{i=1}^{N-1}\mu_i+(\lambda-N+1)\mu_N \le \lambda \mu_N,
    \label{eq:Virial}
    \end{equation}
    where the last inequality comes from the fact that $\mu_i\leq\mu_N$ for all $i$. We obtain as claimed
$$\mu_N\geq \frac{2p-d(p-1)}{2-d(p-1)}\frac{J(\lambda)}{\lambda}.$$    

\subsubsection*{Step 3: Regularity and decay.}  
Note that the first eigenfunction $u_1$ of $H_\gamma$ is always positive and non-degenerate. Therefore $\rho_\gamma>0$. Since we now have a system of finitely many coupled Partial Differential Equations, the real-analyticity of the $u_i$'s follows from classical results~\cite{Morrey-58,Kato-96}. 
  
Next we show that the functions $u_i$ tend to 0 at infinity. In dimension $d\leq3$, this follows from the fact that $u_i\in H^2(\R^d)$ as we have seen in Lemma~\ref{lem:inequality_concavity}. In dimensions $d\geq4$ we need to employ a simple boot-strap argument. Assuming that $u_i\in L^r(\R^d)$ for all $i=1,...,N$, we infer that $\rho\in L^{r/2}(\R^d)$ and then $(-\Delta+|\mu_i|)u_i=\rho^{p-1}u_i\in L^{\frac{r}{2p-1}}(\R^d)$. When $r<d(2p-1)/2$, the Sobolev embedding shows that $u_i\in L^{f(r)}(\R^d)$ with $f(r)=dr/(2dp-d-2r)$. This function $f$ has the two fixed points $r=0$ and $r=d(p-1)<2$, the latter being unstable. Starting from any $r_0>2$ we obtain after iterating $f$ finitely many times an $r>d(2p-1)/2$. The Sobolev-Morrey embeddings now prove that the $u_i$ are continuous and tend to 0 at infinity.

\subsubsection*{Step 4. Form of $\delta$}
To prove that $\delta=\overline\delta$ is a finite rank projection plus a rank-one operator, we assume by contradiction that $\delta$ has two eigenvalues $\delta_1,\delta_2\in(0,1)$ with corresponding orthonormal real-valued eigenfunctions $u_i,u_j$ and we vary the corresponding eigenvalues linearly like $\delta_1+t,\delta_2-t$, as we did in the proof of Lemma~\ref{lem:gamma_wants_to_have_finite_rank}. The energy is concave in $t$, hence must be constant since $\gamma$ is a minimiser. The nonlinear term is even strictly concave, unless $|u_i|=|u_j|$. Since these are real-analytic real-valued functions, it would imply $u_i=\pm u_j$ everywhere, a contradiction. Therefore, at most one eigenvalue of $\delta$ can be in $(0,1)$ and this concludes the proof of Proposition~\ref{lem:Euler-Lagrange}. 
\end{proof}

\begin{remark}
The upper bounds on $\mu_N$ in~\eqref{eq:estim_mu_N} deteriorates when $\lambda\to(N-1)^+$. We were not able to bound $J'(N)_+$, as it probably requires the evaluation of $\mu_{N+1}(\lambda)$, which may vanish as $\lambda \to N^+$.
\end{remark}

Using the Euler-Lagrange equation~\eqref{eq:fermionic_NLS_2}, we can prove that the functions $u_i$ (and therefore the density $\rho$) are exponentially decaying. Actually, we may provide lower bounds as well. This is not obvious, because the functions $u_i$ have non trivial nodal sets for $i\geq2$. Only $u_1$ is positive everywhere. Following~\cite{BarMer-77,HofHofSwe-85}, we introduce, for $f \in L^q_{\rm loc}(\R^d, \C)$, the $q$-spherical average 
\begin{equation} \label{eq:average_f}
    [f]_q(x) := \left( \dfrac{1}{| \SS^{d-1} |} \int_{\SS^{d-1}} \big| f\big(|x| \omega\big) \big|^q\, \rd \sigma(\omega) \right)^{\frac1q}=\left( \int_{{\rm SO}(d)} \big| f\big(\cR x\big) \big|^q\, \rd \cR \right)^{ \frac1q}.
\end{equation}
In the second integral we use the normalised Haar measure on SO$(d)$. 

\begin{lemma}[Decay of minimisers at infinity] \label{lem:exp_decay_minimiser}
    Let $d\geq1$ and $1<p<1+2/d$. Let $\gamma$ be a real minimiser of $J(\lambda)$ of the form~\eqref{eq:equality_finite_rank}, with density $\rho = \rho_\gamma$.
Then we have the bounds 
    \begin{equation} \label{eq:estimates_rho}
       \frac1C \dfrac{\re^{ - 2 \sqrt{|\mu_N|} |x|}}{1 + |x|^{d-1}} \le [\rho]_1(x) 
        \quad \text{and} \quad 
        \rho(x)+|\nabla\rho(x)|  \le C \dfrac{\re^{ - 2 \sqrt{|\mu_N|} |x|}}{1 + |x|^{d-1}}
    \end{equation}
    for some constant $C>0$. 
    Similarly, for the eigenfunctions $u_i$ of $-\Delta-\rho^{p-1}$ with eigenvalue $\mu_i$ as in~\eqref{eq:fermionic_NLS_2}, we have
    \begin{equation} \label{eq:estimates_ui}
        \frac1{C_i} \dfrac{\re^{ - \sqrt{|\mu_i|} |x|}}{1 + |x|^{\frac{d-1}{2}}} \le [u_i]_2(x) 
        \quad \text{and} \quad 
        | u_i (x)|+|\nabla u_i(x)|  \le C_i \dfrac{\re^{ - \sqrt{|\mu_i|} |x|}}{1 + |x|^{\frac{d-1}{2}}}.
    \end{equation}
\end{lemma}

The slowest exponential is the one corresponding to the $N$th eigenvalue $\mu_N$ and it is the leading term in the density $\rho$. 

\begin{proof}[Proof of Lemma~\ref{lem:exp_decay_minimiser}]
First we derive a non-optimal exponential bound on $\rho$. We have 
    \begin{align*}
        - \Delta \rho & = 2 \sum_{i=1}^N \left( u_i ( - \Delta u_i) - | \nabla u_i |^2   \right)
             = 2 \sum_{i=1}^N \left( \mu_i | u_i |^2 + \rho^{p-1} | u_i |^2 -  | \nabla u_i |^2 \right) \\
             & \le 2 (\mu_N  +  \rho^{p-1})\rho,
    \end{align*}
    where we used the fact that $\mu_i \le \mu_N < 0$ in the last inequality. Since $\rho$ goes to $0$ at infinity and $p>1$, there exists a large enough $R > 0$ such that $\rho^{p-1}(x) < \frac12 |\mu_N| $ for all $| x | \geq R$. In particular, we have
    \[
        \left( - \Delta + |\mu_N| \right) \rho(x) \le 0, \quad \forall | x | \geq R.
    \]
Let $Y_m(x):=\sqrt{m}\,|x|^{1-d/2} K_{\frac{d-2}{2}}(m|x|)>0$ be the Yukawa potential, solution to 
$( - \Delta + m^2) Y_m = 0$ on $\R^d\setminus\{0\}$. Here $K_\alpha$ is the modified Bessel function of the second kind. 
From the asymptotic behavior of $K_\alpha$~\cite[9.7.2]{AbramowitzStegun} we have 
\begin{equation}
Y_m(x) \underset{|x|\to\ii}\sim \sqrt{\frac\pi2} \dfrac{\re^{- m | x |}}{|x|^{\frac{d-1}{2}}}.
\label{eq:behaviour_Yukawa} 
\end{equation}
On the sphere $R \SS^{d-1}$ of radius $R$, the function $\rho$ is bounded, so we have $\rho(x) \le C Y_m(x)$ for some $C>0$ with $m=\sqrt{|\mu_N|}$. Since 
    $$(-\Delta + | \mu_N |) (\rho- CY_m)=(-\Delta + | \mu_N |) \rho \le 0\qquad\text{on $\R^d \setminus B_R$},$$ 
    we deduce from the maximum principle~\cite[Chapter 9.4]{LieLos-01} that $\rho(x) \le C Y_m(x)$ on $\R^d \setminus B_R$. The function $\rho$ is bounded on $B_R$ and therefore we have proved the pointwise upper bound 
\begin{equation}
        \forall x \in \R^d, \quad \rho(x) \le C' \dfrac{\re^{ - \sqrt{|\mu_N|} |x|  }}{1 + |x|^{\frac{d-1}{2}}}.
        \label{eq:pointwise_exp_1}
\end{equation}
Compared with~\eqref{eq:estimates_rho}, we see that a factor $2$ is missing in the exponential. However we have learned that the potential $-\rho^{p-1}$ is exponentially decaying at infinity and for proving~\eqref{eq:estimates_ui} we can now rely on existing results for the eigenfunctions $u_i$ of the linear Schr\"odinger operator $-\Delta-\rho^{p-1}$. 
    
The upper bound in~\eqref{eq:estimates_ui} is well known but we give a detailed proof for completeness. Let for instance $F_i(x):=\sqrt{m_i}\,|x|^{1-\frac{d}2} K_{\frac{d-2}{2}+\eps}(m_i|x|)$ with $m_i=\sqrt{|\mu_i|}$ which solves, this time,
\begin{equation}
\left( - \Delta + |\mu_i|+\frac{\eps(d-2+\eps)}{|x|^2}\right) F_i = 0\qquad\text{on $\R^d\setminus\{0\}$}
\label{eq:F_i}
\end{equation}
see for instance~\cite{DerRic-17}. 
We choose $\eps$ small enough so that the inverse-square potential is attractive: $\eps(d-2+\eps)<0$. In other words, we take $\eps>0$ in dimension $d=1$ and $\eps<0$ in dimension $d\geq3$. The function $F_i$ is then positive on $\R^d\setminus\{0\}$. In dimension $d=2$ we have to take $\eps\in i\R$ and the resulting radial function $F_i$ vanishes infinitely many times close to the origin but it is positive for $|x|\geq |\eps|$~\cite{FerSes-70}. In all cases we have again
    \[
F_i(x) \underset{|x|\to\ii}\sim \sqrt{\frac\pi2} \dfrac{\re^{- \sqrt{|\mu_i|} | x |}}{|x|^{\frac{d-1}{2}}}
    \]
    by~\cite[9.7.2]{AbramowitzStegun}.
Using Kato's inequality~\cite[Theorem~X.27]{ReeSim2} and the exponential decay of $\rho^{p-1}$ we find 
$$\left(-\Delta+\frac{\eps(d-2+\eps)}{|x|^2}+|\mu_i|\right)|u_i|\leq 0 \qquad\text{on $\R^d\setminus B_R$}$$
for $R$ large enough. The maximum principle then gives as before $|u_i|\leq CF_i$ on $\R^d\setminus B_R$, and thus the upper bound on $|u_i|$ in~\eqref{eq:estimates_ui}. For the estimate on $\nabla u_i$, we can use that 
$$u_i=(-\Delta+|\mu_i|)^{-1}(\rho^{p-1}u_i)=CY_{m_i}\ast(\rho^{p-1}u_i)$$
with $m_i:=\sqrt{|\mu_i|}$, so that 
$$|\nabla u_i|\leq C|\nabla Y_{m_i}|\ast(\rho^{p-1}|u_i|).$$
The function $|\nabla Y_{m_i}|$ is integrable in a neighborhood of the origin and it behaves like $\sqrt{|\mu_i|}Y_{m_i}$ at infinity, due to the fact that 
\begin{equation}
K_\alpha'(r)\underset{r\to\ii}\sim-K_\alpha(r),
\label{eq:behaviour_K_prime}
\end{equation}
see~\cite[9.7.4]{AbramowitzStegun}. Thus, using our bound on $|u_i|$ and the exponential bound~\eqref{eq:pointwise_exp_1} for $\rho^{p-1}$, we find
$$|\nabla u_i|\leq C(|\nabla Y_{m_i}|\1_{B_1}+Y_{m_i})\ast(\rho^{p-1}|u_i|)\leq C(|\nabla Y_{m_i}|\1_{B_1})\ast Y_{M_i}+CY_{m_i}\ast Y_{M_i}$$
with $M_i:=(p-1)\sqrt{|\mu_N|}+\sqrt{|\mu_i|}>m_i$. The first term on the right behaves like $Y_{M_i}=o(Y_{m_i})$ at infinity, since $|\nabla Y_{m_i}|\1_{B_1}$ is integrable and has compact support. For the second term we remark that the Fourier transform of $Y_{m_i}\ast Y_{M_i}$ is proportional to
$$\frac{1}{(|k|^2+m_i^2)(|k|^2+M_i^2)}=\frac1{M_i^2-m_i^2}\left(\frac{1}{|k|^2+m_i^2}-\frac{1}{|k|^2+M_i^2}\right)$$
and hence 
\begin{equation} 
Y_{m_i}\ast Y_{M_i}=C\frac{Y_{m_i}- Y_{M_i}}{M_i^2-m_i^2}\leq \frac{C}{M_i^2-m_i^2}Y_{m_i}.
\label{eq:convolution_Yukawa}
\end{equation}
The pointwise bound on $|\nabla u_i|$ in~\eqref{eq:estimates_ui} follows from the behaviour~\eqref{eq:behaviour_Yukawa} of $Y_{m_i}$ at infinity and the fact that $u_i$ is $C^\ii$, hence $|\nabla u_i|$ is bounded on compact sets. 

For the lower bound in~\eqref{eq:estimates_ui} this is more complicated and we just apply~\cite[Corollary 2.2]{HofHofSwe-85}. This result exactly states that if $u_i \in H^1(\R^d)$ is solution to $ ( - \Delta + V) u_i = \mu_i u_i$ with $\mu_i < 0$ and $|V|\leq C|x|^{-1-\eps}$ for some $\eps>0$ at infinity, then the lower bound in~\eqref{eq:estimates_ui} holds. Thus we have proved all the estimates in~\eqref{eq:estimates_ui}.  

Finally, we come back to~\eqref{eq:estimates_rho} and notice that 
    \[
        \rho (x) = \sum_{i=1}^N u_i (x)^2,\qquad         [ \rho ]_1(x) = \sum_{i=1}^N [ u_i ]_2(x)^2\geq [ u_N]_2(x)^2.
    \]
Since $0 < | \mu_N | \le | \mu_{N-1} | \le \cdots < | \mu_1 |$, the leading term in the first sum is the exponential involving $| \mu_N |$ and~\eqref{eq:estimates_rho} follows.
\end{proof}

\subsection{Proof of binding}\label{sec:proof_binding}

We now focus on the proof of the binding inequality $J(\lambda+\lambda')<J(\lambda)+J(\lambda')$. The usual proof is to consider minimisers for $J(\lambda)$ and $J(\lambda')$, and to construct from them a good candidate for $J(\lambda+\lambda')$ by putting these two minimisers far from each other. In our case, all quantities are exponentially decaying, which makes the evaluation of the interaction quite delicate. The following is the heart of the paper. 

\begin{proposition}[Exponentially small binding] \label{lem:strongBinding}
Let $d\geq1$, $1<p<1+2/d$ and $\lambda,\lambda'>0$. Assume $J(\lambda)$ and $J(\lambda')$ admits the respective minimisers $\gamma$ and $\gamma'$, satisfying the properties in Proposition~\ref{lem:Euler-Lagrange}. Let $\mu=\mu_N$ and $\mu'=\mu'_M$ be the associated last filled eigenvalue of $H_{\gamma}$ and $H_{\gamma'}$. Then, under the additional condition
    \begin{equation} \label{eq:cond_on_p}
        1<p < 1 + \sqrt{\frac{\min \{| \mu | , |\mu' | \} }{ \max \{ | \mu |, | \mu' | \} }},
    \end{equation}
    we have the binding inequality $J(\lambda+\lambda') < J(\lambda) + J(\lambda')$.
\end{proposition}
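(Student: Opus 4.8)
Here is my plan for the proof.

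\smallskip

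\noindent\emph{Plan of proof.} The strategy is the classical one for binding: take the minimisers $\gamma$ of $J(\lambda)$ and $\gamma'$ of $J(\lambda')$ provided by Proposition~\ref{lem:Euler-Lagrange}, place a far-away translate of $\gamma'$ next to $\gamma$, and expand $\cE$ of the resulting trial state to the leading (exponentially small) order in the separation. Write $\rho:=\rho_\gamma$, $\rho':=\rho_{\gamma'}$, $\gamma=\sum_{i=1}^Nn_i|u_i\rangle\langle u_i|$, $\gamma'=\sum_{j=1}^Mm_j|v_j\rangle\langle v_j|$ (all $n_i,m_j>0$). For $\tau\in\R^d$ large, set $v_j^{(\tau)}:=v_j(\cdot-\tau)$, let $G_\tau$ be the Gram matrix of $(u_1,\dots,u_N,v_1^{(\tau)},\dots,v_M^{(\tau)})$ and — exactly as in the proof of Lemma~\ref{lem:weakBinding}(ii) — form the orthonormalised family $(G_\tau)^{-1/2}(u_1,\dots,u_N,v_1^{(\tau)},\dots,v_M^{(\tau)})$ and the admissible operator $\gamma^{(\tau)}$ with occupation numbers $(n_1,\dots,n_N,m_1,\dots,m_M)$, so that $\tr\gamma^{(\tau)}=\lambda+\lambda'$ and $J(\lambda+\lambda')\le\cE(\gamma^{(\tau)})$. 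It then suffices to prove $\cE(\gamma^{(\tau)})<\cE(\gamma)+\cE(\gamma')=J(\lambda)+J(\lambda')$ for some large $\tau$.

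Next I would expand $\cE(\gamma^{(\tau)})$ in the overlap matrix $S_\tau:=\big(\pscal{u_i,v_j^{(\tau)}}\big)_{ij}$, using $G_\tau=\bbI+\bigl(\begin{smallmatrix}0&S_\tau\\ S_\tau^*&0\end{smallmatrix}\bigr)$ and $(G_\tau)^{-1/2}=\bbI-\tfrac12\bigl(\begin{smallmatrix}0&S_\tau\\ S_\tau^*&0\end{smallmatrix}\bigr)+O(\norm{S_\tau}^2)$. The block structure of the unperturbed kinetic matrix kills the $O(\norm{S_\tau})$ contribution, and the Euler--Lagrange equations $(-\Delta)u_i=(\mu_i+\rho^{p-1})u_i$ (which rewrite $\pscal{\nabla u_i,\nabla v_j^{(\tau)}}=\mu_i(S_\tau)_{ij}+\int_{\R^d}\rho^{p-1}u_iv_j^{(\tau)}$) bound the next one, so that $\tr(-\Delta\gamma^{(\tau)})=\tr(-\Delta\gamma)+\tr(-\Delta\gamma')+O(\norm{S_\tau}^2)$. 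On the nonlinear side $\rho_{\gamma^{(\tau)}}=\rho+\rho'(\cdot-\tau)+\delta\rho_\tau+(\text{h.o.t.})$ with $\delta\rho_\tau=-\sum_{i,j}(n_i+m_j)(S_\tau)_{ij}u_iv_j^{(\tau)}$, and expanding $\int\rho_{\gamma^{(\tau)}}^p$ gives
\begin{equation*}
\cE(\gamma^{(\tau)})=J(\lambda)+J(\lambda')-\frac1p A_\tau+R_\tau,\qquad A_\tau:=\int_{\R^d}\Big[\big(\rho+\rho'(\cdot-\tau)\big)^p-\rho^p-\rho'(\cdot-\tau)^p\Big]\ge0 ,
\end{equation*}
where $A_\tau\ge0$ (superadditivity of $t\mapsto t^p$ on $\R_+$, $p>1$) is the attractive nonlinear interaction, and the remainder satisfies $|R_\tau|\le C(1+|\tau|)^{C}\norm{S_\tau}^2$, using $\int\rho^p,\int(\rho')^p<\ii$, the bound $|u_i|\le\sqrt{\rho/n_i}$, and H\"older's inequality.

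The two inputs from Lemma~\ref{lem:exp_decay_minimiser} are the \emph{pointwise upper} bounds $u_i(x)\le C\re^{-\sqrt{|\mu_i|}|x|}(1+|x|^{(d-1)/2})^{-1}$, $\rho(x)\le C\re^{-2\sqrt{|\mu_N|}|x|}(1+|x|^{d-1})^{-1}$ (and likewise for the primed objects) and the matching \emph{spherical-average lower} bounds on $[u_i]_2$ and $[\rho]_1$. From the upper bounds, a Laplace-type estimate of $\pscal{u_i,v_j^{(\tau)}}$ along the segment $[0,\tau]$ gives $\norm{S_\tau}\le C(1+|\tau|)^{C}\re^{-\sqrt{m}\,|\tau|}$ with $m:=\min\{|\mu|,|\mu'|\}$, hence $|R_\tau|\le C(1+|\tau|)^{C}\re^{-2\sqrt{m}\,|\tau|}$. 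For $A_\tau$ I would use the elementary inequality $\big(\rho+\rho'(\cdot-\tau)\big)^p-\rho^p-\rho'(\cdot-\tau)^p\ge(p-1)\min\{\rho,\rho'(\cdot-\tau)\}^p$ and localise the integral near the ``crossover hypersurface'' $\big\{x:\ \sqrt{|\mu_N|}\,|x|=\sqrt{|\mu'_M|}\,|x-\tau|\big\}$, on which both densities are comparable to $\re^{-2\kappa|\tau|}$ with $\kappa:=\dfrac{\sqrt{|\mu|\,|\mu'|}}{\sqrt{|\mu|}+\sqrt{|\mu'|}}$, to obtain, for every $\eps>0$, $A_\tau\ge c_\eps\,\re^{-(2p\kappa+\eps)|\tau|}$ for $|\tau|$ large. \textbf{The lower bound on $A_\tau$ is the main obstacle}: the densities are controlled only pointwise from above and only on average from below, while the crossover region degenerates to a single point on $[0,\tau]$ when $|\mu|\ne|\mu'|$, so showing that a non-negligible portion of it carries mass of the expected size requires a careful geometric argument combining the two sides of Lemma~\ref{lem:exp_decay_minimiser}, in the spirit of Bardos--M\'erigot~\cite{BarMer-77}.

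Finally, condition~\eqref{eq:cond_on_p}, i.e.\ $p-1<\sqrt{m/M}$ with $M:=\max\{|\mu|,|\mu'|\}$, is exactly equivalent to $2p\kappa<2\sqrt{m}$. Fixing $\eps>0$ with $2p\kappa+\eps<2\sqrt{m}-\eps$, we get for $|\tau|$ large
\begin{equation*}
\cE(\gamma^{(\tau)})\le J(\lambda)+J(\lambda')-\frac{c_\eps}{p}\re^{-(2p\kappa+\eps)|\tau|}+C(1+|\tau|)^{C}\re^{-2\sqrt{m}\,|\tau|}<J(\lambda)+J(\lambda') ,
\end{equation*}
whence $J(\lambda+\lambda')<J(\lambda)+J(\lambda')$, as claimed. (When $|\mu|=|\mu'|$ the crossover hypersurface is the bisecting hyperplane of $[0,\tau]$, one has $\kappa=\sqrt{|\mu|}/2$, and the condition reduces to $p<2$, recovering~\eqref{eq:binding_two_copies}.)
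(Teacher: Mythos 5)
Your overall architecture is the same as the paper's: same trial state (translate, Gram matrix, $(G_\tau)^{-1/2}$-orthonormalisation), same use of the Euler--Lagrange equations to reduce the kinetic cross terms, the same final energy identity with an attractive term $-\frac1p\int[(\rho+\rho'_R)^p-\rho^p-(\rho'_R)^p]$ versus an error of order $({\rm poly})\,\re^{-2\sqrt{m}\,R}$, and the same (correct) observation that \eqref{eq:cond_on_p} is equivalent to $2p\frac{\eps\eps'}{\eps+\eps'}<2\eps'$. However, the step you yourself flag as ``the main obstacle'' --- the lower bound on $A_\tau$ --- is precisely the heart of the proposition, and your sketch of it does not close. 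The difficulty is real: Lemma~\ref{lem:exp_decay_minimiser} gives pointwise \emph{upper} bounds but only \emph{spherical-average} lower bounds, so for each radius you only know that $\rho$ is large at \emph{some} point of the sphere centred at $0$ and $\rho'_R$ at \emph{some} point of the sphere centred at $Re_1$; nothing forces these good directions to meet on (or near) your crossover hypersurface, and localising ``near the crossover region'' cannot by itself produce a point where both densities are simultaneously of the expected size. As written, the claimed bound $A_\tau\ge c_\eps\,\re^{-(2p\kappa+\eps)|\tau|}$ is unproved, so the proof is incomplete at its decisive step.

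The paper's resolution is a short but essential additional idea: exploit the rotation invariance of the problem. From the lower bound on $[\rho]_1$ and the upper bound on $|\nabla\rho|$, for each large $r$ there is a point $x_r$ with $|x_r|=r$ and a ball $B(x_r,\eta)$ of \emph{fixed} radius $\eta$ on which $\rho\ge \tfrac c2(1+r)^{1-d}\re^{-2\eps r}$ (the gradient bound upgrades the single good point to a ball); the same holds for $\rho'$. One then \emph{rotates the two minimisers independently} before placing them, so that the good direction of $\gamma$ at radius $\frac{\eps'}{\eps+\eps'}R$ and the good direction of $\gamma'$ at radius $\frac{\eps}{\eps+\eps'}R$ both point along the segment; this makes $\rho$ and $\rho'_R$ simultaneously bounded below by $c'R^{1-d}\re^{-2\frac{\eps\eps'}{\eps+\eps'}R}$ on the common fixed ball centred at $x^*=\frac{\eps'}{\eps+\eps'}Re_1$. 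Superadditivity of $t\mapsto t^p$ then gives directly
\[
A_R\;\ge\;c''\,R^{p(1-d)}\,\re^{-2p\frac{\eps\eps'}{\eps+\eps'}R},
\]
with no $\eps$-loss in the exponent, and the comparison with the error term $O\big(R^{2d}\re^{-2\eps'R}\big)$ under \eqref{eq:cond_on_p} concludes exactly as in your final paragraph. So your exponent bookkeeping and the role of the threshold $p<1+\sqrt{m/M}$ are right, but you need this rotation argument (or an equivalent mechanism producing a common region where both densities attain their sharp lower bounds) to make the proof complete.
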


\begin{proof}
We only write the proof in the integer case $\lambda=N\in\N$ and $\lambda'=M\in\N$ for clarity. The arguments are exactly the same in the non-integer case, but the notation is a bit more heavy due to the additional rank-one operator. 

Let $\tilde\gamma := \sum_{i=1}^{N} | \tilde u_i \ket \bra \tilde u_i |$  and $\tilde\gamma' := \sum_{j=1}^{M} | \tilde v_j \ket \bra \tilde v_j |$ be two real minimisers for $J(N)$ and $J(M)$ respectively. Recall that our problem is invariant under rotations. Thus we can introduce $u_i(x)=\tilde u_i(\cR x)$ and $v_j(x)=\tilde v_j(\cR'x)$ for some $\cR,\cR'\in{\rm SO}(d)$ and deduce that these are minimisers of $J(N)$ and $J(M)$ as well. We will need to choose $\cR$ and $\cR'$ appropriately. For clarity, we prefer to postpone this discussion to Lemma~\ref{lem:estimates_eR_IR} below. Note that the functions $u_i,v_j$ satisfy the exponential bounds~\eqref{eq:estimates_ui} uniformly in $\cR,\cR'$ since those are invariant under rotations.

Next, we place the second system far away. For $R>0$, we set $v_{j,R}(x) := v_j(x - R e_1)$ where $e_1=(1,0,...,0)$, and we introduce the Gram matrix
    \[
       S_R := \begin{pmatrix}
        \bbI_N & E^R \\
        (E^R)^* & \bbI_M
       \end{pmatrix}, \quad \text{with} \quad E^R_{ij} := \bra u_i, v_{j,R} \ket = \int_{\R^d} u_i(x) v_j(x - Re_1) \rd x,
     \]
     as we did in the proof of Lemma~\ref{lem:weakBinding}.
Since the functions $u_i$ and $v_j$ are real-valued and exponentially decaying, $E^R$ is real and goes to $0$ exponentially fast. So the Gram matrix $S_R$ is real symmetric positive definite for $R$ large enough. As before, the frame
    $$\begin{pmatrix}
       \psi_{1,R} & \cdots & \psi_{N+M,R}
      \end{pmatrix} :=
      \begin{pmatrix}u_1 & \cdots & u_N & v_{1,R} & \cdots & v_{M,R}\end{pmatrix}  (S_R)^{-\frac12}
      $$
is orthonormal. Our trial state is the orthogonal projection onto this frame, given by
\begin{align*}
\gamma_R &= \sum_{i=1}^{N} | \psi_{i,R} \ket \bra \psi_{i,R} |+\sum_{k=1}^{M} | \psi_{N+k,R} \ket \bra \psi_{N+k,R} |
\\
&=\sum_{i,j=1}^N(S_R^{-1})_{ij}|u_i\rangle\langle u_j|+\sum_{k,\ell=1}^{M}(S_R^{-1})_{N+k,N+\ell}|v_{k,R}\rangle\langle v_{\ell,R}|\\
&\qquad +\sum_{i=1}^N\sum_{k=1}^M\Big((S_R^{-1})_{i ,N+k} |u_i\rangle\langle v_{k,R}|+(S_R^{-1})_{N+k,i} |v_{k,R}\rangle\langle u_i|\Big).
\end{align*}
    To compute $\cE(\gamma_R)$, we consider the Taylor expansion with respect to the largest overlap
$$e_R:=\max_{i,j}\int_{\R^d} |u_i(x)|\,|v_j(x - Re_1)| \rd x.$$
Note that $\| E^R \| \le e_R$. We compute all quantities to the order $O(e_R^2)$.

First, from $(1 + E)^{-1} = 1 - E + O(E^2)$, we have
    \[
        (S_R)^{-1} = \begin{pmatrix}
        \bbI_N & 0 \\
        0 & \bbI_M
        \end{pmatrix} - \begin{pmatrix}
        0 & E^R \\
        (E^R)^* & 0
        \end{pmatrix} + O(e_R^2).
    \]
    This gives, to first order and with $\gamma'_R(x,y)=\gamma'(x-Re_1,y-Re_1)$ the translation of $\gamma'$,
    \begin{equation} \label{eq:Taylor_gamma}
        \gamma_R  = \gamma + \ \gamma'_{R} - \sum_{i=1}^N\sum_{j=1}^M E_{ij}^R \left( | u_i \ket \bra v_{j,R} | + | v_{j,R} \ket \bra u_i | \right)  + O_{\|\cdot\|_{1,1}}(e_R^2)
    \end{equation}
    where $\|\gamma\|_{1,1}=\tr|\sqrt{1-\Delta}\gamma\sqrt{1-\Delta}|$ is the Sobolev-type trace norm. Let us evaluate the different terms in the energy. For the kinetic energy, we obtain (recall that everything is real-valued and that $N$ and $M$ are finite)
    \begin{equation*}
         \Tr(- \Delta \gamma_R) - \Tr(- \Delta \gamma) - \Tr(- \Delta \gamma') = - 2 \left(\sum_{i=1}^N\sum_{j=1}^M E_{ij}^R \int_{\R^d} \nabla u_i \cdot \nabla v_{j,R} \right)  + O(e_R^2).
    \end{equation*}
    Using the Euler-Lagrange equations~\eqref{eq:fermionic_NLS}, we see that
    \[
        \big(- \Delta - \rho^{p-1}\big) u_i = \mu_i u_i \quad \text{and} \quad \big(- \Delta- (\rho'_{R})^{p-1} \big)v_{j,R} = \mu'_j v_{j,R},
    \]
    with $\rho=\rho_\gamma$ and $\rho'_R=\rho_{\gamma'}(\cdot-Re_1)$.  This gives for instance
    \[
        \int_{\R^d} \nabla u_i \cdot \nabla v_{j,R} = \mu_i  E_{ij}^R+\int_{\R^d} \rho^{p-1} u_i v_{j,R}.
    \]
Since $\rho$ is bounded, together with the definition of $e_R$, we deduce that
\[
    \Tr(- \Delta \gamma_R) - \Tr(- \Delta \gamma) - \Tr(- \Delta \gamma') =  O(e_R^2).
\]
   
   We now compute the difference for the term $\int \rho^p$. We first find an expression for $\rho_R$, the density of $\gamma_R$. From~\eqref{eq:Taylor_gamma}, we get 
   \begin{equation*}
     \rho_R = \rho + \rho'_{R} -  2  \sum_{i=1}^N\sum_{j=1}^M E_{ij}^R u_i v_{j,R} + O_{L^p(\R^d)}( e_R^2).
   \end{equation*}
   This gives
   \begin{align*}
       \int_{\R^d} \rho_R^p  = &\int_{\R^d} \left( \rho + \rho'_{R} \right)^p
        - 2p \int_{\R^d} \left( \rho + \rho'_{R} \right)^{p-1}   \sum_{i=1}^N\sum_{j=1}^M E_{ij}^R u_i v_{j,R}   + O(e_R^2).
   \end{align*}
  Again, using that $\rho$ and $\rho_R'$ are bounded functions, the last integral is of order $O(e_R^2)$. Altogether, this proves that
   \begin{align*}
J(N+M)-J(N)-J(M)&\leq \cE(\gamma_R) - J(N) - J(M)\\
&=  - \frac1p\int_{\R^d} \Big( (\rho + \rho'_{R})^p  - \rho^p - (\rho'_{R})^p  \Big) + O(e_R^2).
    \end{align*}
The orthonormalisation procedure generates an error of the order $O(e_R^2)$ in the energy. The first term of the second line is the nonlinear interaction and it is always negative, from the concavity of $x \mapsto -x^p$. The question is whether it wins over the error term $O(e_R^2)$. This is the topic of the next result. We recall that our functions $u_i$ and $v_j$ depend on the two rotations $\cR$ and $\cR'$ which we have introduced at the beginning of the proof and which we choose now, in terms of $R$.

    \begin{lemma} \label{lem:estimates_eR_IR}
        Let $\eps := \sqrt{|\mu_N|}$ and $\eps' :=\sqrt{|\mu_M|}$ and assume, without loss of generality, that $\eps' \le \eps$. Then, there is $C \ge 0$ so that for all $R$ large enough
        \begin{equation} \label{eq:bound_eR}
            e_R \le CR^{\frac{3-d}{2}} \re^{ - \varepsilon' R},
        \end{equation}
uniformly in $\cR,\cR'\in{\rm SO}(d)$. On the other hand, for every $R$ large enough there exists $\cR,\cR'\in{\rm SO}(d)$ such that 
        \begin{equation}
            I_R:=\int_{\R^d} \Big( (\rho + \rho'_{R})^p  - \rho^p - (\rho'_{R})^p  \Big) \ge C R^{-p(d - 1)} \re^{ - 2 p \frac{\varepsilon \varepsilon'}{\varepsilon + \varepsilon' }R}.
            \label{eq:lower_bd_I_R}
        \end{equation}
In particular, if $1 < p < 1 + \frac{\varepsilon'}{\varepsilon}$, then $e_R^2 = o(I_R)$.
    \end{lemma}

The polynomial factors $R^{\frac{3-d}{2}}$ and $R^{-p(d-1)}$ in~\eqref{eq:bound_eR} and~\eqref{eq:lower_bd_I_R} are not necessarily optimal but will suffice for our argument. Before we provide the proof of Lemma~\ref{lem:estimates_eR_IR}, we remark that it immediately gives the strict inequality
$$J(N+M)-J(N)-J(M)<0$$
after taking $R$ large enough, whenever $1 < p < 1 + \frac{\varepsilon'}{\varepsilon}$. This condition is equivalent to the one in~\eqref{eq:cond_on_p}. 
It thus only remains to provide the proof of Lemma~\ref{lem:estimates_eR_IR}.
    
\begin{proof}[Proof of Lemma~\ref{lem:estimates_eR_IR}]
Let us first bound $e_R$. By Lemma~\ref{lem:exp_decay_minimiser} we have $|u_i|\leq C Y_{m_i}$ and $|v_j|\leq C Y_{m'_j}$ with $m_i=\sqrt{|\mu_i|}$ and $m'_j=\sqrt{|\mu'_j|}$. Thus we find 
$$\int_{\R^d} | u_i | | v_{j,R} | \leq C Y_{m_i}\ast Y_{m'_j}(Re_1)\leq C Y_{\eps}\ast Y_{\eps'}(Re_1).$$
When $\eps'<\eps$ we have $Y_{\eps}\ast Y_{\eps'}\leq C(\eps-\eps')^{-1}Y_{\eps'}$ by~\eqref{eq:convolution_Yukawa}. When $\eps'=\eps$ we can use that $Y_\eps\ast Y_\eps=C\eps^{-1}\partial_m Y_m|_{m=\eps}$ which behaves at infinity like $rY_\eps(r)$, by~\eqref{eq:behaviour_K_prime}. This gives the bound~\eqref{eq:bound_eR}.

    Let us now bound $I_R$ from below. Recall that $\rho(x)=\tilde\rho(\cR x)$ and $\rho'(x)=\tilde\rho'(\cR'x)$
    From Lemma~\ref{lem:exp_decay_minimiser}, the spherical average of $\tilde \rho$ is bounded below in terms of $(1+|x|)^{1-d} \re^{ - 2 \eps |x|}$. This means that for any large enough $r$, there exists at least one $x_r$ on the sphere of radius $r$ such that $\tilde \rho(x_r)\geq cr^{1-d} \re^{ - 2 \eps r}$. The same lemma provides the pointwise upper bound $|\nabla\tilde \rho(x)|\leq Cr^{1-d} \re^{ - 2 \eps r}$ and this implies that 
$$\tilde \rho(x)\geq \frac{c\,\re^{ - 2 \eps r} }{2r^{d-1}}   $$
on the ball $B(x_r,\eta)$ with $\eta=c/(2C)$. The same property holds for $\tilde \rho'$ at a point $x'_r$. We choose the rotations $\cR$ and $\cR'$ to align the points $x_r$ and $x'_{r'}$ with the radii $r+r'=R$ chosen to obtain the largest possible interaction. More precisely, we introduce the point 
$$x^*=\frac{\eps'}{\eps + \eps'}Re_1$$
where $|x|^{1-d} \re^{ - 2 \eps |x|}$ and $|x-Re_1|^{1-d} \re^{ - 2 \eps |x-Re_1|}$ are of the same order $R^{1-d} \re^{ - 2 \frac{\eps\eps'}{\eps+\eps' }R}$. We then take $\cR,\cR'$ such that 
$$x_r=\cR x^*,\qquad x'_{r'}=\cR'(x^*-Re_1),\qquad r=\frac{\eps'}{\eps + \eps'}R,\qquad r'=\frac{\eps}{\eps + \eps'}R.$$
We obtain 
$$\rho(x) \ge c' R^{1-d} \re^{ - 2 \frac{\eps\eps'}{\eps+\eps' }R},\qquad \rho'_R(x) \ge c' R^{1-d} \re^{ - 2 \frac{\eps\eps'}{\eps+\eps' }R},$$   
for all $x\in B(x^*,\eta)$, with 
$$c'=\frac{c}{2}\left(\frac{\eps}{\eps+\eps'}\right)^{1-d}.$$
Since the function $(x,y)\mapsto (x+y)^p-x^p-y^p$ is increasing in $x$ and in $y$ separately for $p>1$, we deduce that 
   \[
       \left[ \big(\rho + \rho'_R\big)^p  - \rho^p - (\rho'_R)^p  \right](x) \ge (c')^p (2^p - 1) R^{p(1-d)} \re^{ - 2 p \frac{\eps\eps'}{\eps+\eps'}R},
   \]
   for all $x\in\ B(x^*,\eta)$. From the positivity of the integrand on $\R^d$ we thus obtain 
   $$I_R\geq \int_{B(x^*,\eta)}\left[ \big(\rho + \rho'_R\big)^p  - \rho^p - (\rho'_R)^p  \right] \ge |B(0,\eta)|(c')^p (2^p - 1) R^{p(1-d)} \re^{ - 2 p \frac{\eps\eps'}{\eps+\eps'}R}$$
   which is the claimed lower bound~\eqref{eq:lower_bd_I_R}.
    \end{proof}

    This concludes the proof of Proposition~\ref{lem:strongBinding}.    
\end{proof}

One immediate consequence of Proposition~\ref{lem:strongBinding} is the case $\lambda = \lambda'$, where the condition~\eqref{eq:cond_on_p} is always satisfied whenever $p < 2$. Since $J(1)=I(1)$ always has a minimiser, we also conclude that $J(2)$ always has one, for $p<2$. 

\begin{corollary} \label{cor:J(2N)}
Let $d\geq1$ and $1 < p  < \min \{ 2, 1+{2}/{d} \}$. Then, if $J(\lambda)$ has a minimiser, we have $J(2\lambda) < 2 J(\lambda)$. In particular, $J(2)<2J(1)$ and $J(2)$ has a minimiser. 
\end{corollary}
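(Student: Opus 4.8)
The plan is to apply Proposition~\ref{lem:strongBinding} with $\lambda' = \lambda$, exploiting the fact that the hypothesis provides a minimiser for $J(\lambda)$ which can simultaneously be reused as a minimiser for $J(\lambda')$. Concretely, I would first take $\gamma$ a minimiser for $J(\lambda)$; by Proposition~\ref{lem:Euler-Lagrange} we may assume it has the form~\eqref{eq:equality_finite_rank} with real-valued eigenfunctions and satisfies the associated Euler--Lagrange equations, so that the hypotheses of Proposition~\ref{lem:strongBinding} are met. Then set $\gamma' := \gamma$, so that the last filled eigenvalues coincide, $\mu' = \mu$. In this situation $\min\{|\mu|,|\mu'|\} = \max\{|\mu|,|\mu'|\}$, hence the quantity under the square root in condition~\eqref{eq:cond_on_p} equals $1$, and that condition reduces to $1 < p < 1 + 1 = 2$, which holds by the standing assumption $1 < p < \min\{2, 1 + 2/d\}$. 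Proposition~\ref{lem:strongBinding} then yields $J(2\lambda) = J(\lambda + \lambda) < J(\lambda) + J(\lambda) = 2 J(\lambda)$, which is the first assertion.

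For the particular case, I would recall that $J(1) = I(1)$ by Lemma~\ref{lem:gamma_wants_to_have_finite_rank}$(ii)$, and that $I(1)$ admits a minimiser — the NLS ground state $Q$ — as explained in the introduction (classical existence via concentration-compactness, using the strict binding inequalities~\eqref{eq:binding_NLS}). Applying the first part with $\lambda = 1$ gives $J(2) < 2 J(1)$.

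To deduce that $J(2)$ itself has a minimiser, I would invoke Theorem~\ref{th:Lewin} with $N = 2$. The binding condition~\eqref{eq:strong_binding} then consists of the single inequality, corresponding to $k = 1$, namely $J(2) < J(1) + J(1) = 2 J(1)$, which we have just established. Hence $J(2)$ has a minimiser, and by Proposition~\ref{lem:Euler-Lagrange} it is in fact a ground state in the sense of Definition~\ref{def:ground_state}.

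I do not expect any genuine obstacle here: the corollary is a direct specialisation of Proposition~\ref{lem:strongBinding}, whose proof already contains all the analytical work (the delicate estimate of the exponentially small interaction between two far-apart minimisers). The only points requiring a little care are the elementary observation that the diagonal case $\mu = \mu'$ makes~\eqref{eq:cond_on_p} equivalent to $p < 2$, and the verification that the structural hypotheses of Proposition~\ref{lem:strongBinding} (existence of minimisers satisfying the conclusions of Proposition~\ref{lem:Euler-Lagrange}) are available — which they are, since any minimiser of $J(\lambda)$ can be replaced by one of the required form by that same proposition.
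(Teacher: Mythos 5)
Your proposal is correct and follows essentially the same route as the paper: apply Proposition~\ref{lem:strongBinding} with $\gamma'=\gamma$ so that $\mu=\mu'$ and condition~\eqref{eq:cond_on_p} reduces to $p<2$, then use the fact that $J(1)=I(1)$ has a minimiser together with Theorem~\ref{th:Lewin} for $N=2$ to conclude that $J(2)<2J(1)$ and that $J(2)$ admits a minimiser. Your extra care in invoking Proposition~\ref{lem:Euler-Lagrange} to put the minimiser in the required form is exactly the implicit step in the paper's one-line argument.
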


\subsection{Proof of Theorem~\ref{thm:ground_states_orthonormal_small_p}}\label{sec:proof_existence_small_p}

Assume that $J(N)$ and $J(M)$ have minimisers. Let $\mu_N$ and  $\mu'_M$ be the corresponding last filled eigenvalues. From~\eqref{eq:estim_mu_N} in the integer case $\lambda = N$, and the fact that $0 > J(N)/N\geq e_{\rm LT}(d,p)$ by \textit{(i)} in Lemma~\ref{lem:weakBinding}, we have 
\begin{equation} \label{eq:pc(d)}\sqrt{\frac{\min\{|\mu_N|,|\mu'_M|\}}{\max\{|\mu_N|,|\mu'_M|\}}}
\ge \sqrt{\frac{|I(d,p,1)|}{|e_{\rm LT}(d,p)|}}\sqrt{\frac{2-d(p-1)}{2p-d(p-1)}}.
\end{equation}
Let $p_c(d)\in(1,2)$ be the first zero of the function
$$p\mapsto 1+\sqrt{\frac{|I(d,p,1)|}{|e_{\rm LT}(d,p)|}}\sqrt{\frac{2-d(p-1)}{2p-d(p-1)}}-p.$$
Using that $e_{\rm LT}(d,p)$ and $I(d,p,1)$ have a finite limit when $p\to1^+$ we deduce that $p_c(d)>1$. 
Hence, if $J(N)$ and $J(M)$ have minimisers and if $1<p<p_c(d)$, then $J(N+M)<J(N)+J(M)$ by Proposition~\ref{lem:strongBinding}. 

Since we already know that $J(1)=I(1)$ has a minimiser, we can deduce by induction on $N$ that binding holds and that $J(N)$ has a minimiser for all $N$, when $1<p< p_c(d)$. This concludes the proof of Theorem~\ref{thm:ground_states_orthonormal_small_p}.\qed

\begin{remark}[Numerical evaluation of $p_c(d)$]\label{rmk:comput_p_c}
Any lower bound on  $|I(d,p,1)|$ and on the Lieb-Thirring constant $c_{\rm LT}(d)$ appearing in the definition~\eqref{eq:e_LT} of $e_{\rm LT}(d,p)$ yields a lower bound on $p_c(d)$. The NLS energy $I(d,p,1)$ can easily be computed numerically to a high precision since this amounts to solving an ordinary differential equation for the radial function $Q$. Using a Runge-Kunta solver for $Q$ and the recent lower bound 
\begin{equation}
 c_{\rm LT}(d)\geq(0.471851)^{\frac1d}\,c_{\rm sc}(d)
 \label{eq:lower_bd_LT}
\end{equation}
 from~\cite[Prop.~10]{FraHunJexNam-18_ppt} (see also~\cite[Thm.~5]{Frank-20_ppt}), we obtained the lower bounds mentioned in~\eqref{eq:values_p_c_d} for $p_c(d)$ in dimensions $d=1,2,3$. Should $c_{\rm LT}(3)$ be equal to its conjectured value $c_{\rm sc}(3)$, we would obtain the slightly better bound $p_c(3)\geq 1.494$. 

An exact upper bound on $I(d,p,1)$ can be obtained using Gaussian functions as trial states:
\begin{equation}
I(d,p,1)\leq -  \frac{d}{2}\left( 1 + \frac{2}{d} - p\right)
        \left( \frac{p-1}{\pi} \right)^{\frac{d(p-1)}{2 + d - dp}}
   p^{-\frac{2+d}{2 + d - dp}}.
\label{eq:upper_bd_I_Gaussian}
\end{equation}
This happens to be very precise in the regime of interest. In fact, using~\eqref{eq:upper_bd_I_Gaussian} and~\eqref{eq:lower_bd_LT}, we already obtain 
\begin{equation}
p_c(d)>\begin{cases}
  1.612& \text{for $d=1$,}\\
  1.526& \text{for $d=2$,}\\
  1.440& \text{for $d=3$.}
  \end{cases} 
 \label{eq:values_p_c_d_bis}
\end{equation}
We can therefore safely claim that $4/3<p_c(3)$. 
\end{remark}

\subsection{Proof of Theorem~\ref{thm:ground_states_orthonormal_all_p}}\label{sec:proof_existence_all_p}
We now prove that $J(N)$ has a minimiser for an infinity of integers $N \in \N$. Let us call $\cN\subset\N$ the set of all the integers $n$ which satisfies the binding inequalities $J(n)<J(k)+J(n-k)$ for all $k=1,...,n-1$. In particular, $J(n)$ has a minimiser for all $n\in\cN$. For $1<p<\min(2,1+2/d)$ we already know that $1,2\in\cN$. Our goal is to show that $\cN$ is not finite. 

If $N\notin\cN$, then there is $1 \le k \le N-1$ so that $ J(N) = J(k) + J(N-k)$. If $k \notin \cN$ or $N-k \notin \cN$, we can further decompose $J(k)$ or $J(N-k)$, and so on, until we obtain a decomposition of the form
\begin{equation}
    J(N)=\sum_{n\in \cN} k_n J(n),\quad N=\sum_{n \in \cN} k_n n, \quad k_n \in \N.
    \label{eq:decomp_not_in_cN}
\end{equation}
We claim that $k_n \in \{ 0, 1 \}$ and prove this by contradiction. Let $n_0 \in \cN$ be so that $k_{n_0} \ge 2$ in the previous decomposition. Then, since $n_0 \in \cN$, we have by Proposition~\ref{lem:strongBinding} that $J(2n_0)<2J(n_0)$. Together with the weak-binding inequality, this would imply
\[
    J(k_{n_0} n_0)\leq (k_{n_0}-2)J(n_0)+J(2n_0)<k_{n_0} J(n_0).
\]
In other words, if two of more "bubbles" have the same number of particles, it is energetically favourable for these bubbles to merge. This gives
\[
    J(N) = J \left( \sum_{n \in \cN \atop n \neq n_0} k_n n + k_{n_0} n_0 \right) <
    \sum_{n \in \cN \atop n \neq n_0} k_n J(n) + k_{n_0} J(n_0) = \sum_{n \in \cN} k_n J(N),
\]
which is in contradiction with~\eqref{eq:decomp_not_in_cN}. So the coefficients $k_n$ in~\eqref{eq:decomp_not_in_cN} must all be equal to $0$ or $1$. Now $\cN$ cannot be finite otherwise we would not be able to write all the $N\notin\cN$ as in~\eqref{eq:decomp_not_in_cN} with $k_n\in\{0,1\}$. This concludes the proof that $\cN$ is infinite. \qed

\subsection{Proof of Theorem~\ref{thm:limit_N} on the large--$N$ limit}\label{sec:proof_large-N-limit}

Using the Lieb-Thirring inequality~\eqref{eq:LT_orthonormal}, we have already seen in \textit{(i)} in Lemma~\ref{lem:weakBinding} that 
$$J(N)\geq e_{\rm LT}(d,p)\,N,\qquad \forall N\in\N.$$
In particular, we see that $J(N)/N$ is bounded from below. 

In Lemma~\ref{lem:weakBinding} we have also shown in \textit{(ii)} that $J(N)\leq J(N-K)+J(K)$ for every integer $K=1,...,N-1$. We deduce first that $J(N)/N \leq J(1) = I(1)<0$ which appears in the upper bound~\eqref{eq:upper_lower_bd_e}. The first inequality is in fact strict when $p<2$, by Corollary~\ref{cor:J(2N)}. Since the function $N\mapsto J(N)$ is subadditive and bounded from below by a constant times $N$ we conclude that $J(N)/N$ converges to its infimum $e(d,p)$ as in~\eqref{eq:limit_N_infty}, by Fekete's subadditive lemma~\cite{Fekete-23}.

It remains to prove the upper bound on $e(d,p)$ in terms of the semi-classical constant $c_{\rm sc}(d)$. Instead of using the Dirichlet eigenfunctions of a large domain $\Omega$, as we mentioned after Theorem~\ref{thm:limit_N},  we rather localise the periodic eigenfunctions in a cube (plane waves), which gives more explicit formulas. Let $C_L$ be the cube of side length $L$ centered at the origin and let $\chi\in C^\ii_c(\R^d,\R_+)$ be such that $\int_{\R^d}\chi=1$. Denote
$$u_{k}(x)=L^{-\frac{d}2}\sqrt{\1_{C_L}\ast\chi}\;e^{- i k\cdot x}$$
for $k\in (2\pi/L)\Z^d$. It turns out that these functions are orthonormal, since
$$\pscal{u_k,u_{k'}}=L^{-d}\int_{\R^d}\1_{C_L}\ast\chi\; e^{i(k-k')\cdot x}\,\rd x=(2\pi)^{\frac{d}2}\widehat{\1_{C_L}\ast\chi}(k-k')=0$$
due to the fact that $\widehat{1_{C_L}}(\ell)=0$ for $\ell\in (2\pi/L)\Z^d\setminus\{0\}$. 
A computation gives that the kinetic energy of each such function is equal to
$$\int_{\R^d}|\nabla u_k(x)|^2=|k|^2+\frac1{L^d}\int_{\R^d}|\nabla\sqrt{\1_{C_L}\ast\chi}(x)|^2\,\rd x.$$
The second term is a $O(1/L)$ since the function $\1_{C_L}\ast\chi$ is equal to 1 inside $C_L$, at a distance of order one to its boundary, and vanishes outside at a similar distance. We take $N \approx | C_L | \rho_* = L^d \rho_*$ such functions, each with a different $k$, with $\rho_*$ given by~\eqref{eq:formula_rho_*} in Lemma~\ref{lem:LTminimisation} and $C=c_{\rm sc}(d)$. We find the energy
$$\cE(u_{k_1},...,u_{k_N})=\sum_{j=1}^Nk_j^2-\frac{N^p}{pL^{dp}}\underbrace{\int_{\R^d}(\1_{C_L}\ast\chi)^p}_{=L^d+O(L^{d-1})}+O\left(\frac{N}{L}\right).$$
The first term is minimum when we take for the $k_j$ all the points of $(2\pi/L)\Z^d$ in a ball of fixed radius $R$, where $R$ is chosen so that there are $N$ points, that is, $R \approx \frac{1}{(2 \pi)} \left(\frac{d \rho_*}{| \SS^{d-1} |}\right)^{1/d}$.
Taking the limit $L\to\ii$ gives the upper bound in~\eqref{eq:upper_lower_bd_e}.\qed

\bigskip

Using that $J$ is Lipschitz by \textit{(iii)} in Lemma~\ref{lem:weakBinding}, we infer
\begin{align*}
\left|\frac{J(N+\alpha)}{N+\alpha} -\frac{J(N)}{N}\right|&\leq \frac{|J(N+\alpha)-J(N)|}{N+\alpha} +\frac{\alpha|J(N)|}{N(N+\alpha)}\\
&\leq \frac{\alpha}{N+\alpha}\big(C+e(d,p)\big)\underset{N\to\ii}{\longrightarrow}0,
\end{align*}
for any $\alpha\in[0,1]$. This proves that $J(\lambda)/\lambda$ has the same limit as when it is restricted to integers:
\begin{equation}
\boxed{\lim_{\lambda\to\ii}\frac{J(\lambda)}{\lambda}=e(d,p).}
\label{eq:limit_lambda}
\end{equation}
Using similar arguments as in the proof of Theorem~\ref{thm:ground_states_orthonormal_all_p}, we can then prove that $J(\lambda)/\lambda$ is always strictly above its limit, independently of whether it admits a minimiser or not.

\begin{corollary} \label{lem:decomposition_same_stable}
Let $d\geq1$ and $1 < p  < \min \{ 2, 1+\frac{2}{d} \}$. For all $\lambda>0$ and all $m\in\N\setminus\{1\}$ we have $J(m \lambda) < m J(\lambda)$. In particular, we have 
\begin{equation}
\frac{J(\lambda)}{\lambda}>e(d,p)
\end{equation}
for all $\lambda>0$.
\end{corollary}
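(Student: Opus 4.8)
The plan is to reduce the general claim $J(m\lambda)<mJ(\lambda)$ to the strict inequality $J(2\lambda)<2J(\lambda)$, which is exactly Corollary~\ref{cor:J(2N)} when $J(\lambda)$ has a minimiser. The difficulty is that we no longer assume $J(\lambda)$ admits a minimiser, so Proposition~\ref{lem:strongBinding} cannot be applied directly. First I would handle the case $m=2$. The idea is that even if $J(\lambda)$ is not attained, it can be approximated: by Theorem~\ref{th:Lewin} and the structure of the binding inequalities, if $J(\lambda)$ has no minimiser then there must be a splitting $J(\lambda)=J(\lambda_1)+J(\lambda_2)$ with $\lambda=\lambda_1+\lambda_2$, $\lambda_1,\lambda_2>0$, and one can iterate this decomposition. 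Because $J$ is bounded below by a linear function (Lemma~\ref{lem:weakBinding}(i)) and strictly negative, and because $J$ restricted to integers is superadditive-defeating via Corollary~\ref{cor:J(2N)}, the iteration must terminate: one obtains a finite decomposition $\lambda=\sum_k \lambda_k$ with $J(\lambda)=\sum_k J(\lambda_k)$ where \emph{each} $J(\lambda_k)$ is attained. (One should check the decomposition terminates; this uses that the number of pieces with mass $\geq\delta$ is bounded, and pieces of arbitrarily small mass would force, via $J(\mu)=I(\mu)$ for $\mu\le 1$ and~\eqref{eq:formula_I_lambda}, a strict binding gain, contradiction — this is the standard concentration-compactness dichotomy argument.)

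Given such a decomposition $\lambda = \sum_k \lambda_k$ with each $J(\lambda_k)$ attained by some minimiser $\gamma_k$ satisfying the properties of Proposition~\ref{lem:Euler-Lagrange}, I would then take two disjoint copies of this whole configuration, i.e.\ translate one full family $\{\gamma_k\}$ far from another full family $\{\gamma_k'\}$ (copies of the same minimisers). The interaction analysis is governed by the slowest-decaying density among all the pieces; crucially, in the doubled configuration the two sides are \emph{identical}, so the relevant pair of last eigenvalues $(\mu,\mu')$ in condition~\eqref{eq:cond_on_p} coincides, $\mu=\mu'$, and~\eqref{eq:cond_on_p} reduces to $p<2$, which holds by hypothesis. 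Running the same Taylor expansion and interaction estimate as in the proof of Proposition~\ref{lem:strongBinding} (applied to the outermost, most slowly decaying pieces of the two families, placed so that their tails overlap) yields a strictly negative leading term, hence
$$J(2\lambda)\le \mathcal E(\text{doubled trial state}) < \sum_k J(\lambda_k) + \sum_k J(\lambda_k) = 2J(\lambda).$$
The main obstacle is precisely making this gluing rigorous when the configuration has several pieces rather than one: one must verify that the Gram-matrix orthonormalisation and the $O(e_R^3)$ control from~\eqref{eq:Taylor_gamma} still work when $E_R$ couples many functions across the two families, and that the dominant (negative) term comes from a single pair of overlapping slowly-decaying tails. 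This is essentially bookkeeping on top of the proof of Proposition~\ref{lem:strongBinding}, but it is where the real work lies.

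Once $J(2\lambda)<2J(\lambda)$ is established for every $\lambda>0$, the case of general $m\ge 2$ follows by a trivial induction: $J(m\lambda)\le J(2\lambda)+J((m-2)\lambda)<2J(\lambda)+(m-2)J(\lambda)=mJ(\lambda)$, using subadditivity (Lemma~\ref{lem:weakBinding}(ii)) for the first inequality and $J((m-2)\lambda)\le (m-2)J(\lambda)$ (again by repeated subadditivity) for the second, together with the already-proved strict bound for the two-copy piece. Finally, the consequence $J(\lambda)/\lambda > e(d,p)$ is immediate: by~\eqref{eq:limit_N_infty}, $e(d,p)=\inf_{n\ge1} J(n)/n=\lim_n J(n)/n$, and for any $\lambda>0$ and any $m\ge 2$ we have $J(m\lambda)/(m\lambda) < J(\lambda)/\lambda$; letting $m\to\infty$ along integers (or combining with subadditivity $J(n\lambda)\le$ appropriate sums) gives $e(d,p)\le J(m\lambda)/(m\lambda)<J(\lambda)/\lambda$, which is the stated strict inequality.
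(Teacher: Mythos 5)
Your reduction to the case where the minimum is decomposed into attained pieces is the right first move, but the way you then establish the crucial strict inequality is both unjustified and unnecessarily heavy. You propose to prove $J(2\lambda)<2J(\lambda)$ by doubling the \emph{entire} multi-piece configuration and re-running the tail-interaction analysis of Proposition~\ref{lem:strongBinding} for a many-bump trial state, and you yourself flag the control of the Gram-matrix orthonormalisation and of all cross-interactions as ``where the real work lies'' --- so the central step of your argument is left unproved. It is also not needed: once you have a decomposition $J(\lambda)=\sum_k J(\lambda_k)$ with at least \emph{one} piece $\lambda_1$ whose problem is attained, strictness follows from Corollary~\ref{cor:J(2N)} applied to that single piece together with plain subadditivity (Lemma~\ref{lem:weakBinding}(ii)) for all the others: $J(m\lambda)\le J(m\lambda_1)+\sum_{k\ge2}J(m\lambda_k)\le J(m\lambda_1)+m\sum_{k\ge2}J(\lambda_k)<mJ(\lambda)$, using $J(mn)\le (m-2)J(n)+J(2n)<mJ(n)$ for the attained piece. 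This is exactly the paper's route: by Remark~\ref{rem:binding_noninteger}, absence of a minimiser forces an equality $J(\lambda)=J(k)+J(\lambda-k)$ with $k$ an \emph{integer}, the integer part is decomposed as in~\eqref{eq:decomp_not_in_cN} over $\cN$ (where minimisers exist), and the possibly unattained remainder $J(\lambda-k)$ is simply carried along with the non-strict bound $mJ(\lambda-k)\ge J(m(\lambda-k))$. No new gluing or exponential estimate is required.

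A second, related weakness is your termination argument for the decomposition. You split $\lambda$ into arbitrary positive reals and then invoke a vague dichotomy/small-mass argument to stop the iteration; as stated this is not justified. The tools actually available (Theorem~\ref{th:Lewin} and Remark~\ref{rem:binding_noninteger}) only give failure of the \emph{discrete} binding inequalities, i.e.\ each split removes an integer mass $k\ge1$, so the iteration terminates after at most $\lfloor\lambda\rfloor$ steps for free (and if one insists on all pieces being attained, the final remainder has mass in $(0,1]$ where $J=I$ is attained). Finally, in the last step you should justify $e(d,p)\le J(\mu)/\mu$ for non-integer $\mu$ before letting $m\to\infty$: this follows from monotonicity of $J$, e.g.\ $\frac{J(m\mu)}{m\mu}\ge\frac{J(\lceil m\mu\rceil)}{\lceil m\mu\rceil}\cdot\frac{\lceil m\mu\rceil}{m\mu}\to e(d,p)$, combined with the already-proved strict inequality at $m=2$; as written, passing to the limit in a family of strict inequalities only yields ``$\ge$''.
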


\begin{proof}
    If $J(\lambda)$ has a minimiser, this was already proved in Corollary~\ref{cor:J(2N)}. This covers in particular the case $0\leq\lambda\leq1$. If $J(\lambda)$ does not have a minimiser, then, according to Remark~\ref{rem:binding_noninteger}, there is an integer $1 \le k \le \lambda$ so that $J(\lambda)= J(k) + J(\lambda - k)$. By further decomposing $J(k)$, we can therefore write as in~\eqref{eq:decomp_not_in_cN},
    \[
         J(\lambda)=\sum_{n\in \cN} k_n J(n) + J(\lambda - k), \quad \lambda =\sum_{n \in \cN} k_n n + (\lambda - k), \quad k_n \in \{0, 1\},
    \]
    and at least one $k_n$ has value $1$. So we have, as before
    \[
        mJ(\lambda) = \sum_{n\in \cN} m k_n J(n) + m J(\lambda - k) >
        \sum_{n\in \cN} k_n J(m n) + J(m(\lambda - k)) \ge J(m\lambda).
    \]
    This concludes the proof of Corollary~\ref{lem:decomposition_same_stable}.
\end{proof}

\section{Application: Symmetry breaking for a crystal in the Kohn-Sham model with large Dirac exchange}
\label{sec:rHFD}

In this section we explain how the previous results can be used to prove symmetry breaking for an infinite periodic system, within a simple Kohn-Sham model with a Dirac (a.k.a.~Slater) term. The results of this section are similar to a recent work by Ricaud~\cite{Ricaud-17} on the Thomas-Fermi-von Weis\"acker-Dirac model, so some technical details will be omitted for shortness. The main difference is that we deal with operators instead of functions. Our results can be generalised to other contexts, such as the symmetry breaking in the dissociation of the hydrogen molecule considered in~\cite{Hol_etal-19_ppt}.

\subsection{Notation and main results}
Everywhere in this section we fix the dimension $d = 3$. Let $\cR$ be a lattice of $\R^3$, with unit cell denoted by $\WS$ and dual lattice by $\cR^*$. We consider the infinite system obtained by placing one point nucleus of charge $Z=N$ at each site of this lattice,\footnote{More generally we could place several nuclei of charges $z_1,...,z_M$ in each unit cell, so that the total charge is $\sum_{m=1}^Mz_m=N$.} together with an infinite sea of quantum electrons in a periodic state. We assume that the system is locally neutral, which means that the number of electrons per unit volume must be equal to $N|\WS|^ {-1}$. Our goal is to determine whether these electrons will have the same periodicity $\cR$ as the lattice of the nuclei or whether it is more favourable energetically to place them with a different period. In the latter case we say that there is \emph{spatial symmetry breaking}. More specifically, we will study whether the $(\ell\cR)$-periodic electronic ground state is $\cR$-periodic or not, for $\ell\geq2$. 

We recall that an \emph{$\cR$-periodic density matrix} $\gamma$ is a self-adjoint operator $0\leq \gamma=\gamma^*\leq1$ on $L^2(\R^3)$ (we neglect the spin for simplicity) which commutes with all the translations of the lattice $\cR$:
$$\forall R \in \Lat,\qquad \tau_R \gamma = \gamma \tau_R.$$
Here $\tau_R$ is the unitary operator on $L^2(\R^3)$ defined by $(\tau_R f)(x) := f(x - R)$. We restrict ourselves to density matrices which have a finite trace and a finite kinetic energy per unit volume, which means that $\gamma$ and $\sqrt{-\Delta}\gamma\sqrt{-\Delta}$ are locally trace-class. The density of $\gamma$ is the unique $\Lat$-periodic function $\rho_\gamma\in L^1_{\rm loc}(\R^3,\R_+)$ such that 
$$\tr(\chi\gamma\chi)=\int_{\R^3}\chi(x)^2\rho_\gamma(x)\,\rd x$$
for every $\chi\in L^\ii(\R^3)$ of compact support. Any such density matrix $\gamma$ represents an infinite periodic system of electrons. The number of electrons in each unit cell is defined by
\[
    \VTr_{\Lat}(\gamma) :=\Tr \left(  \1_{\WS} \gamma \1_{\WS} \right) = \int_{\WS} \rho_\gamma(x)\,\rd x.
\]

In this section we work with electronic density matrices which are $(\ell\cR)$-periodic for some $\ell\geq1$. All the previous definitions are easily extended to the case $\ell\geq2$. Our main goal is to determine whether an $(\ell\cR)$-periodic minimiser is necessarily $\cR$-periodic or not. In what follows, we enforce neutrality of the system. So, in any supercell of the type $\ell \WS$, we impose
\[
   \boxed{ \VTr_{\ell \Lat} (\gamma) =\int_{\ell\WS}\rho_\gamma= \ell^3 N. }
\]
The functional to minimise is the Kohn-Sham energy per unit cell which is defined by
\begin{multline} \label{eq:def:EL}
 \cE_{c,\ell}^{\rm KS}(\gamma) := \VTr_{\ell \Lat} \left( - \Delta \gamma  \right) - N \int_{\ell \WS}  G_{\Lat}(x) \rho_\gamma(x)\,\rd x + \frac1{2} D_{\ell \Lat}(\rho_\gamma, \rho_\gamma)\\ - \frac{3c}{4} \int_{\ell \WS} \rho_\gamma(x)^{\frac43}\,\rd x
\end{multline}
for any $(\ell\cR)$-periodic density matrix $\gamma$. 
The first term is the kinetic energy per unit cell $\ell \WS$, interpreted in the sense of quadratic forms. The second term is the interaction between the $(\ell\cR)$-periodic electrons and the lattice $\cR$ of the nuclei of charge $Z=N$. The function $G_\cR$ is the $\Lat$-periodic Green's function, solution to the periodic Laplace equation
\begin{equation}
 -\Delta G_\Lat = 4 \pi \left(\sum_{R \in \Lat} \delta_R -  | \WS |^{-1}\right)  \quad \text{and} \quad \int_\WS G_\Lat = 0.
 \label{eq:periodic_Coulomb}
\end{equation}
In other words, $NG_\cR$ is the Coulomb potential of the infinite lattice of nuclei, screened by a uniform background. The third term in~\eqref{eq:def:EL} is the Coulomb interaction between the electrons in the Hartree approximation and it reads
\[
D_{\ell\Lat}(f,g) := \int_{\ell\WS}\int_{\ell\WS} G_{\ell\Lat}(x - y) f(x) g(y) \rd x\, \rd y
\]
where $G_{\ell\cR}$ is defined similarly as in~\eqref{eq:periodic_Coulomb} with $\cR$ replaced by $\ell\cR$ and $\WS$ replaced by $\ell\WS$. 
Finally, the last term of~\eqref{eq:def:EL} is the Dirac or Slater term. This term is an approximation of the exchange-correlation energy of $\gamma$, in terms of the density $\rho_\gamma$ only. The parameter $c \ge 0$ usually has a fixed value given by physical considerations (for the exchange part of the energy the constant is $c\simeq1.24$ without spin~\cite[Sec.~6.2]{LieSei-09}). Here, we change the value of $c$, and compare the resulting energies for different values of the periodicity $\ell$ of the electrons.  

The minimisation problem to be considered reads 
\begin{equation} \label{eq:def:IL}
\boxed{E^{\rm KS}(c,\ell) := \min \Big\{ \cE^{\rm KS}_{c,\ell}(\gamma), \  \gamma\  \text{$(\ell\cR)$-periodic density matrix, }  \ \int_{\ell\WS}\rho_\gamma = \ell^3N   \Big\}.}
\end{equation}
The existence of minimisers easily follows from the direct method of the calculus of variations, since the problem is posed on the compact set $\ell\WS$~\cite{CatBriLio-01,CanDelLew-08a}. 
An $\cR$-periodic state is of course $(\ell\cR)$-periodic and its Kohn-Sham energy is found to be equal to $\cE^{\rm KS}_{c,\ell}(\gamma)=\ell^3\cE^{\rm KS}_{c,1}(\gamma)$. In particular we deduce that 
$$E^{\rm KS}(c,\ell)\leq \ell^3E^{\rm KS}(c,1)$$
for every $\ell\in\N$ and every $c\geq 0$. 

\begin{definition}[Symmetry breaking]
    We say that there is \emph{spatial symmetry breaking} for the Dirac-Kohn-Sham model with parameter $c \ge 0$ if there exists $\ell \in\N\setminus\{1\}$ such that
    \[
    \boxed{\frac{E^{\rm KS}(c,\ell)}{\ell^3} < E^{\rm KS}(c,1) . }
    \]
\end{definition}

The definition means that an $\ell$-periodic minimiser has a lower energy per unit volume than the $1$-periodic state. It does not mean that the electrons will necessarily be in this $\ell$-periodic state. But at least we can deduce that they will not be $1$-periodic. 

The case $c=0$ is studied at length in~\cite{CatBriLio-01} and in~\cite[App.~A]{CanDelLew-08a}. In this situation the energy $\gamma\mapsto \cE^{\rm KS}_{0,\ell}(\gamma)$ is convex and the problem $E^{\rm KS}(0,\ell)$ admits a unique minimiser $\gamma_{0,\ell}$ for every $\ell\geq1$. This state solves the nonlinear equation
\begin{equation}
\gamma_{0,\ell}=\1\left(-\Delta - N G_\cR + \rho_{\gamma_{0,\ell}} \ast G_{\ell\cR} \leq \eps_{0,\ell}\right)
\label{eq:SCF_periodic}
\end{equation}
where $\eps_{0,\ell}$ is a Lagrange multiplier chosen to enforce the constraint that $\gamma_{0,\ell}$ has $N\ell^3$ electrons per unit cell. In addition, it is unique in the sense that any $(\ell\cR)$-periodic solution to equation~\eqref{eq:SCF_periodic} for some $\eps_{0,\ell}$ with the right number of electrons $N\ell^3$ must be equal to $\gamma_\ell$. Since the $\cR$-periodic state $\gamma_{0,1}$ with $\ell=1$ is a solution for all $\ell\geq2$, it follows that 
$$\gamma_{0,\ell}=\gamma_{0,1}\text{ and } \eps_{0,\ell}=\eps_{0,1},\qquad\text{for all $\ell\in\N$, when $c=0$}$$
and therefore that
$$E^{\rm KS}(0,\ell)=\ell^3E^{\rm KS}(0,1),\qquad\text{for all $\ell\in\N$, when $c=0$}.$$
No symmetry breaking occurs for $c=0$. For latter purposes, we mention that the system is called an \emph{insulator} when $\eps_{0,1}$ can be chosen in a spectral gap of the operator $-\Delta - N G_\cR + \rho_{\gamma_{0,1}} \ast G_{\cR}$ in~\eqref{eq:SCF_periodic} and that it is a \emph{metal} otherwise. Which of the two cases occurs depends on the shape of the lattice $\cR$ and on the number of particles $N$ per unit cell. 

The Dirac term is not convex when $c>0$. It is natural to expect that symmetry will not be broken for $c$ small enough whereas it could be broken for large $c$. This is confirmed by the following result.

\begin{theorem}[Occurrence of symmetry breaking] \label{th:main_periodic}
Let $\cR$ be a lattice in $\R^3$ and $N\in\N$. There is a critical $c^*=c^*(\cR,N)\in[0,\infty)$ such that, for all $c > c^*$, the system breaks spatial symmetry. 
In addition, if the system is insulating at $c = 0$, then $c^* > 0$: there exists $c_0^*>0$  such that $E^{{\rm KS}}(\ell, c) = \ell^3E^{\rm KS}(1, c)$ for all $\ell \in \N$ and all $c<c_0^*$ .
\end{theorem}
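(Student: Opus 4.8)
The plan is to establish the two assertions separately, adapting to density matrices the method used by Ricaud~\cite{Ricaud-17} for the Thomas--Fermi--von Weizs\"acker model.

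\emph{Proof that $c^*<\infty$.} The crucial algebraic fact is that in dimension $d=3$ with $p=4/3$ one has $d(p-1)=1$, so that dilating a trace-class operator $\gamma$ with $\Tr\gamma=m$ by a factor $a$ turns the pure kinetic--Dirac energy into $a^2\Tr(-\Delta\gamma)-\frac{3c}4\,a\int_{\R^3}\rho_\gamma^{4/3}$; optimising over $a$, then over $\gamma$, gives the identity $\inf\{\Tr(-\Delta\gamma)-\frac{3c}4\int_{\R^3}\rho_\gamma^{4/3}:0\le\gamma=\gamma^*\le1,\ \Tr\gamma=m\}=c^2J(m)$. I would then prove that $E^{\rm KS}(c,\ell)=c^2J(\ell^3N)+o(c^2)$ as $c\to\infty$, for each fixed $\ell$. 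For the upper bound, take the minimiser $\gamma_0$ of $J(\ell^3N)$ (which exists by Theorem~\ref{thm:ground_states_orthonormal_small_p} since $4/3<p_c(3)$), dilate it by the optimal factor $a=c$, place the resulting state inside $\ell\WS$ away from $\cR$ and periodise it over $\ell\cR$: by the exponential decay of Lemma~\ref{lem:exp_decay_minimiser} the orthonormalisation corrections are exponentially small, and the nuclear, Hartree and self-interaction terms are $O(c)=o(c^2)$, so that $\cE^{\rm KS}_{c,\ell}(\gamma^{\rm per})=c^2\cE(\gamma_0)+o(c^2)=c^2J(\ell^3N)+o(c^2)$. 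For the matching lower bound I would use the scale-invariant inequality~\eqref{eq:other_inequality} of Remark~\ref{rmk:rescale}, which for $d=3$, $p=4/3$ reads $m\,\Tr(-\Delta\gamma)\ge c(3,4/3,m)(\int\rho_\gamma^{4/3})^2$ with the sharp constant $c(3,4/3,m)=\frac{9m}{64|J(m)|}$; transferred to the $(\ell\cR)$-periodic setting with the \emph{same} constant by a localisation argument (periodised clusters being admissible test states forces the periodic best constant to equal the whole-space one), it gives $\VTr_{\ell\cR}(-\Delta\gamma)-\frac{3c}4\int_{\ell\WS}\rho_\gamma^{4/3}\ge c^2J(\ell^3N)$, while the remaining terms of $\cE^{\rm KS}_{c,\ell}$ are $\ge-o(c^2)$ (drop the nonnegative Hartree term, bound $-N\int G_\cR\rho_\gamma$ by $-C-C\,\VTr(-\Delta\gamma)^{3/5}$ via Lieb--Thirring, and dispose of the regime of very large kinetic energy where the $\VTr(-\Delta\gamma)$ term alone already dominates). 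Granting the asymptotics, symmetry breaking holds at $\ell=2$: since $J(8N)<8J(N)$ by Corollary~\ref{lem:decomposition_same_stable}, we get $E^{\rm KS}(c,2)=c^2J(8N)+o(c^2)<8c^2J(N)+o(c^2)=8E^{\rm KS}(c,1)+o(c^2)$, hence $E^{\rm KS}(c,2)<8E^{\rm KS}(c,1)$ for $c$ large; this provides a finite $c^*$.

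\emph{Proof that $c_0^*>0$ in the insulating case.} Here the goal is to show that for $c$ small the minimiser $\gamma_{c,\ell}$ of $E^{\rm KS}(c,\ell)$ is $\cR$-periodic; since an $\cR$-periodic state satisfies $\cE^{\rm KS}_{c,\ell}=\ell^3\cE^{\rm KS}_{c,1}$, this gives $E^{\rm KS}(c,\ell)\ge\ell^3E^{\rm KS}(c,1)$, hence equality with the always-true reverse bound. Write $\cE^{\rm KS}_{c,\ell}=\cF_\ell+\cD_{c,\ell}$ with $\cF_\ell$ the $c$-independent convex part (kinetic $+$ nuclear $+$ Hartree) and $\cD_{c,\ell}(\gamma)=-\frac{3c}4\int_{\ell\WS}\rho_\gamma^{4/3}$ concave and $O(c)$. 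At $c=0$ the unique minimiser is the Fermi projection $\gamma_{0,1}=\1(H_0\le\mu_0)$ of $H_0=-\Delta-NG_\cR+\rho_{\gamma_{0,1}}\ast G_\cR$, and the insulating hypothesis means $\mu_0$ lies in a spectral gap of width $g>0$; this yields the \emph{linear} coercivity $\cF_\ell(\gamma)-\ell^3E^{\rm KS}(0,1)\ge\VTr_{\ell\cR}((H_0-\mu_0)(\gamma-\gamma_{0,1}))\ge\tfrac g2\VTr_{\ell\cR}|\gamma-\gamma_{0,1}|$ (convexity of $\cF_\ell$, then the gap: emptying an occupied band and filling an empty one costs at least $g$ per electron). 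As $c\to0$, compactness and the uniqueness of the $c=0$ minimiser force $\gamma_{c,\ell}\to\gamma_{0,1}$, so for $c$ small the self-consistent operator $H_{c,\ell}=-\Delta-NG_\cR+\rho_{\gamma_{c,\ell}}\ast G_{\ell\cR}-c\,\rho_{\gamma_{c,\ell}}^{1/3}$ still has a gap around $\mu_0$ (the perturbation $\rho^{1/3}$ being form-bounded with small bound), $\gamma_{c,\ell}$ is the spectral projection onto the bands below it, and the self-consistent density map is a contraction near $\rho_{\gamma_{0,1}}$ (because of the gap and the convexity at $c=0$) whose unique fixed point in the $(\ell\cR)$-periodic class is automatically $\cR$-periodic; hence $\gamma_{c,\ell}$ is $\cR$-periodic.

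\emph{Main obstacle.} The delicate point is the sharp lower bound $E^{\rm KS}(c,1)\ge c^2J(N)+o(c^2)$ in Part 1: a bound using only the Lieb--Thirring inequality yields the energy $c^2N\,e_{\rm LT}(3,4/3)$, which is not enough because $J(\ell^3N)/\ell^3\ge N\,e(3,4/3)\ge N\,e_{\rm LT}(3,4/3)$, so one genuinely needs the sharp constant of~\eqref{eq:other_inequality} in the periodic setting. Proving this amounts to a careful localisation argument; the right way to cope with the shrinking length scale $1/c$ is to localise on the \emph{fixed} lattice $\cR$ (so that the IMS localisation error is only $O(N)=o(c^2)$) and to absorb the overlap contributions using the scale invariance of~\eqref{eq:other_inequality}, exactly as in~\cite{Ricaud-17}. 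In Part 2 the only non-routine estimate is the Lipschitz control of $\int\rho_\gamma^{4/3}$ in the coercivity norm, obtained by splitting $\{\rho_\gamma\le M\}\cup\{\rho_\gamma>M\}$ and using the uniform $L^3$-bound on low-energy competitors; the remaining arguments are transcriptions of the convexity-and-gap scheme of~\cite{Ricaud-17} to the operator setting.
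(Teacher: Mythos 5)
Your overall architecture coincides with the paper's: prove the asymptotics $E^{\rm KS}(c,\ell)=c^2 J(\ell^3N)+o(c^2)$ (Proposition~\ref{lem:periodic_and_limiting}) and combine it with $J(\ell^3N)<\ell^3 J(N)$ to get symmetry breaking for large $c$, then prove rigidity for small $c$ in the insulating case. The upper bound and the final deduction are fine. The genuine gap in Part 1 is the lower bound. Your mechanism for transferring the sharp inequality~\eqref{eq:other_inequality} to the periodic setting is backwards: inserting periodised whole-space clusters as test states only shows that the periodic best constant is \emph{no larger} than the whole-space one, which is the trivial direction; the lower bound needs the opposite statement, that the periodic problem cannot beat the whole-space constant asymptotically. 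Your fallback (IMS localisation on the fixed lattice) controls the kinetic localisation error, but not the Dirac term: for a partition $\sum_i\chi_i^2=1$ one has $\int\rho^{4/3}\geq\sum_i\int(\chi_i^2\rho)^{4/3}$, i.e.\ localisation \emph{loses} Dirac energy in the overlap regions, and a blob concentrated at scale $1/c$ sitting on a cutoff boundary produces an error of order $c^2$ unless the partition is chosen adaptively; after splitting you must also recombine pieces of arbitrary masses, which requires the subadditivity and monotonicity of $J$ from Lemma~\ref{lem:weakBinding}. This is exactly the work the paper delegates to an operator-valued bubble (profile) decomposition as in~\cite{HonKwoYoo-19} and~\cite{Ricaud-17}, after a priori bounds giving $\VTr(-\Delta\gamma_c)=O(c^2)$ and Coulomb terms $O(c)$; the Dirac term then decomposes exactly (subcritical power) and one concludes via $\sum_i J(\Tr\gamma^{(i)})\geq J\big(\sum_i\Tr\gamma^{(i)}\big)\geq J(N)$.

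In Part 2 there are two problems. First, the claimed linear coercivity $\VTr_{\ell\cR}\big((H_0-\varepsilon_0)Q\big)\geq\tfrac{g}{2}\VTr_{\ell\cR}|Q|$ is false: the gap only controls the diagonal blocks, $\VTr\big((H_0-\varepsilon_0)Q\big)\geq\tfrac{g}{2}\VTr(Q^{++}-Q^{--})$, and for a difference of projections $Q^{++}-Q^{--}$ is \emph{quadratic} in the off-diagonal part (two rank-one projections at angle $\theta$ give $\Tr(Q^{++}-Q^{--})=2\sin^2\theta$ while $\Tr|Q|=2|\sin\theta|$). This is precisely why the paper works with Bach's inequality $Q^2\leq Q^{++}-Q^{--}$ from~\cite{BacBarHelSie-99} and must control $\int\min(\rho_Q^2,\rho_Q^{4/3})$, using Lieb--Thirring for the diagonal densities and the duality estimate~\eqref{eq:estim_Q_+-} for $\rho_{Q^{+-}},\rho_{Q^{-+}}$; your Lipschitz bound on $\int\rho^{4/3}$ would have to be measured against this quadratic coercivity, which changes the structure of the smallness argument. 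Second, uniformity in $\ell$: the statement requires a single $c_0^*$ valid for all $\ell\in\N$, whereas your scheme (per-$\ell$ convergence $\gamma_{c,\ell}\to\gamma_{0,1}$ plus a contraction/fixed-point argument) produces a threshold depending on $\ell$ unless all estimates are proved with $\ell$-independent constants. The paper avoids this by establishing the gap, the lower bound on the density and the equation only for the $\cR$-periodic minimiser $\gamma_c$ (so the constants depend on $c$ alone) and then proving the energy-difference lower bound against \emph{arbitrary} $(\ell\cR)$-periodic competitors with constants explicitly independent of $\ell$.
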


The spirit of the result is exactly the same as~\cite{Ricaud-17} in the Thomas-Fermi-von Weis\"acker-Dirac case. For $c$ very large the kinetic energy and the Dirac term dominate, the other terms being of lower order. The very large constant $c$ has the effect of concentrating the electrons at the scale $1/c$. After rescaling length by a factor $1/c$ about a blow-up point, in the limit the problem converges to the fermionic NLS problem in the whole space with $p=4/3$. This is the content of the following result.  

\begin{proposition}[Convergence to the NLS problem in $\R^3$] \label{lem:periodic_and_limiting}
Let $\cR$ be a lattice in $\R^3$ and $N\in\N$. For all $\ell \in \N$, we have
    \[
    \lim_{c \to \infty} \frac{E^{{\rm KS}}(c,\ell)}{c^2}  = J( \ell^3 N)
    \]
   where $J(\ell^3N)$ is defined as in~\eqref{eq:def_J_integers} with $d = 3$ and $p = 4/3$.
\end{proposition}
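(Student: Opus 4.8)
The statement says the Kohn--Sham energy per cell, suitably rescaled, converges to the fermionic NLS energy $J(\ell^3N)$ in $\R^3$ at $p=4/3$. The mechanism is that a large coupling $c$ forces the electrons to concentrate at the length scale $1/c$: under the dilation $\gamma\mapsto\tilde\gamma$ with kernel $\tilde\gamma(x,y)=c^{-3}\gamma(x/c,y/c)$ (which turns an $(\ell\Lat)$-periodic operator into a $(c\ell\Lat)$-periodic one), the kinetic term satisfies $\VTr_{\ell\Lat}(-\Delta\gamma)=c^2\,\Tr(-\Delta\tilde\gamma)$ and the Dirac term satisfies $\int_{\ell\WS}\rho_\gamma^{4/3}=c\int_{c\ell\WS}\rho_{\tilde\gamma}^{4/3}$, so that $\VTr(-\Delta\gamma)-\tfrac{3c}{4}\int_{\ell\WS}\rho_\gamma^{4/3}=c^2\big(\Tr(-\Delta\tilde\gamma)-\tfrac1p\int\rho_{\tilde\gamma}^p\big)=c^2\cE(\tilde\gamma)$ with $p=4/3$; meanwhile the nucleus--electron attraction and the Hartree term turn out to be of order $o(c^2)$. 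The proof is therefore a matching upper and lower bound.

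For the upper bound I would take a minimiser of $J(\ell^3N)$ (it exists since $4/3<p_c(3)$ by Theorem~\ref{thm:ground_states_orthonormal_small_p}, but an $\varepsilon$-minimiser suffices), given by an orthonormal family $u_1,\dots,u_{\ell^3N}$ which one may assume compactly supported in $H^1(\R^3)$. Set $u_j^c(x):=c^{3/2}u_j(cx)$, supported in a ball of radius $O(1/c)$ which for $c$ large lies inside a single cell; the $(\ell\Lat)$-periodisation of $\sum_j|u_j^c\rangle\langle u_j^c|$ is an admissible orthogonal projection $\Gamma^c$ with $\VTr(-\Delta\Gamma^c)-\tfrac{3c}{4}\int_{\ell\WS}\rho_{\Gamma^c}^{4/3}=c^2\,\cE(u_1,\dots,u_{\ell^3N})$. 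Placing the bubble away from the nuclei makes the nucleus--electron term $O(1)$, and $D_{\ell\Lat}(\rho_{\Gamma^c},\rho_{\Gamma^c})=O(c)$ because $G_{\ell\Lat}(z)=|z|^{-1}+O(1)$ near $z=0$. Dividing by $c^2$, letting $c\to\infty$ and then removing the approximation error gives $\limsup_{c\to\infty}c^{-2}E^{\rm KS}(c,\ell)\le J(\ell^3N)$.

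For the lower bound, let $\gamma_c$ be a minimiser of $E^{\rm KS}(c,\ell)$. Using the Kato-type bound $N\int_{\ell\WS}G_\Lat\rho_{\gamma_c}\le\varepsilon\VTr(-\Delta\gamma_c)+C_\varepsilon$ (here $G_\Lat\in L^{3/2}(\WS)+L^\infty$), the positivity of $D_{\ell\Lat}$, the Lieb--Thirring inequality $\VTr(-\Delta\gamma_c)\ge c_{\rm LT}\int\rho_{\gamma_c}^{5/3}$, the scale-invariant interpolation $\int\rho^{4/3}\le C\,(\VTr\gamma)^{1/2}(\VTr(-\Delta\gamma))^{1/2}$, and the upper bound just proved (which gives $E^{\rm KS}(c,\ell)\le-\delta c^2$ for $c$ large), a short bootstrap yields the a priori estimates $\VTr(-\Delta\gamma_c)\le Cc^2$ and hence $\|\rho_{\gamma_c}\|_{L^{5/3}(\ell\WS)}\le Cc^{6/5}$. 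Writing $G_\Lat=|x-R|^{-1}+O(1)$ near each of its $\ell^3$ singularities in $\ell\WS$ and using $\||x|^{-1}\|_{L^{5/2}(B_\delta)}<\infty$, the nucleus term is $O(c^{6/5})=o(c^2)$ (and bounded below since $G_\Lat$ is bounded below), while $D_{\ell\Lat}(\rho_{\gamma_c},\rho_{\gamma_c})\ge0$ is dropped. After dilation one gets $c^{-2}\cE^{\rm KS}_{c,\ell}(\gamma_c)\ge\cE(\tilde\gamma_c)+o(1)$, where $\tilde\gamma_c$ is a density matrix ($0\le\tilde\gamma_c\le1$, $\Tr\tilde\gamma_c=\ell^3N$, $\sup_c\Tr(-\Delta\tilde\gamma_c)<\infty$) on the torus $\R^3/(c\ell\Lat)$, whose cell expands to $\R^3$. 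It remains to prove $\liminf_c\cE(\tilde\gamma_c)\ge J(\ell^3N)$, which is a concentration--compactness argument for operators in the spirit of~\cite{FraLieSeiSie-07,Lewin-11,Ricaud-17}: \emph{vanishing} is excluded because a density spreading over a volume $\to\infty$ forces $\int\rho^{4/3}\to0$ and $\cE(\tilde\gamma_c)\to\Tr(-\Delta\tilde\gamma_c)\ge0>J(\ell^3N)$; \emph{dichotomy} splitting the mass as $\ell^3N=\Lambda_1+\Lambda_2$ yields in the limit at least $J(\Lambda_1)+J(\Lambda_2)\ge J(\ell^3N)$ by subadditivity of $J$ (Lemma~\ref{lem:weakBinding}(ii)); and in the \emph{compact} case $\tilde\gamma_c$ can, after translation, be localised in a fixed ball, compared with $J$ on $\R^3$ via an IMS cutoff, and the conclusion follows from the continuity and monotonicity of $\lambda\mapsto J(\lambda)$ (Lemma~\ref{lem:weakBinding}(iii)).

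The main obstacle is the lower bound, in two respects. First, one must show that the long-range Coulomb contributions --- the singular periodic nuclear potential and the Hartree term --- are only $o(c^2)$, which requires the a priori Lieb--Thirring bounds on $\rho_{\gamma_c}$ together with a careful handling of the $|x|^{-1}$ singularities of $G_\Lat$ and $G_{\ell\Lat}$ uniformly in $c$. Second, and conceptually, after dilation the competitor $\tilde\gamma_c$ lives on a torus growing to $\R^3$, so one cannot simply assert that a torus density matrix has energy $\ge J(\ell^3N)$; the concentration--compactness trichotomy is genuinely needed, and the decisive input is the subadditivity $J(\Lambda_1+\Lambda_2)\le J(\Lambda_1)+J(\Lambda_2)$, which prevents breaking the cluster apart from lowering the limiting energy below $J(\ell^3N)$. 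Note that only subadditivity is used here; the strict binding inequalities enter later, in Theorem~\ref{th:main_periodic}, to compare $J(\ell^3N)$ with $\ell^3J(N)$.
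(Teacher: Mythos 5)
Your proposal is correct and follows essentially the same route as the paper: the same $c$-scaling identifying the kinetic and Dirac terms with $c^2\,\cE$ at $p=4/3$, an upper bound from a rescaled compactly supported (quasi-)minimiser of $J(\ell^3 N)$, a priori bounds showing the nuclear and Hartree terms are $o(c^2)$, and a concentration--compactness step on the torus expanding to $\R^3$ concluded by the subadditivity and monotonicity of $J$ from Lemma~\ref{lem:weakBinding}. The only differences are cosmetic: you run the a priori estimates through a Kato-type bound, Lieb--Thirring and interpolation where the paper uses Hardy and Hoffmann--Ostenhof/Gagliardo--Nirenberg, and you phrase the final step as the Lions trichotomy while the paper packages it as the operator bubble decomposition of~\cite{HonKwoYoo-19}, which automatically handles iterated dichotomy and lost mass.
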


In Theorem~\ref{thm:ground_states_orthonormal_all_p} and in Corollary~\ref{lem:decomposition_same_stable} we have proved that $J(\ell^3N)<\ell^3J(N)$ for all $\ell\in\N\setminus\{1\}$. This shows that for $c$ large enough $E^{{\rm KS}}(c,\ell)<\ell^3E^{{\rm KS}}(c,1)$, hence that there is symmetry breaking. The intuitive picture is that it is more favourable to concentrate $\ell^3N$ particles at one point rather than having $\ell^3$ bumps of $N$ concentrated electrons, as is the case for the $\cR$-periodic minimiser placed in the $(\ell\cR)$-periodic energy.  This is how we can prove the first part of Theorem~\ref{th:main_periodic}.

\begin{remark}
    The previous result does not use that $p=4/3 < p_c(3)$. However, since this inequality has been numerically found to hold (see Remark~\ref{rmk:comput_p_c}), minimisers for $J(N)$ always exist, and one can say more. Following the approach of~\cite{Ricaud-17}, it is possible to prove that minimisers $\gamma_c$ for $E^{\rm KS}(c,1)$ satisfy $U_c\gamma_c U_{c^{-1}}\rightharpoonup \gamma$ weakly-$\ast$ locally in the trace class, where $\gamma$ minimises $J(N)$ and $U_c$ is the dilation operator defined by $(U_cf)(x)=c^{3/2}f(cx)$. In other words, the electrons concentrate at the origin where the nucleus is placed, in the unit cell $\WS$. For $E^{\rm KS}(c,\ell)$ the result is similar but the $\ell^3N$ electrons concentrate at one of the $\ell^3$ nuclei of the larger unit cell $\ell\WS$. Finally, we have the expansion
\begin{multline*}
E^{\rm KS}(c,\ell)= J( \ell^3 N)c^2\\
+c\min_{\substack{\gamma\ {\rm min.}\\ {\rm for }\ J(\ell^3N)}}\left(-\int_{\R^3}\frac{\rho_\gamma(x)}{|x|}\,\rd x+\frac12\iint_{\R^3\times\R^3}\frac{\rho_\gamma(x)\rho_\gamma(y)}{|x-y|}\,\rd x\,\rd y\right)+o(c).
\end{multline*}
\end{remark}

The literature contains several results in the same spirit as Theorem~\ref{th:main_periodic} and Proposition~\ref{lem:periodic_and_limiting}. The closest to our work is~\cite{Hol_etal-19_ppt} which studies the case of the Kohn-Sham hydrogen molecule (two electrons in the field of two nuclei separated by a distance $R$). This corresponds to $N=2$ but since the spin is taken into account there is no orthogonality constraint between $u_1$ and $u_2$. In this model, spin symmetry breaking arises in the limit $c\to\ii$ because each electron has to concentrate about one of the two nuclei, where it asymptotically solves the NLS problem $I(1)$. This is therefore a completely different phenomenon from this present work, where the two particles concentrate at the \emph{same} point. Other works in the same spirit include for instance~\cite{AshFroGraSchTro-02,GuoSei-14,GouZenZho-16} for the \emph{Hartree model} in multiple well potentials, which also has no orthogonality constraint. 

In the next section we outline the proof of Proposition~\ref{lem:periodic_and_limiting} whereas in Section~\ref{sec:small_c} we quickly discuss the absence of symmetry breaking for $c$ small enough, under the additional assumption that the system is an insulator at $c=0$ (second part of Theorem~\ref{th:main_periodic}).

\subsection{Sketch of the proof of Proposition~\ref{lem:periodic_and_limiting} and of the first part of Theorem~\ref{th:main_periodic}}
The symmetry breaking stated in the first part of Theorem~\ref{th:main_periodic} follows immediately from Proposition~\ref{lem:periodic_and_limiting}, the proof of which we outline in this section.

We set for simplicity $\ell = 1$ (the proof is similar in the general case). For $\check{\gamma}$ an $\Lat$-periodic density matrix, we call $\gamma_c=U_c\check\gamma U_{c^{-1}}$ the rescaled operator whose kernel is
\begin{equation} \label{eq:def:scaling}
\gamma_c(x, y) := c^{-3} \check{\gamma} ( x/c,y/c).
\end{equation}
Using that $G_{c \Lat}(x) = c^{-1} G_\Lat(c^{-1} x)$, we obtain the following scaling relations:
$$ \VTr_{c \Lat}( \gamma_c ) = \VTr_{\Lat}( \check{ \gamma} ), \qquad \int_{c \WS} G_{c \Lat} \rho_c  = \frac{1}{c} \int_{\WS} G_\Lat \check{\rho},\qquad D_{c \Lat}(\rho_c, \rho_c)  = \frac{1}{c} D_\Lat(\check{\rho}, \check{\rho}),$$
$$\VTr_{c \Lat}( - \Delta \gamma_c)  = \frac{1}{c^2} \VTr_{\Lat}( - \Delta \check{ \gamma} ),\qquad \int_{ c \WS} \rho_c^{4/3}  = \frac{1}{c^2} \left( c \int_{\WS} \check{\rho}^{4/3} \right).$$
We deduce that the energy of $\check{\gamma}$ can be re-expressed as
\begin{equation} \label{eq:def:Erescaled}
\cE^{\rm KS}_{c,1}(\check{\gamma})  := c^2 \cE_{c \Lat}( \gamma_c ) + c \cF_{c \Lat}(\gamma_c),
\end{equation}
with
\[
\cE_{c \Lat}(\gamma_c) := \VTr_{c \Lat} ( - \Delta \gamma_c) - \frac{3}{4}\int_{c \WS} \rho_c^{4/3}, \qquad
\cF_{c \Lat}(\gamma_c) := -N\int_{c \WS} G_{c \Lat} \rho_c + D_{c \WS}(\rho_c, \rho_c).
\]
The energy $\cE_{c \Lat}$ is similar to the NLS energy $\cE$ in~\eqref{eq:def_energy_gamma} except that the problem is restricted to the flat torus of size $c$, instead of being posed over the whole of $\R^3$. 

\subsubsection*{\bf Step 1.} Let us first prove that 
\begin{equation}
\limsup_{c \to \infty} \frac{E^{{\rm KS}}(c,1)}{c^2} \le  J(N) 
\label{eq:limsup}
\end{equation}
Let $\gamma$ be a smooth rank-$N$ projector of compact support such that\footnote{To obtain such a $\gamma$ one can start with a trial state $\gamma=\sum_{i=1}^N|u_i\rangle\langle u_i|$ and then truncate and regularise the $u_i$'s. The new functions can be orthonormalised using the same procedure as in Lemma~\ref{lem:weakBinding}.} $\cE(\gamma)\leq J(N)+\eps$.
This state can be used as a trial state in the rescaled box $c\WS$, as soon as its support is strictly included in $c\WS$. This amounts to $(c\cR)$-periodising $\gamma$ in the manner $\sum_{ R \in \Lat} \tau_{c R}^* \gamma  \tau_{cR}$.
Then $\cE_{c \Lat}(\gamma)=\cE(\gamma)$ whereas 
$$\lim_{c\to\ii}\cF_{c \Lat}(\gamma)=-N\int_{\R^3}\frac{\rho_\gamma(x)}{|x|}\,\rd x+\frac12 \iint_{\R^3\times\R^3}\frac{\rho_\gamma(x)\rho_\gamma(y)}{|x-y|}\rd x\,\rd y.$$
Hence  
$$E^{\rm KS}(c,1)\leq c^2(J(N)+\eps)+O(c).$$
The claimed bound~\eqref{eq:limsup} follows after taking $c\to\ii$ and then $\eps\to0$.

\subsubsection*{\bf Step 2.} To prove the other inequality 
\begin{equation}
\liminf_{c \to \infty} \frac{E^{{\rm KS}}(c,1)}{c^2} \ge  J(N) 
\label{eq:liminf}
\end{equation}
we consider a minimiser $\check\gamma_c$ for $E^{{\rm KS}}(c,1)$ and call $\gamma_c$ the rescaled operator as in~\eqref{eq:def:scaling}. From the previous step and the positivity of the Hartree term, we have for $c$ large enough
    \begin{equation} \label{eq:ineq_for_control_kinetic}
        \VTr_\Lat( - \Delta \check{\gamma_c}) - N \int_{\WS} G_\Lat \check{\rho_c} - \frac{3c}{4} \int_{\WS} \check{\rho_c}^{4/3} \le c^2\frac{J(N)}2.
    \end{equation}
    Using the Gagliardo-Nirenberg and Hoffmann-Ostenhof periodic inequalities, we have
    \[
        \int_{\WS}\check{\rho_c}^{\frac43}  \le  C_1 N^{\frac56} \left(\int_{\WS}\rho_c+ \int_{\WS} | \nabla  \sqrt{\rho_c} |^2 \right)^{\frac12} \le  C_2 N^{\frac56} \Big(N+\VTr_\Lat( - \Delta \check{\gamma_c})\Big)^{\frac12}.
    \]
   Similarly, to control the potential energy, we use that $G_{\Lat}\leq |x|^{-1}+C$, and obtain by Hardy's inequality 
    $$\int_{\WS} G_\Lat \check{\rho_c} \leq C_2 \sqrt{N} \Big(N+\VTr_\Lat( - \Delta \check{\gamma_c})\Big)^{\frac12}.$$
    Inserting in~\eqref{eq:ineq_for_control_kinetic} this gives $\VTr( - \Delta \check{\gamma_c})=O(c^2)$ and hence after scaling we obtain
   $$\VTr( - \Delta \gamma_c)=O(1),\qquad \int_{c\WS}G_{c \Lat} \rho_c=O(1).$$
   This gives
  $$ E^{\rm KS }(c,1)\geq c^2\cE_{c \Lat}(\gamma_c)+O(c).$$
    
The last step is to show that 
\begin{equation}
 \liminf_{c\to\ii}\cE_{c \Lat}(\gamma_c)\geq J(N).
 \label{eq:liminf_final}
\end{equation}
To prove~\eqref{eq:liminf_final} we decompose $\gamma_c$ into bubbles. We use the operator version of the bubble decomposition, which has implicitly appeared several times in the literature and can be read with full details in the recent work~\cite[Theorem 3.1]{HonKwoYoo-19}. The present setting is slightly different from~\cite{HonKwoYoo-19} due to the periodic boundary condition but the proof is similar, see, e.g.,~\cite{Ricaud-17} in the case of functions. For operators the result is that there exists a sequence of density matrices $\{ \gamma^{(1)}, \gamma^{(2)}, \cdots \}$ over $\R^3$ with $\Tr( - \Delta \gamma^{(i)}) < \infty$ such that 
$$N\geq \sum_i \Tr(\gamma^{(i)}),$$
$$\liminf_{c\to\ii}\VTr_{c \Lat} ( - \Delta \gamma_c) \geq \sum_{i}\Tr(-\Delta \gamma^{(i)})$$
and
$$\lim_{c\to\ii}\int_{c\WS}\rho_{\gamma_c}^{\frac43}=\sum_{i}\int_{\R^3}\rho_{\gamma^{(i)}}^{\frac43}.$$
The Dirac term decomposes exactly since $4/3$ is a sub-critical power, whereas for the mass and the kinetic energy one only obtains lower bounds. The missing mass and kinetic energy are contained in the \emph{vanishing} part of $\gamma_c$, to employ the vocabulary of the concentration-compactness method. Each $\gamma^{(i)}$ is constructed as the strong local limit of $\chi_{i,c}(\cdot+x_{i,c})\gamma_c\chi_{i,c}(\cdot+x_{i,c})$ for some translation $x_{i,c}$ and some localisation function $\chi_{i,c}$, with $|x_{i,c}-x_{j,c}|\to\ii$ when $i\neq j$, up to subsequences. Using the subadditivity of $J$ proved in Lemma~\ref{lem:weakBinding}, we deduce that 
\begin{equation}
\liminf_{c\to\ii}\cE_{c\cR}(\gamma_c)\geq \sum_{i}\cE(\gamma^{(i)})\geq \sum_i J\big(\Tr(\gamma^{(i)})\big)\geq J\left(\sum_i\Tr(\gamma^{(i)})\right)\geq J(N).
\label{eq:decomp_bubble_energy}
\end{equation}
This concludes our sketch of the proof of Proposition~\ref{lem:periodic_and_limiting}.\qed


\subsection{Proof of the second point of Theorem~\ref{th:main_periodic}: stability for small $c$}\label{sec:small_c}
When $c = 0$, we have recalled from~\cite[App.~A]{CanDelLew-08a} that the minimisation problem $E^{\rm KS}(0,1)$ admits a unique minimiser, which we denote here by $\gamma_0$ (it was called $\gamma_{0,1}$ above). It solves the nonlinear operator equation
\begin{equation}
     \gamma_0 = \1(H_0 \le \varepsilon_0)
 \label{eq:H_0}
\end{equation}
where $\eps_0$ is a Lagrange multiplier and 
\[
    H_{0} := - \Delta - N G_{\Lat}  + \rho_{0} \ast G_{\Lat}.
\]
The assumption that the system is an insulator means that $\eps_0$ belongs to a spectral gap of the operator $H_0$. For simplicity, we denote by
$$a:=\max\sigma(H_0)\cap (-\ii,\eps_0), \qquad b:=\min\sigma(H_0)\cap (\eps_0,\ii)$$
and, without loss of generality, we can choose 
\[
    \eps_0=\frac12 (a+b).
\]
The length of the gap is $g:=b-a>0$. Let us prove that $\rho_0>0$. After a Bloch-Floquet transform~\cite{ReeSim4}, the equation~\eqref{eq:H_0} means that the kernel of $\gamma_0$ is given by
$$\gamma_0(x,y)=\sum_{n\geq0}\int_B \1(\lambda_n(\xi)\leq\eps_0)\, u_n(\xi,x)\overline{u_n(\xi,y)}\,\rd \xi$$
with the density 
$$\rho_0(x)=\sum_{n\geq0}\int_B \1(\lambda_n(\xi)\leq\eps_0)\, |u_n(\xi,x)|^2\rd \xi.$$
Here $B$ is the Brillouin zone (the unit cell of the dual lattice $\cR^*$) and $(u_n,\lambda_n)$ are the Bloch eigenfunctions and (ordered) eigenvalues, which solve
$$\left(|-i\nabla_x+\xi|^2- N G_{\Lat}  + \rho_{0} \ast G_{\Lat}\right)u_n(\xi,\cdot)=\lambda_n(\xi)\,u_n(\xi,\cdot)$$
with periodic boundary conditions on $\partial\WS$. By Perron-Frobenius we have $u_0>0$ and $\lambda_0(0)<\lambda_1(0)$. By perturbation theory we then deduce that $\lambda_0(\xi)$ is non-degenerate with a positive eigenfunction $u_0(\xi,\cdot)$, for $\xi$ small enough. Then $\rho_0>0$ and in the following we denote by 
\[
\alpha := \min_{\WS} \rho_{0} > 0
\]
the minimal value of the periodic density. 

The following shows that the gap does not close and the density stays strictly positive for $c$ small enough.

\begin{lemma}[Stability of the gap]\label{lem:stability_gap}
There is $c_1 > 0$ such that, for all $0 \le c < c_1$, any minimiser $\gamma_c$ for $E^{\rm KS}(c,1)$ satisfies 
    \[
         \min_{\WS} \rho_{c} > \frac{\alpha}{2}, \quad \text{and} \quad
         {\rm dist} \left( \sigma( H_{c} ) , \varepsilon_0 \right) > \frac{g}{4},
    \]
    where we set $\rho_c := \rho_{\gamma_c}$, and
    \[
        H_c := - \Delta - N G_{\Lat} + \rho_c * G_\Lat - c \rho_c^{1/3}.
    \]
    Finally, we have $\gamma_c = \1 (H_c < \varepsilon_0)$ and there is $C > 0$ independent of $c<c_1$ so that the following operator inequality holds:
    \begin{equation} \label{eq:H-epsF}
        C^{-1} (1 - \Delta)\leq | H_{c} - \varepsilon_F | \leq C (1 - \Delta).
    \end{equation}
\end{lemma}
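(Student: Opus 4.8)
The strategy is a soft continuity argument around the unique $c=0$ minimiser $\gamma_0$, exploiting that the whole problem is posed on the compact cell $\WS$. The first ingredient is a uniform a priori bound: there is $M>0$ such that $\VTr_\Lat(-\Delta\gamma_c)\le M$ for every minimiser $\gamma_c$ of $E^{\rm KS}(c,1)$ and every $c\in[0,c_1)$. This is obtained exactly as in the proof of Proposition~\ref{lem:periodic_and_limiting}: testing the energy with $\gamma_0$ gives $E^{\rm KS}(c,1)\le \cE^{\rm KS}_{c,1}(\gamma_0)=E^{\rm KS}(0,1)-\tfrac{3c}{4}\int_\WS\rho_0^{4/3}$, and one controls the Dirac term by the periodic Gagliardo--Nirenberg and Hoffmann--Ostenhof inequalities and the nuclear attraction $\int_\WS G_\Lat\rho_c$ by Hardy's inequality, absorbing the kinetic energy. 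In particular $\sqrt{\rho_c}$ stays bounded in $H^1(\WS)$ and $\rho_c$ bounded in $L^3(\WS)$, uniformly in $c<c_1$.

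Next I would show $\gamma_c\to\gamma_0$ as $c\to0$. Writing $\cE^{\rm KS}_{0,1}(\gamma_c)=\cE^{\rm KS}_{c,1}(\gamma_c)+\tfrac{3c}{4}\int_\WS\rho_c^{4/3}\le \cE^{\rm KS}_{0,1}(\gamma_0)+\tfrac{3c}{4}\big(\int_\WS\rho_c^{4/3}-\int_\WS\rho_0^{4/3}\big)$ and using the a priori bound, one gets $\cE^{\rm KS}_{0,1}(\gamma_c)\to E^{\rm KS}(0,1)$, i.e.\ $(\gamma_c)_c$ is a minimising family for $\cE^{\rm KS}_{0,1}$. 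By the uniqueness result of~\cite{CanDelLew-08a} together with a standard compactness argument (weak-$*$ convergence of density matrices, lower semicontinuity of the kinetic energy, compactness of $H^1(\WS)\hookrightarrow L^q(\WS)$), the whole family converges to $\gamma_0$, and in particular $\rho_c\to\rho_0$ strongly in $L^q(\WS)$ for all $1\le q<3$.

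Since $G_\Lat\in L^2(\WS)$, the multiplication operator $W_c:=-NG_\Lat+\rho_c*G_\Lat-c\,\rho_c^{1/3}$ converges to $W_0:=-NG_\Lat+\rho_0*G_\Lat$ in $L^\infty(\WS)+L^2(\WS)$ (the convolution term by Cauchy--Schwarz and $\rho_c\to\rho_0$ in $L^2$, the Dirac term because $\rho_c^{1/3}$ stays bounded in $L^2(\WS)$ while $c\to0$), hence as a relatively $-\Delta$-bounded perturbation with bound tending to $0$. Therefore $H_c\to H_0$ in the norm-resolvent sense, the spectral gap of $H_0$ around $\varepsilon_0$ persists, so ${\rm dist}(\sigma(H_c),\varepsilon_0)>g/4$ for $c<c_1$, and $\1(H_c<\varepsilon_0)\to\1(H_0<\varepsilon_0)$ locally in trace norm, which keeps $\VTr_\Lat\big(\1(H_c<\varepsilon_0)\big)=N$. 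To identify $\gamma_c$, differentiating the energy at the minimiser gives $\VTr_\Lat\big(H_c(\gamma'-\gamma_c)\big)\ge0$ for every admissible $\gamma'$ with $\VTr_\Lat(\gamma')=N$, so $\gamma_c$ minimises the linear problem $\gamma'\mapsto\VTr_\Lat(H_c\gamma')$ under that constraint; the aufbau principle together with the gap around $\varepsilon_0$ carrying exactly $N$ electrons per cell forces $\gamma_c=\1(H_c<\varepsilon_0)$. Feeding this back into an elliptic bootstrap on the self-consistent equation (the Bloch eigenfunctions of $H_c$ below the gap lie in $H^2_{\rm loc}\hookrightarrow C^0$ with bounds uniform in $c$, and converge to those of $H_0$) upgrades $\rho_c\to\rho_0$ to convergence in $C^0(\WS)$, whence $\min_\WS\rho_c>\alpha/2$ for $c<c_1$. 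Finally, \eqref{eq:H-epsF} holds with $\varepsilon_F=\varepsilon_0$: the upper bound is immediate from the uniform relative $-\Delta$-boundedness of $W_c-\varepsilon_0$; for the lower bound one combines $\|(H_c-\varepsilon_0)\psi\|\ge\tfrac g4\|\psi\|$ (the gap) with the Kato estimate $\|(H_c-\varepsilon_0)\psi\|\ge\|\Delta\psi\|-\|(W_c-\varepsilon_0)\psi\|\ge\tfrac12\|\Delta\psi\|-C\|\psi\|$ in a convex combination, which yields $\|(H_c-\varepsilon_0)\psi\|\ge C^{-1}\|(1-\Delta)\psi\|$.

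The delicate point is organising the argument so that it is not circular: the identification $\gamma_c=\1(H_c<\varepsilon_0)$ used for the $C^0$-bootstrap of the density relies on the gap not closing, which itself requires only the weaker $L^2$-convergence of $\rho_c$ obtained by soft compactness, so one must first run the norm-resolvent step with $L^2$-convergence and only afterwards improve to $C^0$. The genuinely analytic content is concentrated there and in the a priori bound; because the $c>0$ functional is non-convex no quantitative stability estimate is available, and everything must be funnelled through compactness plus the $c=0$ uniqueness of~\cite{CanDelLew-08a}.
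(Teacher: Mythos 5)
Your proposal is correct in substance and follows essentially the same strategy as the paper: soft compactness of minimisers as $c\to0$ towards the unique $c=0$ minimiser $\gamma_0$, perturbative stability of the spectral gap of $H_0$, identification of $\gamma_c$ as the spectral projection via the aufbau principle, and finally an upgrade of the convergence of the density to a uniform one to get $\min_\WS\rho_c>\alpha/2$. The local differences are as follows. Where you derive the first-order condition $\VTr_\Lat\big(H_c(\gamma'-\gamma_c)\big)\geq0$ and invoke the aufbau principle directly, the paper cites~\cite{CanDelLew-08a} for the form $\gamma_c=\1(H_c\leq\eps_c)$ and then identifies $\eps_c=\eps_0$; both are fine, but your justification that $\VTr_\Lat\1(H_c<\eps_0)=N$ (``convergence locally in trace norm keeps the trace'') is the thinnest point of your sketch: local trace-norm convergence gives convergence of the trace per unit volume, and you then need that this trace is an integer (the number of Bloch bands below the open gap) to conclude equality. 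The paper's interpolation $H(t)=H_0+t(\rho_c-\rho_0)\ast G_\Lat-tc\,\rho_c^{1/3}$, which keeps the gap open for all $t\in[0,1]$ and uses constancy of the rank of a continuous family of projections, is the cleaner way to make this rigorous, and you should incorporate it (or the equivalent band-counting argument fibre by fibre). For the operator inequality~\eqref{eq:H-epsF} you give a direct proof (gap estimate plus infinitesimal $-\Delta$-boundedness of $W_c$, then squaring and operator monotonicity of the square root), whereas the paper cites~\cite[Lem.~3]{CanDelLew-08a}; your argument is a legitimate self-contained alternative. Finally, for $\min_\WS\rho_c>\alpha/2$ the paper uses the bound $\VTr_\Lat(1-\Delta)\gamma_c(1-\Delta)\leq C$ coming from $\gamma_c=\1(H_c\leq\eps_0)$ and~\eqref{eq:H-epsF} to control $\rho_c$ in $W^{2,1}(\WS)$ and conclude uniform convergence, while you bootstrap the Bloch eigenfunctions in $H^2\hookrightarrow C^0$; this also works, but note that you must make the $H^2$ bounds and the convergence uniform over the Brillouin zone before integrating to get $\rho_c\to\rho_0$ in $C^0(\WS)$, a point your sketch leaves implicit. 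With these two points tightened, your proof is complete and equivalent in strength to the paper's.
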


\begin{proof}
Let $c_n\to0^+$. The energy $c\mapsto E^{\rm KS}(c,1)$ is continuous at $c=0$ and any minimiser $\gamma_{c_n}$ is a minimising sequence for $E^{\rm KS}(0,1)$. Hence it must converge to the unique minimiser $\gamma_0$ weakly and 
$$\lim_{c_n\to0^+}\VTr_{\Lat} ( - \Delta \gamma_{c_n})=\VTr_{\Lat} ( - \Delta \gamma_0).$$
This implies that $\rho_{c_n}\to\rho_0$ strongly in $L^1\cap L^3(\WS)$. Then we write the associated mean-field operator in the form
$$H_{c_n} = - \Delta - N G_{\Lat} + \rho_{c_n} * G_\Lat - c_n \rho_{c_n}^{1/3}=H_0+(\rho_{c_n}-\rho_0) * G_\Lat - c_n \rho_{c_n}^{1/3}$$
and estimate the operator norms of the last two terms by 
$$\norm{(\rho_{c_n}-\rho_0) * G_\Lat(1-\Delta)^{-1}}\leq \norm{\rho_{c_n}-\rho_0}_{L^1(\WS)}\norm{G_\Lat(1-\Delta)^{-1}}\to0$$
and
$$\norm{\rho_{c_n}^{1/3}(1-\Delta)^{-1}}\leq C\norm{\rho_{c_n}}_{L^1(\WS)}^{1/3}.$$
With similar estimates we know that $(H_0+C)(1-\Delta)^{-1}$ and $(H_0+C)^{-1}(1-\Delta)$ are bounded for $C$ large enough, locally uniformly in $c$, see~\cite[Lem.~1]{CanDelLew-08a}. By the Rellich-Kato theorem, this proves that the spectrum of $H_{c_n}$ converges to that of $H_0$. In particular, $H_{c_n}$ has a gap around $\eps_0$, independent of $c_n$ for $c_n$ small enough. To conclude we have therefore shown that there exists $c_1>0$ so that any minimiser $\gamma_c$ for $E^{\rm KS}(c,1)$ has a mean-field operator $H_c$ with the gap $g/4$ around $\eps_0$. This implies~\eqref{eq:H-epsF} by~\cite[Lem.~3]{CanDelLew-08a}.

Let then $\gamma_c$ be any such minimiser for $c<c_1$. Since the family $H(t)=H_0+t(\rho_{c}-\rho_0) * G_\Lat - tc \rho_{c}^{1/3}$ has a gap for all $t\in[0,1]$ and the rank of a continuous family of orthogonal projectors is always constant, we obtain
$$\VTr_\cR\1(H_{c}\leq\eps_0)=N.$$
By~\cite[App.~A]{CanDelLew-08a} we know that $\gamma_{c}=\1(H_{c}\leq\eps_c)$ where $\eps_c$ is the unique Lagrange multiplier chosen such that $\VTr(\gamma_c)=N$, and we conclude that $\eps_c=\eps_0$ is independent of $c$. In particular $\gamma_c=\1(H_{c}\leq\eps_0)$.

Finally, we have 
$$\VTr(C+H_c)\gamma_c(C+H_c)\leq (C+\eps_0)^2N.$$
Since $(H_0+C)(1-\Delta)^{-1}$ and $(H_0+C)^{-1}(1-\Delta)$ are bounded, this shows that $\VTr(1-\Delta)\gamma_c(1-\Delta)\leq C$ uniformly in $c<c_1$. This implies that $\rho_c$ is bounded in $W^{2,1}(\WS)$ and therefore we have $\rho_c\to\rho_0$ in $L^\ii(\WS)$ when $c\to0^+$. In particular $\rho_c\geq \alpha/2>0$ for $c$ small enough. 
\end{proof}

Next we use the properties of minimisers for $c<c_1$ in Lemma~\ref{lem:stability_gap} to show that there is indeed only one, for every $\ell\geq1$. 

\begin{lemma}
There is $c_2 > 0$ so that, for all $0 \le c < c_2$, $E^{\rm KS}(c,1)$ has a unique minimiser $\gamma_c$, satisfying the properties of Lemma~\ref{lem:stability_gap}. This minimiser is also the unique minimiser for $E^{\rm KS}(c,\ell)$ for all $\ell\geq1$, hence there is no symmetry breaking for $c<c_2$. 
\end{lemma}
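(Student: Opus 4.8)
The plan is a quantitative convexity argument. The Kohn--Sham energy is the (convex) $c=0$ reduced Hartree--Fock energy minus the Dirac term; the spectral gap of the $c=0$ insulating crystal makes the convex part \emph{coercive} around its minimiser $\gamma_0$, and for $c$ small the Dirac term is a perturbation of size $O(c)$ that cannot destroy this coercivity. Throughout I would use the fact, recalled from~\cite{CanDelLew-08a}, that at $c=0$ the minimiser $\gamma_0$, the Fermi level $\varepsilon_0$, the gap $g$ and the bound $\alpha=\min_{\WS}\rho_0$ are the same for the $\cR$-periodic and the $(\ell\cR)$-periodic problems, for every $\ell\geq 1$.

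\textbf{Step 1 ($\ell$-uniform stability).} I would first check that Lemma~\ref{lem:stability_gap} holds for every $\ell\geq 1$ with \emph{$\ell$-independent} constants: there is $c_1>0$ such that for $c<c_1$ any minimiser $\gamma$ of $E^{\rm KS}(c,\ell)$ satisfies $\gamma=\1(H_c\leq\varepsilon_0)$ with $\mathrm{dist}(\sigma(H_c),\varepsilon_0)>g/4$, $\rho_\gamma\geq\alpha/2$, $\VTr_{\ell\cR}\big((1-\Delta)\gamma\big)\leq C$, and $C^{-1}(1-\Delta)\leq|H_c-\varepsilon_0|\leq C(1-\Delta)$. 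The proof is the same Rellich--Kato perturbation argument as for $\ell=1$; the only new point is that the $(\ell\cR)$-periodic $c=0$ minimiser is still $\gamma_0$ and that the convergence $\rho_c\to\rho_0$, with its quantitative rate, is uniform in $\ell$. This uniformity is where the insulating hypothesis is genuinely used — it keeps the static dielectric operator bounded below by the identity, hence the screened Coulomb kernel bounded, even though the bare periodic Coulomb potential acquires low-momentum singularities as $\ell\to\infty$ — and it is the main technical obstacle of the whole proof; everything afterwards is elementary.

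\textbf{Step 2 (the comparison).} Fix $c<c_1$, let $\gamma^{(1)}$ be any minimiser of $E^{\rm KS}(c,\ell)$, and let $\gamma^{(0)}:=\gamma_c$ be the $\cR$-periodic minimiser of $E^{\rm KS}(c,1)$; since $\rho_{\gamma_c}$ is $\cR$-periodic, $\gamma_c$ also solves the $(\ell\cR)$-periodic Euler--Lagrange equation with the same $H_c$ and the same $\varepsilon_0$. Writing $P:=\gamma^{(0)}$ and $\delta\rho:=\rho_{\gamma^{(1)}}-\rho_{\gamma^{(0)}}$ and expanding around $\gamma^{(0)}$,
\begin{multline*}
\cE^{\rm KS}_{c,\ell}(\gamma^{(1)})-\cE^{\rm KS}_{c,\ell}(\gamma^{(0)})
=\VTr_{\ell\cR}\big(H_c(\gamma^{(1)}-P)\big)+\tfrac12 D_{\ell\cR}(\delta\rho,\delta\rho)\\
-\tfrac{3c}{4}\int_{\ell\WS}\Big(\rho_{\gamma^{(1)}}^{4/3}-\rho_{\gamma^{(0)}}^{4/3}-\tfrac43\,\rho_{\gamma^{(0)}}^{1/3}\,\delta\rho\Big).
\end{multline*}
The last integrand is $\geq 0$ by convexity of $t\mapsto t^{4/3}$ and, since both densities are $\geq\alpha/2$, it is $\leq\tfrac29(\alpha/2)^{-2/3}(\delta\rho)^2$, so that term is $\geq-\tfrac{c}{6}(\alpha/2)^{-2/3}\|\delta\rho\|^2_{L^2(\ell\WS)}$. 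Because $P=\1(H_c\leq\varepsilon_0)$ minimises $\gamma\mapsto\VTr_{\ell\cR}(H_c\gamma)$ at fixed trace and $|H_c-\varepsilon_0|\geq C^{-1}(1-\Delta)$ with a gap at $\varepsilon_0$, one has $\VTr_{\ell\cR}\big(H_c(\gamma^{(1)}-P)\big)\geq C^{-1}\widetilde Q$ with $\widetilde Q:=\VTr_{\ell\cR}\big((1-\Delta)(1-P)\gamma^{(1)}(1-P)\big)+\VTr_{\ell\cR}\big((1-\Delta)P(1-\gamma^{(1)})P\big)\geq 0$, and $\widetilde Q\leq C$ by Step~1. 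Decomposing $\gamma^{(1)}-P$ into its four blocks relative to $P$ and $1-P$, and estimating the densities of the diagonal (nonnegative) blocks by Hoffmann--Ostenhof and Sobolev and the off-diagonal ones by the pointwise Cauchy--Schwarz inequality together with $\|\rho_{\gamma^{(0)}}\|_{L^\infty}\leq C$, one obtains $\|\delta\rho\|^2_{L^2(\ell\WS)}\leq C\widetilde Q$ with $C$ uniform in $\ell$ and $c<c_1$. Hence
\[
\cE^{\rm KS}_{c,\ell}(\gamma^{(1)})-\cE^{\rm KS}_{c,\ell}(\gamma^{(0)})\ \geq\ \big(C^{-1}-c\,C'\big)\,\widetilde Q\ +\ \tfrac12 D_{\ell\cR}(\delta\rho,\delta\rho).
\]

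\textbf{Step 3 (conclusion).} Set $c_2:=\min\{c_1,\,1/(2CC')\}$. For $c<c_2$ the right-hand side is $\geq 0$, so $E^{\rm KS}(c,\ell)=\cE^{\rm KS}_{c,\ell}(\gamma^{(1)})\geq\cE^{\rm KS}_{c,\ell}(\gamma^{(0)})=\ell^3E^{\rm KS}(c,1)$, which combined with the trivial reverse bound $E^{\rm KS}(c,\ell)\le \ell^3E^{\rm KS}(c,1)$ gives $E^{\rm KS}(c,\ell)=\ell^3E^{\rm KS}(c,1)$: no symmetry breaking. Moreover any minimiser must saturate the displayed inequality, forcing $\widetilde Q=0$, i.e. $(1-P)\gamma^{(1)}(1-P)=0$ and $P(1-\gamma^{(1)})P=0$, whence $\gamma^{(1)}=P=\gamma_c$. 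In particular (taking $\ell=1$) $E^{\rm KS}(c,1)$ has the unique minimiser $\gamma_c$, and $\gamma_c$ is the unique minimiser of $E^{\rm KS}(c,\ell)$ for every $\ell\geq 1$, as claimed.
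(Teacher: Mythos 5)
Your overall architecture (expand the energy around a reference state, use the spectral gap of $H_c$ for coercivity, Taylor-bound the Dirac term and absorb it for small $c$) is the same as the paper's, and your pointwise Cauchy--Schwarz treatment of the off-diagonal blocks $Q^{\pm\mp}$ is a legitimate alternative to the paper's Hilbert--Schmidt duality argument. But there are two genuine gaps. First, your Step~1 --- a version of Lemma~\ref{lem:stability_gap} for minimisers of $E^{\rm KS}(c,\ell)$ with constants \emph{uniform in $\ell$} (gap stability, $\rho_{\gamma^{(1)}}\geq\alpha/2$, kinetic bounds) --- is asserted, not proved; the appeal to ``the same Rellich--Kato argument'' plus a vague invocation of the dielectric response does not give a threshold $c_1$ independent of $\ell$, and proving such uniformity is essentially as hard as the statement itself. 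The paper's proof is structured precisely to avoid this: it compares an \emph{arbitrary} admissible $(\ell\cR)$-periodic competitor $\gamma$ with the $\cR$-periodic minimiser $\gamma_c$, so that only the $\ell=1$ properties of Lemma~\ref{lem:stability_gap} (in particular $\rho_{\gamma_c}\geq\alpha/2$ and $C^{-1}(1-\Delta)\leq|H_c-\varepsilon_0|$) are ever used, and no information about $\ell$-periodic minimisers is needed. Your version needs the lower bound $\rho_{\gamma^{(1)}}\geq\alpha/2$ on the \emph{competitor} to get a pure quadratic bound on the Dirac remainder, which is exactly what forces you into the unproven Step~1.

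Second, the coercivity estimate $\|\delta\rho\|_{L^2(\ell\WS)}^2\leq C\,\widetilde Q$ with $C$ uniform in $\ell$ does not follow from Hoffmann--Ostenhof and Sobolev for the diagonal blocks. For a non-negative block $q=Q^{++}$ or $-Q^{--}$, kinetic energy controls $\int\rho_q^{4/3}$ or $\int\rho_q^{5/3}$ \emph{linearly} (Lieb--Thirring), but $\int\rho_q^{2}$ only through products like $\|\rho_q\|_{L^1}\|\rho_q\|_{L^3}$, i.e.\ quadratically in $\widetilde Q$; to make this linear you invoke ``$\widetilde Q\leq C$ by Step~1'', which is false uniformly in $\ell$ (per supercell these quantities are a priori of order $\ell^3$, and assuming $\gamma^{(1)}$ is close to $\gamma_c$ would be circular). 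This is exactly why the paper bounds the Dirac remainder by $cK\int_{\ell\WS}\min\bigl(\rho_Q^2,\rho_Q^{4/3}\bigr)$, using only $\rho_{\gamma_c}\geq\alpha/2$ and the elementary inequality $(1+t)^{4/3}-1-\tfrac43 t\leq C\min(t^2,t^{4/3})$ for $t\geq-1$: the $L^2$ part is then needed only for the off-diagonal blocks (where your Cauchy--Schwarz argument, or the paper's duality, gives a bound linear in $\VTr_{\ell\cR}Q^2\leq\VTr_{\ell\cR}(Q^{++}-Q^{--})$ by Bach's inequality), while the diagonal blocks are handled through the $4/3$ power and Lieb--Thirring, linearly in $\widetilde Q$ and with constants independent of $\ell$ and of the competitor. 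As written, your Steps~1--2 therefore do not yield a single $c_2$ valid for all $\ell\geq1$; replacing the pure $(\delta\rho)^2$ bound by the $\min(t^2,t^{4/3})$ bound and dropping Step~1 in favour of the comparison with $\gamma_c$ alone is the way to close the argument.
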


\begin{proof}
We use the framework developed in~\cite{BacBarHelSie-99,HaiLewSer-05a,CanDelLew-08a, CanDelLew-08b,FraLewLieSei-12}. Let $\gamma_c$ be any minimiser for $E^{\rm KS}(c,1)$ with $c<c_1$ and let $\gamma$ be any other $(\ell\cR)$-periodic density matrix. Using $\VTr_{\ell\cR}\gamma=\VTr_{\ell\cR}\gamma_c=\ell^3N$, we can rewrite and estimate the difference of the two energies as
\begin{align}
&\cE^{\rm KS}_{c,\ell}(\gamma)-\cE^{\rm KS}_{c,\ell}(\gamma_c)\nn\\
&\qquad=\VTr_{\ell \cR} (H_{c} - \varepsilon_0)Q + \frac12 D_{\ell \Lat}(\rho_{Q}, \rho_{Q})- \frac34 c \int_{\ell \WS} \left( (\rho_c + \rho_{Q})^{\frac43} - \rho_{c}^{\frac43} - \frac43 \rho_{c}^{\frac13} \rho_{Q}   \right)\nn \\
&\qquad\geq \VTr_{\ell \cR} (H_{c} - \varepsilon_0)Q + \frac12 D_{\ell \Lat}(\rho_{Q}, \rho_{Q}) -cK \int_{\ell \WS} \min\left(\rho_{Q}^2\,,\, \rho_{Q}^{\frac43}\right)\label{eq:def:FQc}
\end{align}
where $Q:=\gamma-\gamma_c$. In the second line we have used that 
$$(1 + t)^{4/3} - 1 - \frac43 t \le C \min(t^{4/3},t^2)$$ 
for all $t \ge -1$ and that $\rho_c\geq \alpha/2$. 
Our goal is to show that~\eqref{eq:def:FQc} is non-negative and vanishes only at $\gamma=\gamma_c$. We claim that 
\begin{equation}
\int_{\ell \WS} \min\left(\rho_{Q}^2\,,\, \rho_{Q}^{\frac43}\right)\leq C\VTr_{\ell \cR} (H_{c} - \varepsilon_0)Q.
\label{eq:estim_to_be_proven_for_Q}
\end{equation}
The result then follows under the assumption that $c<c_2 := \min(c_1,(2CK)^{-1})$. 

To prove~\eqref{eq:estim_to_be_proven_for_Q} we introduce
\begin{align*}
    Q^{--}  := \gamma_{c} Q \gamma_{c}, \qquad
    &  Q^{-+}  := \gamma_{c} Q (1 - \gamma_{c}), \\
    Q^{+-} := (1 - \gamma_{c}) Q \gamma_{c}, \qquad
    & Q^{++} :=  (1 - \gamma_{c}) Q (1 - \gamma_{c})
\end{align*}
and note that 
\begin{multline*}
\VTr_{\ell \cR} (H_{c} - \varepsilon_0)Q=\VTr_{\ell \cR} |H_{c} - \varepsilon_0|(Q^{++}-Q^{--})\\
\geq C\VTr_{\ell \cR} (1-\Delta)(Q^{++}-Q^{--})\geq C\VTr_{\ell \cR} (1-\Delta)Q^2. 
\end{multline*}
We have used Bach's inequality $Q^2\leq Q^{++}-Q^{--}$ from~\cite[Eq. (18)--(19)]{BacBarHelSie-99}. 

For $q=Q^{++},Q^{--}$ we use the Lieb-Thirring inequality which implies 
$$\VTr_{\ell\cR}(1-\Delta)q\geq C\int_{\ell\WS}\rho_q+\rho_q^{\frac53}\geq 2C\int_{\ell\WS}\rho_q^{\frac43}$$
and provides the desired bound on the two densities $\rho_{Q^{++}}$ and $\rho_{Q^{--}}$.

For $Q^{+-}$ and $Q^{-+}$ the argument is slightly more involved. Following~\cite[Prop.~1]{CanDelLew-08a} we claim that 
\begin{equation}
 \int_{\ell\WS}\rho_{Q^{+-}}^2+\rho_{Q^{-+}}^2\leq C\,\VTr_{\ell \cR} Q^2
 \label{eq:estim_Q_+-}
\end{equation}
where the constant $C$ is independent of $\ell$. The argument goes by duality in the form
\begin{align*}
\left|\int_{\ell\WS}\rho_{Q^{+-}}V\right| & =\left|\VTr_{\ell\cR}\Big(\gamma_cV(1-\gamma_c)Q\Big)\right|\leq \norm{Q}_{\gS^2(L^2(\ell\WS))}\norm{\gamma_c V}_{\gS^2(L^2(\ell\WS))} \\
& \le \norm{Q}_{\gS^2(L^2(\ell\WS))} \norm{\gamma_c (1 -\Delta )} \norm{(1 - \Delta)^{-1} V}_{\gS^2(L^2(\ell\WS))},
\end{align*}
with $\gS^2(\mathfrak{H})$ the Hilbert-Schmidt norm on a Hilbert space $\mathfrak{H}$. We have
\[
    \norm{\gamma (1 - \Delta) } \le \norm{\gamma (H_c - \ri)} \norm{ (H_c + \ri )^{-1} (1 - \Delta)} \le K,
\]
for a constant $K$ independent of $c$.
We obtain an upper bound involving 
$$\norm{(1-\Delta)^{-1}V}_{\gS^2(L^2(\ell\WS))}^2. $$
We compute this Hilbert-Schmidt norm in the Fourier basis $e_k(x) := \ell^{-3/2} \re^{\ri k \cdot x}$, which gives
\begin{align*}
    \norm{(1-\Delta)^{-1}V}_{\gS^2(L^2(\ell\WS))}^2 &       = \sum_{k_1, k_2 \in \RLat / \ell} | \bra e_{k_1}, (1-\Delta)^{-1}Ve_{k_2} \ket |^2 \\
     & = \sum_{k_1, k_2 \in \RLat / \ell} \dfrac{1}{(1 + | k_1 |^2)^2} \left| \bra e_{k_1}, V e_{k_2} \ket \right|^2\\
     &=\frac1{\ell^3}\sum_{k\in\cR^*/\ell}\frac{1}{(1+|k|^2)^2}\int_{\ell\WS}V^2\\
     &\leq C\int_{\ell\WS}V^2.
\end{align*}
This concludes our sketch of the proof of~\eqref{eq:estim_Q_+-}, hence of~\eqref{eq:estim_to_be_proven_for_Q} and of Theorem~\ref{th:main_periodic}.
\end{proof}

\subsection*{Acknowledgement} This project has received funding from the European Research Council (ERC) under the European Union's Horizon 2020 research and innovation programme (grant agreement MDFT 725528 of M.L.).

\newcommand{\etalchar}[1]{$^{#1}$}


\end{document}